\documentclass{amsart}
\usepackage[utf8]{inputenc}

\usepackage{amssymb,amsmath, amsthm, amsfonts}
\usepackage{mathrsfs}
\usepackage[ruled,linesnumbered]{algorithm2e}

\usepackage[margin=1in]{geometry}
\usepackage{graphicx, subcaption}
\usepackage[dvipsnames]{xcolor}
\usepackage{tikz-cd}
\usepackage[doi=true,isbn=false,url=false,eprint=false,style = alphabetic]{biblatex}
\usepackage{multirow}
\usepackage{nicematrix}

\def\spann{\mathop{\rm span}\nolimits}

\DeclareMathOperator*{\supp}{supp}

\def\trace{\mathop{\rm trace}}

\def\cone{\mathop{\rm cone}}

\def\diag{{\rm diag}}
\def\idx{{\rm idx}}

\newcommand{\bbR}{\mathbb{R}}

\newcommand{\cA}{\mathcal{A}}
\newcommand{\cB}{\mathcal{B}}

\newcommand{\cR}{\mathcal{R}}

\newcommand{\cM}{\mathcal{M}}

\newcommand{\cS}{\mathcal{S}}

\newcommand{\cG}{\mathcal{G}}
\newcommand{\cQ}{\mathcal{Q}}
\newcommand{\cK}{\mathcal{K}}

\newcommand{\Cop}{\mathcal{C}op}
\newcommand{\SPN}{\mathcal{SPN}}
\newcommand{\PSD}{\mathcal{PSD}}
\newcommand{\DNN}{\mathcal{DNN}}
\newcommand{\CP}{\mathcal{CP}}

\newtheorem{theorem}{Theorem}[section]
\newtheorem{corollary}[theorem]{Corollary}
\newtheorem{lemma}[theorem]{Lemma}
\newtheorem{proposition}[theorem]{Proposition}
\newtheorem{definition}[theorem]{Definition}
\theoremstyle{definition}
\newtheorem{example}[theorem]{Example}

\newtheorem{remark}[theorem]{Remark}

\newtheorem{observation}[theorem]{Observation}

\usepackage{hyperref}
\hypersetup{
    colorlinks=true,
    linkcolor = blue,
    citecolor = blue,
    urlcolor = blue
}

\usepackage{spectralsequences} 
\usepackage{tikz}
\usetikzlibrary{decorations.pathreplacing,matrix}
\tikzset{ 
table/.style={
  matrix of math nodes,
  row sep=-\pgflinewidth,
  column sep= 100\pgflinewidth,
  nodes={rectangle,text width=3em,align=center},
  text depth=1.25ex,
  text height=2.5ex,
  nodes in empty cells,
  ampersand replacement=\&
}
}

\usepackage{pdfpages}
\usepackage{enumitem}

\title{Copositive Matrices with Ordered Off-Diagonal Entries}

\author{Grigoriy Blekherman}\address{Grigoriy Blekherman, School of Mathematics, Georgia Institute of Technology}\email{greg@math.gatech.edu}

\author{Santanu S. Dey}\address{Santanu S. Dey, H. Milton Stewart School of Industrial and Systems Engineering, Georgia Institute of Technology}\email{santanu.dey@isye.gatech.edu}

\author{Alex Dunbar}\address{Alex Dunbar, School of Mathematics, Georgia Institute of Technology}\email{adunbar30@gatech.edu}

\author{Burak Kocuk}\address{Burak Kocuk, Industrial Engineering Program, Sabancı University}\email{burakkocuk@sabanciuniv.edu}

\subjclass[2020]{90C20, 90C22, 15B48}
\keywords{Copositive Matrices, SPN Matrices, Standard Quadratic Programs, Threshold Graphs}

\begin{document}

\begin{abstract}
    We study copositive matrices which admit a decomposition into a sum of a positive semidefinite matrix and a matrix with nonnegative entries. Our main result shows that if the off-diagonal entries of a copositive matrix are nondecreasing in rows and in columns, then it admits such a decomposition. We apply this result to study optimization of quadratic forms over the standard simplex. As a corollary, we obtain that a natural relaxation of this problem is tight when the objective function is separable, resolving an open question from~\cite{dey_convexification_2025-1}.
\end{abstract}

\maketitle

\section{Introduction}

A symmetric $n\times n$ matrix $A$ is \emph{copositive} if the quadratic form $x^\top A x$ takes only nonnegative values for $x$ in the nonnegative orthant. The collection of copositive $n\times n$ matrices forms a closed convex cone $\Cop_n$ inside the space of all $n\times n$ symmetric matrices. There has been recent interest in the cone of copositive matrices, as many optimization problems with quadratic objectives can be reformulated as conic optimization problems over the completely positive cone of matries, which are dual to conic optimization problems over the copositive cone \cite{bomze_copositive_2000,burer2009copositive,cifuentes2025sensitivity}. A classical example is the problem of minimizing a quadratic form over the unit simplex, also called a \emph{standard quadratic program}~\cite{bomze1998standard,bomze_copositive_2000,bomze2002solving,bomze2008new,chen2013sparse,saugol2015analysis,bomze2025tighter}, which  has an equivalent copositive conic reformulation. 

Checking copositivity of a matrix is co-NP-complete \cite{murty_np-complete_1987}. A more tractable inner approximation of the cone of copositive matrices is the cone of symmetric matrices which are the sum of a positive semidefinite matrix and a matrix with only nonnegative entries. Denoting this cone by $\SPN_n$, it is a foundational result that $\SPN_n = \Cop_n$ if and only if $n\leq 4$ \cite{diananda1962non}. For $n \geq 5$, there has been interest in determining conditions on copositive matrices which guarantees an SPN decomposition \cite{shaked-monderer_spn_2016,gokmen2022standard}.

We have two main goals in this paper. First, we develop new conditions, depending only on the off-diagonal entries of a copositive matrix, which guarantee the existence of an SPN decomposition. Secondly, we study the implications of such results on standard quadratic programming problems, since a natural approximation for a copositive reformulation is to replace the cone $\Cop_n$ with the cone $\SPN_n$.

Towards our first goal, we present a simple criterion which ensures that a copositive matrix admits an SPN decomposition as follows:
\begin{theorem}[Theorem \ref{thm:Mn_SPN} below]\label{thm:OrderedEntries_Intro}
If $A = (a_{i,j})\in \Cop_n$ and the off-diagonal entries of $A$ are nondecreasing in rows and columns, i.e. $a_{i,j} \leq a_{\ell,k}$ when $i\neq j, \ell\neq k,i \leq \ell,$ and $j \leq k$, then $A \in \SPN_n$.
\end{theorem}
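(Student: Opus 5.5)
The proof goes by induction on $n$. At each step I peel off either the first or the last index, according to the sign of the single entry $a_{1,n}$. The point is that the sign pattern of the off-diagonal entries of an ordered matrix is very rigid: the negative entries form a down-set, so their graph is a threshold graph. For $n=1$ the statement is trivial. The entry $a_{1,n}$ is simultaneously the largest off-diagonal entry of row $1$ and the smallest off-diagonal entry of row $n$. So either every off-diagonal entry of row $1$ is negative, or every off-diagonal entry of row $n$ is nonnegative.

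\emph{Case $a_{1,n}\ge 0$.} Then $a_{n,j}\ge 0$ for all $j\neq n$, and copositivity gives $a_{n,n}\ge 0$. Let $A'$ be the leading $(n-1)\times(n-1)$ principal submatrix of $A$. It is copositive, being a principal submatrix of a copositive matrix, and it is still ordered. By induction $A' = P' + N'$ with $P'$ positive semidefinite and $N'$ entrywise nonnegative. Padding $P'$ by a zero row and column, and putting the whole last row and column of $A$ into the nonnegative part, gives an SPN decomposition of $A$.

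\emph{Case $a_{1,n}<0$.} Then $a_{1,j}<0$ for all $j\ge 2$. Copositivity forces $a_{1,1}>0$: if $a_{1,1}=0$, then $x=e_1+te_2$ with small $t>0$ gives $x^\top A x=2ta_{1,2}+t^2a_{2,2}<0$. Write $a=(a_{1,2},\dots,a_{1,n})^\top$ and perform one step of symmetric Gaussian elimination:
\[
A = \frac{1}{a_{1,1}}\begin{pmatrix} a_{1,1}\\ a\end{pmatrix}\begin{pmatrix} a_{1,1}\\ a\end{pmatrix}^{\top} + \begin{pmatrix} 0 & 0\\ 0 & S\end{pmatrix},\qquad S = A_{[2..n]} - \frac{aa^\top}{a_{1,1}}.
\]
The first term is positive semidefinite, so it suffices to show that $S$ is copositive and ordered; induction then finishes the argument.

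\textbf{Copositivity of $S$.} Fix $y\ge 0$ in $\bbR^{n-1}$ and minimize $x^\top A x$ over $x=(x_1,y)$ with $x_1\in\bbR$. The minimizer is $x_1=-a^\top y/a_{1,1}$, which is $\ge 0$ because $a<0$ entrywise. Hence this minimizer is an admissible nonnegative vector, and the minimum value $y^\top S y$ equals $x^\top A x\ge 0$.

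\textbf{Ordering of $S$.} The off-diagonal entries of $S$ are $s_{i,j}=a_{i,j}-a_{1,i}a_{1,j}/a_{1,1}$. Since $a_{1,2}\le\dots\le a_{1,n}<0$, the absolute values $|a_{1,i}|$ are nonincreasing in $i$. Therefore $a_{1,i}a_{1,j}=|a_{1,i}||a_{1,j}|$ is nonincreasing in each index, and subtracting it preserves the nondecreasing property of $a_{i,j}$ in rows and columns.

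The only delicate points are the sign of the minimizing $x_1$, which is exactly where negativity of the whole first row is used, and monotonicity of the product term. Both hinge on the dichotomy given by the single entry $a_{1,n}$, which I regard as the key observation.
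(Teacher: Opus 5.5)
Your proof is correct and follows the paper's argument: the dichotomy on the sign of $a_{1,n}$ is Lemma~\ref{lem:Uniform_Row}, and deleting the nonnegative last row or Schur-complementing at $a_{1,1}$ (which preserves the ordering) is exactly how the paper verifies the hypotheses of Theorem~\ref{thm:MainTheoremGeneral}. The only difference is that you prove directly the two directions of Lemma~\ref{lem:row_signs} that are needed and start the induction at $n=1$. This makes your argument self-contained, with no need for Diananda's theorem $\Cop_n=\SPN_n$ for $n\le 4$ as a base case.
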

Our proof relies on an inductive argument which applies to a broader class of matrices, described in Theorem~\ref{thm:MainTheoremGeneral}. Building on this inductive framework, we obtain further generalizations, including Theorem~\ref{thm:RelaxedOrdering}, in which the ordering assumptions on the off-diagonal entries are weakened: positive entries are required to be nondecreasing in rows and columns, while nonpositive entries need only satisfy a row-ordering condition determined by the location of the first positive entry in the matrix. As a consequence, we obtain exactness of the SPN relaxation for block-structured matrices in which one block has nonpositive off-diagonal entries, the off-diagonal block is nonpositive with nondecreasing rows, and the remaining block has ordered off-diagonal entries, as well as for matrices whose blocks overlap in a single row and column (Corollary~\ref{ex:BlockSign_SPN} and Proposition~\ref{prop:almost_block_Mn}). The matrices considered in Theorem~\ref{thm:OrderedEntries_Intro} also generate a larger collection of matrices for which copositivity implies the existence of an SPN decomposition, via orbits of a group action. Since copositivity and the existence of an SPN decomposition are both invariant under simultaneous permutation of rows and columns and under rescaling by diagonal matrices with positive diagonal, one can ask when a given symmetric matrix can be brought into the ordered setting by such operations. We show in Propositions~\ref{prop:Rescale_into_Mn} and~\ref{prop:Permutation_into_Mn} that determining the existence of such a diagonal rescaling or permutation, respectively, admits polynomial time algorithms.

Towards our second goal, Theorem \ref{thm:OrderedEntries_Intro} provides a new class of matrices for which the SPN relaxation is tight. In particular, we show in Proposition \ref{prop:SeparableDNNTight} that this relaxation is tight for quadratic programs with \emph{separable} objective functions, answering \cite[Conjecture 1]{dey_convexification_2025-1}. Additionally,
we provide simple proofs of exactness of the relaxation for matrices with minimum entry on the diagonal and matrices which define convex objectives over the simplex, originally proven in  \cite{gokmen2022standard}.

Our results also admit a natural interpretation in the context of existing exactness results for SPN relaxations. A well-known connection links standard quadratic programming to the maximum weighted clique problem: for certain objective functions, standard quadratic programming and its SPN relaxation agree, computing the size of the maximum weighted clique of a perfect graph \cite{gibbons1997continuous, motzkin_maxima_1965, gokmen2022standard}. The conditions on such objective functions, which include relations between diagonal and off-diagonal entries of the objective matrix, imply that the \emph{sign graphs} of optimal solution matrices are perfect graphs, where a sign graph is a graph on $\{1,2,\dots,n\}$ whose edges correspond to entries taking a specified sign.
We show in Proposition~\ref{prop:SignGraphThreshold} that matrices with ordered off-diagonal entries have sign graphs which are \emph{threshold graphs} \cite{chvatal_aggregation_1977}. A threshold graph is a perfect graph which can be obtained by starting with a single vertex and successively adding vertices which are either connected to every other vertex or which are connected to no other vertex. From this perspective, Theorem \ref{thm:OrderedEntries_Intro} extends existing exactness results for SPN relaxations by removing some of the hypotheses on the objective function, provided that the associated sign graph of the optimal solution is threshold.

The remainder of the paper is organized as follows. Section \ref{sec:Background} fixes notation and provides background results on copositive and SPN matrices. In Section \ref{sec:MainTheorem}, we develop results about classes of matrices for which copositivity implies the existence of an SPN decomposition. Section \ref{sec:Orbits} discusses matrices which are in the orbit of the set of matrices with ordered off-diagonal entries. In Section \ref{sec:Optimization}, we apply the results about copositive matrices to certify the tightness of SPN relaxations of standard quadratic programming problems. Finally, in Section \ref{sec:SignGraphs}, we discuss the sign pattern graphs of matrices which are in the orbit of the ordered off-diagonal matrices.  

\section{Notation and Background}\label{sec:Background}

In this section, we fix the notation and provide background results on copositive and SPN matrices. 

\subsection{Notation} 

We denote by $\cS^n$ the set of $n\times n$ symmetric matrices with real entries. Let $\Cop_n \subseteq \cS^n$ denote the cone of $n\times n$ copositive matrices and $\SPN_n\subseteq \cS^n$ denote the cone of $n\times n$ matrices which are the sum of a positive semidefinite matrix and an entrywise nonnegative matrix. If $A \in \SPN_n$ for some $n$, then we say $A$ is SPN or $A$ admits an SPN decomposition.

If $A \in \cS^n$ is an $n\times n$ symmetric matrix, we let $A(i)$ be the $(n-1)\times (n-1)$ submatrix obtained by deleting row $i$ and column $i$. For a vector $v \in \bbR^n$, the notation $v \geq 0$ means that $v$ is entrywise nonnegative. Similarly, for matrices $A$ and $B$, $A\geq 0$ denotes that $A$ has nonnegative entries and $A \geq B$ means that $A-B \geq 0$. If $A$ is positive semidefinite, we write $A \succeq 0$. We denote by $E$ the $n\times n$ matrix with all entries equal to one. Similarly, $e \in \bbR^n$ is the vector with all entries equal to one so that $E = ee^\top$.

If $n$ is a natural number, we denote $[n] = \{1,2,\ldots, n\}$. For $v \in \bbR^n$, the support $\supp v \subseteq [n]$ is the set $\supp v = \{i \in [n]\; \vert \; v_i \neq 0\}$.

\subsection{Helpful Lemmas}

In this section, we collect background results about copositive matrices. 
\begin{lemma}[{\cite[Lemma 2]{diananda1962non}}]\label{lem:diag_elts}
If $A \in \Cop_n$ and $a_{ii} = 0$ for some $i \in [n]$, then $a_{i,j} \geq 0$ for all $j \in [n]$.
\end{lemma}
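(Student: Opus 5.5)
The plan is to test copositivity on a nonnegative vector supported on the two coordinates $i$ and $j$, and to argue by contradiction. Fix $i$ with $a_{ii}=0$ and some $j\neq i$; the case $j=i$ is immediate since $a_{ii}=0\geq 0$. Suppose $a_{i,j}<0$. For $t>0$ consider the vector $x = t e_i + e_j$, which is entrywise nonnegative. Expanding the quadratic form and using the symmetry of $A$ gives
\[
x^\top A x = t^2 a_{ii} + 2t\, a_{i,j} + a_{jj} = 2t\, a_{i,j} + a_{jj}.
\]

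Because $a_{i,j}<0$, this expression is an affine function of $t$ with negative slope. It is therefore strictly negative once $t > a_{jj}/(2|a_{i,j}|)$; if $a_{jj}\leq 0$, any $t>0$ already works. This contradicts the copositivity of $A$, so $a_{i,j}\geq 0$ for every $j$.

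There is no real obstacle in this argument. The only point needing care is that the quadratic term in $t$ vanishes precisely because $a_{ii}=0$. That is what lets the linear term dominate, so the conclusion does not require knowing the sign of $a_{jj}$. One could equally phrase this as a limit: divide by $t$ and let $t\to\infty$ to obtain $2a_{i,j}\geq 0$.
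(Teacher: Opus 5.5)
Your argument is correct. With $a_{ii}=0$, the quadratic form on $x = te_i + e_j$ reduces to $a_{jj} + 2t\,a_{i,j}$, which is negative for large $t$ whenever $a_{i,j}<0$. The paper gives no proof of its own here and simply cites Diananda's Lemma~2; your two-coordinate test-vector argument is the standard proof of that fact.
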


\begin{lemma}[{\cite[Theorem 4]{ping_criteria_1993}}]\label{lem:NegativePSD}
If $A \in \Cop_n$ and all off-diagonal elements of $A$ are nonpositive, then $A$ is positive semidefinite.
\end{lemma}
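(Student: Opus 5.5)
We want to show that a copositive matrix $A$ whose off-diagonal entries are all nonpositive is positive semidefinite. The plan is to argue by contradiction using a minimizer of the quadratic form on the unit sphere. The key observation is that nonpositive off-diagonal entries let us replace any vector $x$ by its entrywise absolute value $|x|=(|x_1|,\dots,|x_n|)$ without increasing the quadratic form.

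\emph{Step 1: passing to $|x|$.} Fix an arbitrary $x\in\bbR^n$ and write
\[
x^\top A x=\sum_i a_{ii}x_i^2+\sum_{i\neq j}a_{ij}x_ix_j .
\]
The diagonal terms are the same for $x$ and $|x|$. For $i\neq j$ we have $a_{ij}\le 0$ and $x_ix_j\le |x_i||x_j|$, so
\[
a_{ij}x_ix_j\ \ge\ a_{ij}|x_i||x_j| .
\]
Summing over all pairs gives
\[
x^\top A x\ \ge\ |x|^\top A|x| .
\]

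\emph{Step 2: applying copositivity.} The vector $|x|$ lies in the nonnegative orthant, so copositivity of $A$ gives $|x|^\top A|x|\ge 0$. Combining this with Step 1 yields $x^\top Ax\ge 0$ for every $x\in\bbR^n$. Hence $A\succeq 0$.

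In the language of the contradiction argument: if $A$ had a negative eigenvalue, the minimum of $x^\top Ax$ on the unit sphere would be negative. By Step 1 it would then also be attained at some nonnegative unit vector, contradicting copositivity.

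There is no real obstacle here. The only point needing care is the sign bookkeeping in the inequality $a_{ij}x_ix_j\ge a_{ij}|x_i||x_j|$, which holds precisely because $a_{ij}\le 0$. This is exactly where the hypothesis on the off-diagonal entries enters.
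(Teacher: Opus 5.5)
Your proof is correct and complete. The paper gives no proof of its own here (it only cites \cite[Theorem 4]{ping_criteria_1993}), but your comparison $x^\top A x \ge |x|^\top A |x|$ is exactly the device the paper uses in the proof of Lemma~\ref{lem:SingularCopositiveNonnegativeKernel}, and Steps 1--2 already give $x^\top A x\ge 0$ directly, so your closing ``contradiction'' paragraph is unnecessary.
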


Matrices with nonpositive off-diagonal entries are studied extensively in \cite{berman_6_1994}. We recall some results here. In what follows, we frequently impose the conditions that a matrix has all nonpositive off diagonals or that a matrix is of the form $sI - B$ for some $s\in \bbR$ and $B \geq 0$. These are called $Z$-matrices and $M$-matrices, respectively in \cite{berman_6_1994,shaked-monderer_spn_2016}.

\begin{lemma}[{cf \cite[Lemma 2.8]{shaked-monderer_spn_2016}}]
If $A$ is a copositive matrix with all nonpositive off-diagonal entries, then $A = sI - B$ for some nonnegative matrix $B$ and $s \in \bbR$ such that $s$ is at least as large as the largest eigenvalue of $B$. 
\end{lemma}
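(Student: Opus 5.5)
The plan is to write $A = sI - B$ with $B \ge 0$ by fixing $s$ large and setting $B := sI - A$. Choose any $s \ge \max_i a_{ii}$. Then the diagonal entries of $B$ are $s - a_{ii} \ge 0$. Its off-diagonal entries are $-a_{ij} \ge 0$ by hypothesis, so $B$ is entrywise nonnegative. The whole content of the lemma is therefore the spectral inequality $s \ge \rho(B)$. By Perron--Frobenius, $\rho(B)$ is the largest eigenvalue of the nonnegative symmetric matrix $B$, so this is the same as showing $sI - B \succeq 0$.

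For that inequality I will invoke Lemma~\ref{lem:NegativePSD} directly. Since $A$ is copositive with nonpositive off-diagonal entries, that lemma gives $A \succeq 0$. Hence $sI - B = A \succeq 0$, so every eigenvalue $\lambda$ of $B$ satisfies $s - \lambda \ge 0$. In particular $s \ge \lambda_{\max}(B)$, and $\lambda_{\max}(B) = \rho(B)$ because $B \ge 0$ is symmetric.

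To be self-contained I would also sketch why Lemma~\ref{lem:NegativePSD} holds in this setting. Take any $x \in \bbR^n$ and let $|x|$ denote its entrywise absolute value. Because the off-diagonal entries $a_{ij}$ are nonpositive,
\[
x^\top A x = \sum_i a_{ii}x_i^2 + \sum_{i\ne j} a_{ij}x_ix_j \;\ge\; \sum_i a_{ii}x_i^2 + \sum_{i\ne j} a_{ij}|x_i||x_j| = |x|^\top A|x| \ge 0,
\]
where the last step uses copositivity. So $A$ is positive semidefinite.

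There is no real obstacle here. The only point needing care is the reduction from ``$s$ dominates the largest eigenvalue of $B$'' to $A \succeq 0$, which is supplied by the absolute-value trick above. If desired, one can take the minimal choice $s = \max_i a_{ii}$ so that the diagonal of $B$ has a zero.
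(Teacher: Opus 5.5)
Your proof is correct. The paper gives no proof of its own here (it defers to \cite[Lemma 2.8]{shaked-monderer_spn_2016}), but your argument is the standard one: take $B = sI - A$ with $s \ge \max_i a_{ii}$, then apply Lemma~\ref{lem:NegativePSD} via the same absolute-value trick the paper uses in proving Lemma~\ref{lem:SingularCopositiveNonnegativeKernel}. The Perron--Frobenius remark is not needed, since $sI - B = A \succeq 0$ already bounds every eigenvalue of the symmetric matrix $B$ by $s$.
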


\begin{lemma}\label{lem:SingularCopositiveNonnegativeKernel}
If $A$ is a singular copositive matrix will all nonpositive off-diagonal entries, then there is a vector $v \in \ker(A)$ such that $v \geq 0$.
\end{lemma}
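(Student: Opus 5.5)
We use the preceding lemma, which writes $A = sI - B$ with $B \geq 0$ and $s \geq \rho(B)$, and then apply Perron--Frobenius theory to $B$. By Lemma~\ref{lem:NegativePSD}, $A$ is positive semidefinite, so its eigenvalues are $s - \lambda$ as $\lambda$ ranges over the eigenvalues of $B$ (all real, since $B$ is symmetric). Since $A$ is PSD we have $s \geq \lambda_{\max}(B)$. For a symmetric nonnegative matrix, $\lambda_{\max}(B)$ equals the spectral radius $\rho(B)$. Singularity of $A$ means $s - \lambda = 0$ for some eigenvalue $\lambda$ of $B$; because every eigenvalue satisfies $\lambda \leq \rho(B) \leq s$, this forces $s = \rho(B)$.

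The Perron--Frobenius theorem for nonnegative matrices, without assuming irreducibility, gives a nonzero vector $v \geq 0$ with $Bv = \rho(B)v$. Then $Av = sv - Bv = (s - \rho(B))v = 0$, so $v \in \ker(A)$ and $v \geq 0$, as required.

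Irreducibility is not needed here, but if one wants to avoid the weak form of Perron--Frobenius there is a direct variational argument. Take any unit eigenvector $u$ of $B$ for $\rho(B)$ and set $v = |u|$, taken entrywise. Then
\[
v^\top B v \geq u^\top B u = \rho(B),
\]
because $B \geq 0$ and $|u_i u_j| \geq u_i u_j$. By Rayleigh's principle $v^\top B v \leq \rho(B)$, so equality holds and $v$ is a maximizer of the Rayleigh quotient. Hence $v$ is an eigenvector for $\rho(B)$, equivalently $v^\top A v = 0$. Since $A \succeq 0$, this gives $Av = 0$.

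The only delicate point is identifying $s$ with $\rho(B)$, which relies on $A$ being PSD; this is supplied by Lemma~\ref{lem:NegativePSD}. In fact the Rayleigh argument alone suffices once $A \succeq 0$ is known. Take a unit $u \in \ker A$. Copositivity and the nonpositive off-diagonal entries give
\[
0 \leq |u|^\top A |u| \leq u^\top A u = 0,
\]
so $|u|^\top A|u| = 0$, and PSD-ness then yields $A|u| = 0$.
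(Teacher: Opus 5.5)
Your proof is correct. Your closing paragraph is essentially the paper's own argument. The paper observes that $A\succeq 0$ (Lemma~\ref{lem:NegativePSD}) with smallest eigenvalue $0$, so a unit kernel vector minimizes $w^\top A w$ over the unit sphere. Replacing $w$ by its entrywise absolute value cannot increase the form, since $a_{i,j}\le 0$ off the diagonal, so a nonnegative minimizer exists. You are slightly more explicit than the paper in showing why that nonnegative minimizer lies in $\ker(A)$: $|u|^\top A|u| = 0$ together with $A\succeq 0$ gives $A|u| = 0$. The paper leaves this step implicit.

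Your lead argument takes a genuinely different route. You write $A = sI - B$ with $B\ge 0$ and use positive semidefiniteness plus singularity to force $s=\rho(B)$. You then take the nonnegative Perron--Frobenius eigenvector of $B$, using the weak form that needs no irreducibility. This is valid, and it places the lemma in $M$-matrix theory, where it is the statement that a singular $M$-matrix has a nonnegative null vector. The cost is an imported theorem plus the bookkeeping of identifying $\lambda_{\max}(B)$ with $\rho(B)$.

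Your ``variational alternative'' to Perron--Frobenius is the same absolute-value trick, applied to $B$ rather than $A$. So the direct argument on $A$ is the shortest and fully self-contained version, which is why the paper uses it.
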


\begin{proof}
Under the hypotheses of the lemma, $A$ is positive semidefinite with nontrivial kernel. So, the smallest eigenvalue of $A$ is 0 and therefore a unit vector $v \in \ker(A)$ is a minimizer of 

\[\min_{w\in \bbR^n} w^\top A w = \sum_{i = 1}^{n} a_{i,i}w_i^2 + 2\sum_{1\leq i < j \leq n}a_{i,j}w_iw_j \text{ s.t. } \|w\| = 1.\]
If $w$ is any unit vector and $\hat{w} = \begin{bmatrix} |w_1| & |w_2| & \ldots & |w_n|\end{bmatrix}^\top$ is the vector obtained by taking entrywise absolute values, then since $a_{i,j}\leq 0$ for all $i,j$ we have 

\[w^\top A w \geq \hat{w}^\top A \hat{w}.\]
So, there is a minimizer with nonnegative entries. 
\end{proof}

\begin{lemma}\label{lem:PSDSingularBlocks}
If $A$ is positive semidefinite with all nonpositive entries on the off-diagonal, then there is a permutation $P$ such that $P^\top A P = \begin{bmatrix} A_1 &0 \\ 0 & A_2\end{bmatrix}$, where $A_1$ is singular and $A_2$ is positive definite. Here we allow $A_1 = A$ or $A_2 = A$. 
\end{lemma}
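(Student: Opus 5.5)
The plan is to decompose $A$ along the connected components of its "nonzero off-diagonal graph" and then sort these components by whether they are singular. Let $G$ be the graph on $[n]$ with an edge $\{i,j\}$ whenever $i\neq j$ and $a_{i,j}\neq 0$ (equivalently $a_{i,j}<0$). Let $C_1,\dots,C_m$ be its connected components. After a simultaneous permutation of rows and columns that lists the components consecutively, $A$ becomes block diagonal $\diag(A[C_1],\dots,A[C_m])$. This works because every entry between distinct components is zero. Each principal submatrix $A[C_k]$ is again positive semidefinite with nonpositive off-diagonal entries, and it is irreducible since $C_k$ is connected in $G$.

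The key step is to show that an irreducible positive semidefinite matrix $B$ with nonpositive off-diagonal entries is either positive definite or has a kernel vector that is strictly positive. I will do this with the argument of Lemma~\ref{lem:SingularCopositiveNonnegativeKernel}. Suppose $B$ is singular, and let $v\in\ker B$ be a unit vector.

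1. Let $\hat v$ be the entrywise absolute value of $v$. As in that proof, $\hat v^\top B\hat v\le v^\top Bv=0$. Since $B\succeq 0$, this forces $\hat v^\top B\hat v=0$, and hence $B\hat v=0$.
2. Suppose some coordinate vanishes, $\hat v_i=0$. The $i$-th entry of $B\hat v$ is $\sum_{j\neq i} b_{i,j}\hat v_j=0$.
3. Every term in that sum is $\le 0$, so $\hat v_j=0$ whenever $b_{i,j}<0$.
4. Therefore the zero set of $\hat v$ is closed under taking neighbours in the connected graph $G$. Since $\hat v\neq 0$, the zero set must be empty, so $\hat v>0$.

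The strict positivity is not actually needed for the conclusion. Only the dichotomy singular/positive definite for each block is needed, and that is immediate.

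To assemble the blocks, let $A_1$ be the direct sum of the singular blocks $A[C_k]$ and $A_2$ the direct sum of the positive definite ones. A direct sum of positive definite matrices is positive definite, so $A_2$ is positive definite. A direct sum that contains at least one singular block is singular, so $A_1$ is singular. Choose $P$ to list the indices of the singular components first and the others second; then $P^\top AP=\diag(A_1,A_2)$. If every component is singular, then $A_1=A$; if none is, then $A_2=A$. Both cases are allowed by the statement.

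The only real content is the component decomposition itself: it must be clear that the zero off-diagonal pattern yields a genuine block-diagonal form under a permutation. Read literally, the statement is almost trivial. One could simply take $A_1=A$ whenever $A$ is singular, so the refined version above is the meaningful one. The main obstacle I expect is making sure the intended meaning matches the likely later use, namely that the singular block has a positive kernel vector on each component. That is why I include the irreducibility argument.
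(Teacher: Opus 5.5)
Your proof is correct, but it takes a different route from the paper's. The paper does not decompose along connected components. If $A$ is singular, it takes a nonnegative kernel vector $v$ (Lemma~\ref{lem:SingularCopositiveNonnegativeKernel}) of inclusion-maximal support and lets $A_1$ be the principal submatrix on $\supp v$. It then gets $A_{12}=0$ from $A_{12}\hat v=0$ with $A_{12}\le 0$ and $\hat v>0$, and gets nonsingularity of $A_2$ from maximality of the support.

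The two constructions produce the same $A_1$. By your step 4, a nonnegative kernel vector restricted to a component is either zero or strictly positive there. Hence the support of a maximal nonnegative kernel vector is exactly the union of the singular components.

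What each route buys:
\begin{itemize}
\item The paper's argument is shorter. It directly hands over a single strictly positive kernel vector $\hat v$ of $A_1$, which is exactly what the proof of Lemma~\ref{lem:row_signs} needs to force $P_{13}=0$. So your suspicion is right: the literal statement is trivial, and this positive kernel vector is the real content.
\item Your argument gives finer structure: each singular block is irreducible with its own positive kernel vector. It also avoids the paper's maximal-support bookkeeping, which tacitly uses the absolute-value trick a second time, to make the maximality contradiction with a kernel vector of $A_2$ legitimate.
\end{itemize}
To match the paper's later use exactly, add one remark: the sum of the positive kernel vectors of the singular components is a strictly positive kernel vector of $A_1$.
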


\begin{proof}
If $A$ is positive definite, then there is nothing to show. Otherwise, let $v \in \ker(A)$ have $v \geq 0$. Let $v$ be a support inclusion maximal vector with this property: if $w \in \ker(A)$ has $\supp v \subseteq \supp w$ then $\supp v = \supp w$.  

If all entries of $v$ are nonzero, then $A_1 = A$ and we are done. Otherwise, let $P$ be a permutation such that $Pv = \begin{bmatrix} \hat{v}^\top & 0^\top \end{bmatrix}$ for some $\hat{v}$ with all nonzero entries. Set $A_1$ to be the submatrix of $A$ consisting of the rows and columns indexed by $\supp v$. Then, $P^\top A P = \begin{bmatrix} A_1 & A_{12}\\ A_{12}^\top & A_2\end{bmatrix}$ and we need to show that $A_{12} = 0$ and $A_{2}$ is nonsingular. Since $A$ is positive semidefinite (and therefore so is $P^\top A P$), we have that 

\[\begin{bmatrix} A_1 & A_{12}\\ A_{12}^\top & A_2\end{bmatrix}\begin{bmatrix} \hat{v} \\ 0 \end{bmatrix} = \begin{bmatrix} A_1 \hat{v}\\ A_{12}\hat{v}\end{bmatrix} = 0.\]
Since $A_{12}$ has all nonpositive entries and since $\hat{v}$ has all positive entries, this implies that $A_{12} = 0$. To conclude, note that if $A_2$ were not positive definite, then there would be $w \in \ker(A_2)$. But then, $\begin{bmatrix}\hat{v} & w \end{bmatrix} \in \ker(P^\top A P)$, contradicting the assumption that $v$ had inclusion maximal support. 
\end{proof}

\begin{lemma}[{cf \cite[Condition $N_{38}$]{berman_6_1994}, \cite[Lemma 2.9]{shaked-monderer_spn_2016}}]\label{lem:MmatrixInverses}
If $A$ is a nonsingular copositive matrix with nonpositive off-diagonal entries such that $A = sI - B$ for some $s\in \bbR, B\geq 0$ then $A^{-1}\geq 0$. If $B\geq A$ also has nonpositive off-diagonal entries, then $A^{-1} \geq B^{-1}$.
\end{lemma}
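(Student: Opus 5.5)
Both parts rest on the Neumann series for $B$, once we know $s$ strictly exceeds the spectral radius $\rho(B)$. By Lemma~\ref{lem:NegativePSD}, $A$ is positive semidefinite, and since it is nonsingular it is positive definite. Write $A = sI - B$ with $B \geq 0$. By Perron--Frobenius, $\rho(B)$ is an eigenvalue of $B$ with a nonnegative eigenvector $u \neq 0$. Then $Au = (s-\rho(B))u$, and positive definiteness gives $s - \rho(B) > 0$. We may assume $s>0$, since otherwise the diagonal of $A$ would be nonpositive, contradicting positive definiteness.

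For the first part, $\rho(B/s) < 1$, so
\[A^{-1} = s^{-1}(I - B/s)^{-1} = s^{-1}\sum_{k\geq 0} (B/s)^k.\]
Every term is entrywise nonnegative, hence $A^{-1} \geq 0$.

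For the second part, I rename the larger matrix $C$ to avoid clashing with the $B$ in $A = sI-B$. So suppose $C \geq A$ entrywise and $C$ has nonpositive off-diagonal entries. First I show that $C$ is nonsingular with $C^{-1}\geq 0$.
\begin{enumerate}
\item Pick $t$ at least as large as every diagonal entry of $A$ and of $C$. Write $A = tI - B'$ and $C = tI - C'$, where $B' = tI - A$ and $C' = tI - C$.
\item Then $B' \geq C' \geq 0$ entrywise: off the diagonal, $-a_{ij} \geq -c_{ij} \geq 0$, and on the diagonal, $t - a_{ii} \geq t - c_{ii} \geq 0$.
\item The spectral radius is monotone on nonnegative matrices, so $\rho(C') \leq \rho(B')$.
\item Applying the argument of the first paragraph to the representation $A = tI - B'$ gives $\rho(B') < t$. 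Hence $\rho(C') < t$.
\item Therefore $C^{-1} = t^{-1}\sum_k (C'/t)^k \geq 0$, and in particular $C$ is nonsingular.
\end{enumerate}
Comparing the two Neumann series term by term, $(B'/t)^k \geq (C'/t)^k \geq 0$ for every $k$, since products of entrywise-ordered nonnegative matrices preserve the order. This gives $A^{-1} \geq C^{-1}$.

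An alternative, cleaner route for the comparison is the identity
\[A^{-1} - C^{-1} = A^{-1}(C - A)C^{-1}.\]
Here all three factors are entrywise nonnegative, so the product is nonnegative.

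The main obstacle is establishing the strict inequality $\rho < s$, and its transfer to $C$, needed for the series to converge. Perron--Frobenius combined with positive definiteness of $A$ handles the first. Monotonicity of the spectral radius under entrywise domination of nonnegative matrices handles the second. I would verify carefully that $C$ inherits nonsingularity in this way and does not need copositivity as an extra hypothesis.
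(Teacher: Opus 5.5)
Your proof is correct. That includes the point the statement leaves implicit: the dominating matrix (your $C$) is automatically nonsingular with $C^{-1}\geq 0$, which is needed for $B^{-1}$ to make sense. You are also right that no copositivity hypothesis on $C$ is required.

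The paper gives no proof of this lemma; it only cites Berman--Plemmons (condition $N_{38}$) and Shaked-Monderer et al. So your argument is a self-contained substitute for those citations rather than a different route from something in the text. It is the standard M-matrix argument:
\begin{itemize}
\item positive definiteness of $A$ (via Lemma~\ref{lem:NegativePSD}) forces $s>\rho(B)$;
\item the Neumann series gives inverse-positivity;
\item monotonicity of the spectral radius transfers $\rho<t$ from $B'$ to $C'$;
\item the identity $A^{-1}-C^{-1}=A^{-1}(C-A)C^{-1}$ gives the comparison, which makes your term-by-term series comparison unnecessary once $C^{-1}\geq 0$ is known.
\end{itemize}

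Two small remarks. First, ``we may assume $s>0$'' is not an assumption but a consequence, and it already follows from $s>\rho(B)\geq 0$. Second, all matrices in the paper's setting are symmetric, so you can avoid Perron--Frobenius altogether. For a symmetric $X\geq 0$, one has $\rho(X)=\max\{x^\top X x : x\geq 0,\ \|x\|=1\}$, by the same absolute-value trick used in the proof of Lemma~\ref{lem:SingularCopositiveNonnegativeKernel}. This gives both $s>\rho(B)$ and the monotonicity $\rho(C')\leq\rho(B')$ directly.
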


The following lemma deals with block matrices whose off-diagonal blocks have constant signs. It is a critical component of what follows, so we include a proof. 

\begin{lemma}[{\cite[Lemma 3.4]{shaked-monderer_spn_2016}}]\label{lem:row_signs}
Suppose that 

\[A = \begin{bmatrix} M & F \\ F^\top & B\end{bmatrix}, \quad M \in \bbR^{r\times r}, F \in \bbR^{r \times (n-r)}, B \in \bbR^{(n-r)\times (n-r)}.\]

\begin{itemize}
\item If $F \geq 0$ (entrywise), then $A \in \Cop_n \iff M \in \Cop_r \text{ and } B \in \Cop_{n-r}$,  and $A \in \mathcal{SPN}_n \iff M \in \SPN_r \text{ and } B \in \mathcal{SPN}_{n-r}$.
\item If $F \leq 0$ and $M$ is an invertible copositive matrix with negative off-diagonal entries, then $A \in \Cop_{n} \iff A/M \in \Cop_{n-r}$ and $A \in \SPN_n \iff A/M \in \SPN_{n-r}$. Here $A/M = B- F^\top M^{-1}F$ is the Schur Complement.
\end{itemize}
\end{lemma}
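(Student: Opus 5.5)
We prove the two bullets separately. In each case the forward direction for copositivity is the easy part, and the SPN equivalences follow from explicit block constructions.

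\emph{First bullet ($F\ge 0$).} If $A\in\Cop_n$, then $M$ and $B$ are copositive, since they are principal submatrices: test $A$ on vectors supported on the corresponding index blocks. Conversely, let $x=(y,z)\ge 0$. Then
\[
x^\top A x = y^\top M y + 2y^\top F z + z^\top B z \ge 0,
\]
because the middle term is nonnegative when $F\ge0$. For SPN, suppose $M=P_1+N_1$ and $B=P_2+N_2$ with $P_i\succeq 0$ and $N_i\ge 0$. Then $A=\diag(P_1,P_2)+\begin{bmatrix}N_1&F\\F^\top&N_2\end{bmatrix}$ is an SPN decomposition. Conversely, a principal submatrix of an SPN matrix is SPN, since principal submatrices of $P$ and $N$ are PSD and nonnegative respectively.

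\emph{Second bullet ($F\le0$).} The key facts are $M^{-1}\ge 0$, which holds by Lemma~\ref{lem:MmatrixInverses} together with the representation $M=sI-B'$, and that $M$ is positive definite, by Lemma~\ref{lem:NegativePSD} and invertibility. Hence $-M^{-1}F\ge 0$. We use the congruence
\[
A=\begin{bmatrix} I&0\\ F^\top M^{-1}& I\end{bmatrix}\begin{bmatrix} M&0\\0&A/M\end{bmatrix}\begin{bmatrix} I&M^{-1}F\\0&I\end{bmatrix},
\]
that is, $x^\top A x=(y+M^{-1}Fz)^\top M(y+M^{-1}Fz)+z^\top (A/M) z$.

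For copositivity, suppose first that $A/M$ is copositive. Since $M\succ0$, the first term is nonnegative, so $x^\top Ax\ge 0$ for all $x\ge0$. Conversely, suppose $A$ is copositive and take $z\ge0$. Set $y=-M^{-1}Fz\ge0$; this choice keeps $x$ nonnegative and makes the first term vanish, so $z^\top(A/M)z=x^\top Ax\ge0$.

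For SPN, suppose $A/M=P+N$. Then
\[
A=\begin{bmatrix} M & F\\ F^\top & F^\top M^{-1}F+P\end{bmatrix}+\begin{bmatrix}0&0\\0&N\end{bmatrix}.
\]
The first matrix is PSD because its Schur complement with respect to $M\succ 0$ is $P\succeq0$, so this is an SPN decomposition of $A$.

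The main obstacle is the converse direction: if $A=P+N$ is SPN, then $A/M$ is SPN. We would partition $P=\begin{bmatrix}P_{11}&P_{12}\\P_{12}^\top&P_{22}\end{bmatrix}$ and $N$ in the same way, so that $M=P_{11}+N_{11}$ and $F=P_{12}+N_{12}$. We would then try to modify the decomposition so that the $(1,1)$ and $(1,2)$ blocks of $N$ vanish. The first step is to observe that $N_{12}$ can be moved into $P$: replacing $P_{12}$ by $F$ and reducing $P$ appropriately keeps $P$ PSD when the off-diagonal signs are favorable. The cleaner route is the minimization characterization
\[
z^\top(A/M)z=\min_y x^\top A x,
\]
combined with the nonnegativity of the minimizer $y=-M^{-1}Fz$. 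We would attempt to show $A/M - (P/P_{11}) \ge 0$ entrywise (suitably interpreted if $P_{11}$ is singular, via generalized Schur complements). The route is to compare $A/M = B - F^\top M^{-1}F$ with $P_{22}-P_{12}^\top P_{11}^{-1}P_{12}$, using $M\ge P_{11}$ on the diagonal and the monotonicity $M^{-1}\le$ (a suitable inverse), both from Lemma~\ref{lem:MmatrixInverses}, together with the signs $F\le 0$ and $N\ge 0$. This comparison step is where we expect the difficulty to lie, and it is likely why the paper cites \cite{shaked-monderer_spn_2016} for the full argument.
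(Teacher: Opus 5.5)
Your proof of the first bullet, and of three of the four implications in the second bullet, is correct and follows the paper. You use the same splitting $A=\begin{bmatrix} M&F\\F^\top&F^\top M^{-1}F\end{bmatrix}+\begin{bmatrix}0&0\\0&A/M\end{bmatrix}$ and the same test vector $y=-M^{-1}Fz\ge 0$. The gap is the implication $A\in\SPN_n\Rightarrow A/M\in\SPN_{n-r}$, which you only outline.

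\textbf{The nonsingular case.} The comparison needs one observation you do not make. If $A=P+N$, then $P_{11}=M-N_{11}\le M$ and $P_{12}=F-N_{12}\le F\le 0$ hold entrywise, in particular off the diagonal and not just on it. This makes $P_{11}$ a positive semidefinite matrix with nonpositive off-diagonal entries. So when $P_{11}$ is nonsingular, Lemma~\ref{lem:MmatrixInverses} gives $P_{11}^{-1}\ge M^{-1}\ge 0$. Combining this with $-P_{12}\ge -F\ge 0$ and $P_{22}\le B$ gives
\[
A/M=B-F^\top M^{-1}F\ \ge\ P_{22}-P_{12}^\top P_{11}^{-1}P_{12}\ \succeq\ 0,
\]
which is an SPN decomposition of $A/M$. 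Your proposed preliminary step of moving $N_{12}$ into $P$ is unjustified and unnecessary.

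\textbf{The singular case.} Here $P_{11}$ is singular, and you need a separate idea; generalized Schur complements do not resolve it. The pseudoinverse of a singular PSD matrix with nonpositive off-diagonal entries need not be entrywise nonnegative. For example, $\begin{bmatrix}1&-1\\-1&1\end{bmatrix}^{\dagger}=\tfrac14\begin{bmatrix}1&-1\\-1&1\end{bmatrix}$, so the inequality $P_{11}^{\dagger}\ge M^{-1}$ you would need fails.

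The paper instead uses Lemma~\ref{lem:PSDSingularBlocks}. After a permutation, $P_{11}=\diag(S,T)$, where $S$ is singular with a kernel vector $v>0$ and $T$ is positive definite. Since $P\succeq 0$ and $v^\top S v=0$, the vector $(v,0,0)$ lies in $\ker P$, so $v^\top$ annihilates the rows of $P_{12}$ in the $S$-block. These rows are $\le 0$ and $v>0$, so they vanish. Then $P\le A$ and the sign hypotheses force the corresponding rows of $F$, and the $S$--$T$ block of $M$, to vanish. Hence $A/M$ equals the Schur complement with respect to the $T$-block of $M$ alone, and the nonsingular argument applies with $T$ in place of $P_{11}$.

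A shorter fix is also available. Apply the nonsingular argument to $A+\epsilon I=(P+\epsilon I)+N$ to get $(A+\epsilon I)/(M+\epsilon I)\in\SPN_{n-r}$. Then let $\epsilon\to 0$, using that $M$ is invertible and that $\SPN_{n-r}$ is closed.
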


\begin{proof}
For the first claim, note that if $A \in \Cop_n$, then $M \in \Cop_r$ and $B \in \Cop_{(n-r)}$. Similarly, if $A \in \SPN_n$, then $M \in \SPN_r$ and $B \in \SPN_{(n-r)}$ since the principal submatrices of copositive matrices are copositive and similar for SPN matrices. For the other direction, note that since $F$ has nonnegative entries, $A$ is copositive (SPN) if both $M$ and $B$ are. 

For the second claim, note that we have the decomposition

\[A = \begin{bmatrix} M & F\\ F & F^\top M^{-1} F \end{bmatrix} + \begin{bmatrix}0 & 0\\ 0 & A/M \end{bmatrix}.\]
Since the first matrix in the sum is positive semidefinite, we have that if $A/M$ is copositive (SPN), then so is $A$. 

To see that copositivity of $A$ implies copositivity of $A/M$, note that for $v \in \bbR^{n-r}$ with $v \geq 0$, we have that $-M^{-1}Fv \geq 0$ by Lemma \ref{lem:MmatrixInverses} and 

\[\begin{bmatrix} -(M^{-1}Fv)^\top & v^\top \end{bmatrix}^\top \begin{bmatrix} M & F\\ F^\top &B\end{bmatrix}\begin{bmatrix} -(M^{-1}Fv) \\ v \end{bmatrix} = v^\top (A/M) v.\]

It remains to show that if $A$ is SPN, then so is $A/M$. If $A$ is SPN, then there is a positive semidefinite matrix $P$ with $P \leq A$. Permuting the rows and columns and applying Lemma \ref{lem:PSDSingularBlocks} if necessary, we have that 

\[P = \begin{bmatrix} P_{11} & 0 & P_{13}\\ 0 & P_{22} & P_{23}\\ P_{13}^\top & P_{23}^\top & P_{33} \end{bmatrix}.\]
Now, if $v\geq 0$ is a zero of $P_{11}$, then $\begin{bmatrix}v^\top & 0^\top&0^\top \end{bmatrix}^\top$ is a zero of $P$. Since $P\leq A$ and since $F \leq 0$, this implies that $P_{13} = 0$. Since $M$ was assumed to have nonpositive off-diagonal entries, it follows that the block structure of $A$ is given by 

\[A = \begin{bmatrix} M_1 & 0 & 0\\ 0 & M_2 & F_2\\ 0 & F_2^\top & B\end{bmatrix}.\]

So, $\displaystyle{A/M = \begin{bmatrix}0 & 0\\ 0 & B - F_2^\top M_2^{-1} F_2\end{bmatrix}}$. Now, $P_{22}^{-1} \geq M_2^{-1} \geq 0$ by Lemma \ref{lem:MmatrixInverses} since $P_{22} \leq M_2$ and both have nonpositive off-diagonal entries. It follows that

\[A/M = B - F_2^\top M_2^{-1}F_2 \geq P_{33} - P_{23}^\top P_{22}^{-1}P_{23} \succeq 0,\]
where the final matrix is positive semidefinite since it is a Schur complement of a positive semidefinite matrix. 
\end{proof}

\begin{lemma}[{\cite[Lemma 3.1]{shaked-monderer_spn_2016}}]\label{lem:COP=SPNwith_PSD_submatrix}
Let $A\in \Cop_n$ have a positive semidefinite submatrix of order $n-1$. Then, $A\in \SPN_n$.
\end{lemma}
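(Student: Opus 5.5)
Write $A=\begin{bmatrix} M & b\\ b^\top & c\end{bmatrix}$, permuting if needed so that the positive semidefinite principal submatrix of order $n-1$ is $M=A(n)$. The plan is to find a nonnegative symmetric matrix $N$, supported on the last row and column, with $A-N\succeq 0$. Concretely, $N$ has last column $(b-b',\,c-c')$ and last row its transpose, for a suitable $b'\le b$ and $c'\le c$. Then $A-N=\begin{bmatrix} M & b'\\ b'^\top & c'\end{bmatrix}$, and we need this matrix to be positive semidefinite. By a generalized Schur complement, this holds if and only if $b'\in\mathrm{range}(M)$ and $c'\ge b'^\top M^{+}b'$. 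So the task reduces to finding $b'\le b$ with $b'\in \mathrm{range}(M)$ and $b'^\top M^+ b'\le c$.

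The natural choice is to take $b'$ as the minimizer of $b'^\top M^+b'$ over $\{b'\le b,\ b'\in\mathrm{range}(M)\}$, and to use duality to relate the optimal value to copositivity of $A$. Writing $b'=My$, the problem becomes minimizing $y^\top M y$ subject to $My\le b$. This is a convex QP whose Lagrangian dual is
\[\max_{x\ge 0}\; -x^\top b-\tfrac14\,x^\top M x .\]
Copositivity of $A$ gives $x^\top Mx+2t\,x^\top b+ct^2\ge 0$ for all $x\ge0$, $t\ge0$. After the scaling $x\mapsto x/2$ and optimization over $t$, this should show the dual value is at most $c$. Strong duality holds for convex QPs whenever the primal is feasible, so the primal optimum is also at most $c$ and is attained, since QP values are attained when finite. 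That yields $b'$, and we set $c'=c$.

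The main obstacle is primal feasibility, namely the existence of $y$ with $My\le b$. If this system is infeasible, Farkas gives $x\ge0$ with $Mx=0$ and $b^\top x<0$. Then $x^\top Mx=0$, and copositivity applied to $(x,t)$ for small $t>0$ gives $2t\,b^\top x+ct^2\ge 0$, which is negative for small $t$. This is a contradiction, so the primal is feasible. The remaining care is in the dual computation. Copositivity directly gives $2t\,x^\top b\ge -x^\top Mx-ct^2$, and optimizing this over $t$ and comparing with the dual objective for each fixed $x$ needs a short AM–GM-type argument, for example setting $t=1$ after the rescaling of $x$. I expect this to go through; with strong duality in hand, the resulting $N\ge0$ and $A-N\succeq0$ give $A\in\SPN_n$.
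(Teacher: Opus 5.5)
The paper gives no proof of this lemma; it is quoted from \cite[Lemma 3.1]{shaked-monderer_spn_2016}. So there is no in-paper argument to compare against, and your proof has to stand on its own. It does.

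\begin{itemize}
\item The reduction to finding $b'\le b$ with $b'\in\mathrm{range}(M)$ and $b'^\top M^{+}b'\le c$ is right, and the Lagrangian dual is computed correctly.
\item The Farkas step correctly rules out primal infeasibility.
\item Strong duality with primal attainment holds because the constraints are affine and $y^\top My\ge 0$ is bounded below (refined Slater plus Frank--Wolfe).
\item The step you hedge on is immediate. With $x=2u$, the inequality $-x^\top b-\tfrac14 x^\top Mx\le c$ is literally $u^\top Mu+2b^\top u+c\ge 0$, which is copositivity tested on $(u,1)$. No optimization over $t$ and no AM--GM argument is needed.
\item You can also avoid the generalized Schur complement. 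For $b'=My^*$,
\[
\begin{bmatrix} M & My^*\\ y^{*\top}M & c\end{bmatrix}=\begin{bmatrix} I\\ y^{*\top}\end{bmatrix}M\begin{bmatrix} I & y^*\end{bmatrix}+\begin{bmatrix} 0 & 0\\ 0 & c-y^{*\top}My^*\end{bmatrix},
\]
and both terms are positive semidefinite.
\end{itemize}

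A somewhat more economical route works directly on your dual side.
\begin{itemize}
\item The convex function $f(u)=u^\top Mu+2b^\top u$ is bounded below by $-c$ on $u\ge 0$, by copositivity. So by Frank--Wolfe it attains its minimum at some $u^*\ge 0$.
\item The KKT conditions give $\mu:=Mu^*+b\ge 0$ with $\mu^\top u^*=0$. Hence $f(u^*)=-u^{*\top}Mu^*\ge -c$.
\item This yields the explicit SPN decomposition
\[
A=\begin{bmatrix} I\\ -u^{*\top}\end{bmatrix}M\begin{bmatrix} I & -u^*\end{bmatrix}+\begin{bmatrix} 0 & \mu\\ \mu^\top & c-u^{*\top}Mu^*\end{bmatrix}.
\]
\end{itemize}
This is the same decomposition yours produces: $y^*=-u^*$ is optimal for your primal problem, and $b-b'=\mu$. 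It needs only existence of a minimizer and first-order optimality, with no Farkas lemma or strong duality. Your formulation, in exchange, shows plainly that copositivity enters exactly as an upper bound $c$ on the dual value.
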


\section{Main Theorem}\label{sec:MainTheorem}

In this section, we prove our main theorem, which states that copositive matrices with ordered rows and columns admit an SPN decomposition. We start in Section \ref{sec:MainTheoremGeneral} by describing a framework of sufficient conditions on a class of matrices for copositivity to be equivalent to the existence of an SPN decomposition. In Section \ref{sec:MainTheoremOrdered}, we specialize to the case of ordered rows and columns. Finally, in Section \ref{sec:MainTheoremBlock}, we describe block-matrix extensions of our results. 

\subsection{General Classes}\label{sec:MainTheoremGeneral}
In this section, we describe a class of matrices for which copositivity implies the existence of an SPN decomposition. 

\begin{theorem}\label{thm:MainTheoremGeneral}
Let $\cA = \bigsqcup_{n \geq 1}\cA_n$ be a disjoint union of sets of matrices $\cA_n \subseteq \cS^n$ such that for each $n \geq 5$,

\begin{enumerate}[label = (\ref{thm:MainTheoremGeneral}.\Roman*), ref = (\ref{thm:MainTheoremGeneral}.\Roman*)]
\item\label{itm:GenTheoremUniformRow} If $A \in \cA_n$, then there is a row $i$ such that all off-diagonal entries in row $i$ are nonnegative or all off-diagonal entries in row $i$ are nonpositive and at least one is nonzero.
\item\label{itm:GenTheoremDeleteNonneg} If $A \in \cA_n$, and row $i$ has all nonnegative off-diagonal entries, $A(i) \in \cA_{n-1}$.
\item\label{itm:GenTheoremSchur} If $A \in \cA_n$ has no rows with all nonnegative entries, then there is a row $i$ with all nonpositive off-diagonal entries and a nonzero diagonal entry such that $A/a_{i,i} \in \cA_{n-1}$. 
\end{enumerate}

If $A \in \cA$ is copositive, then it admits an SPN decomposition. 
\end{theorem}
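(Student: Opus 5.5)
We argue by induction on $n$. For $n\le 4$ the statement follows from Diananda's theorem $\Cop_n=\SPN_n$, regardless of the class $\cA_n$. So fix $n\ge 5$ and a copositive $A\in\cA_n$, and assume the theorem for all copositive matrices in $\cA_{n-1}$. We split according to which alternative of \ref{itm:GenTheoremUniformRow} occurs. The aim is to reduce to a copositive matrix of size $n-1$ that still lies in $\cA$, using the two bullets of Lemma~\ref{lem:row_signs} with $r=1$.

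\emph{Case 1: some row $i$ has all off-diagonal entries nonnegative.} After a simultaneous permutation, which preserves both copositivity and the SPN property, we may put $i$ first and write $A=\begin{bmatrix} a_{ii} & F\\ F^\top & A(i)\end{bmatrix}$ with $F\ge 0$. Strictly speaking, the membership hypotheses refer to $A$ itself. So we should either check that conditions \ref{itm:GenTheoremDeleteNonneg}--\ref{itm:GenTheoremSchur} are used only through the submatrices they name, or apply Lemma~\ref{lem:row_signs} directly to the block structure under the permutation. Either way, the first bullet gives that $A(i)$ is copositive. By \ref{itm:GenTheoremDeleteNonneg}, $A(i)\in\cA_{n-1}$, so induction gives $A(i)\in\SPN_{n-1}$. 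The $1\times 1$ block $a_{ii}\ge 0$ is SPN. The first bullet of Lemma~\ref{lem:row_signs} then yields $A\in\SPN_n$.

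\emph{Case 2: no row has all off-diagonal entries nonnegative.} By \ref{itm:GenTheoremSchur} there is a row $i$ with nonpositive off-diagonal entries and $a_{ii}\neq 0$ such that $A/a_{ii}\in\cA_{n-1}$. Copositivity gives $a_{ii}\ge 0$, hence $a_{ii}>0$. The $1\times1$ block $M=(a_{ii})$ is then an invertible copositive matrix with vacuously negative off-diagonal entries, and $F\le 0$. The second bullet of Lemma~\ref{lem:row_signs} gives that $A/a_{ii}=A(i)-F^\top F/a_{ii}$ is copositive. By induction it is SPN, and the same bullet transfers this back to $A\in\SPN_n$.

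\emph{Case 3: the case of a zero diagonal.} In Case 2 we might worry about a nonpositive row with $a_{ii}=0$. This cannot happen: Lemma~\ref{lem:diag_elts} would force that row to be nonnegative, contradicting the Case 2 hypothesis. This is consistent with \ref{itm:GenTheoremSchur}, which supplies a nonzero diagonal entry anyway.

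The main obstacle is bookkeeping rather than mathematics. One must ensure that the induction applies to matrices produced at each step, which are always in $\cA_{n-1}$ by the hypotheses, down to size $4$, where Diananda's theorem takes over. One must also confirm that Lemma~\ref{lem:row_signs} is legitimately applied after permuting row $i$ to the front. Permutation invariance of $\Cop$ and $\SPN$ makes this harmless, since the conclusions about $A(i)$ and $A/a_{ii}$ do not depend on the ordering. Note also that condition \ref{itm:GenTheoremUniformRow} is only needed to guarantee that one of the two cases occurs.
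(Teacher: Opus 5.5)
Your proposal is correct and follows the paper's proof: induction on $n$ bottoming out at Diananda's theorem, deleting a nonnegative row via the first bullet of Lemma~\ref{lem:row_signs}, and otherwise passing to the Schur complement at a positive diagonal entry via the second bullet. Your version is slightly tidier than the paper's text in one respect. You take the row directly from \ref{itm:GenTheoremSchur}, so that $a_{ii}\neq 0$ together with copositivity gives $a_{ii}>0$ for the very row whose Schur complement lies in $\cA_{n-1}$. The paper instead obtains positivity, via Lemma~\ref{lem:diag_elts}, for a row supplied by \ref{itm:GenTheoremUniformRow}.
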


\begin{proof}
We induct on $n$. Recall that for $n \leq 4$, every copositive matrix admits an SPN decomposition. 

Our base case is $n = 5$. Let $A \in \cA_5 \cap \Cop_5$. If there is a row $i$ such that all off-diagonal entries of row $i$ are nonnegative, then $A(i) \in \cS^{4}$ is copositive and therefore $A(i) \in \SPN_4$. By Lemma \ref{lem:row_signs}, this implies that $A \in \SPN_5$. Otherwise, there is a row $i$ such that all off-diagonal entries of row $i$ are nonpositive and at least one entry is negative. Since $A$ is copositive, this implies that $a_{ii}>0$ by Lemma \ref{lem:diag_elts}. By Lemma \ref{lem:row_signs}, we have that since $A$ is copositive, $A/a_{ii} \in \Cop_4 = \SPN_4$. By Lemma \ref{lem:row_signs} again, this implies that $A \in \SPN_5$. 

Now suppose that $n \geq 6$ and for all $k \leq n$, we have that $\cA_k \cap \Cop_k = \cA_k \cap \SPN_k$. Let $A \in \cA_n$.  If there is a row $i$ such that all off-diagonal entries of row $i$ are nonnegative, then $A(i) \in \cA_{n-1}$ is copositive and therefore $A(i) \in \SPN_{n-1}$. By Lemma \ref{lem:row_signs}, this implies that $A \in \SPN_n$. Otherwise, there is a row $i$ such that all off-diagonal entries of row $i$ are nonpositive and at least one entry is negative. As above, this implies that $a_{ii}>0$. By Lemma \ref{lem:row_signs}, we have that since $A$ is copositive, $A/a_{ii} \in \cA_{n-1}\cap \Cop_{n-1} = \cA_{n-1} \cap \SPN_{n-1}$. By Lemma \ref{lem:row_signs} again, this implies that $A \in \SPN_{n}$. 
\end{proof}

\begin{remark}
Two immediate examples of sets of matrices satisfying Conditions \ref{itm:GenTheoremUniformRow}, \ref{itm:GenTheoremDeleteNonneg}, and \ref{itm:GenTheoremSchur} are the set of nonnegative matrices and the set of matrices with nonpositive off-diagonal elements.
\end{remark}

\begin{example}[All positive off-diagonal entries contained in the last row]

Let $\cA_n\subseteq \cS^n$ be the set of matrices $A = \begin{bmatrix} A_1 & v\\ v^\top & \alpha\end{bmatrix}$, where $A_1 \in \cS^{n-1}$ has nonpositive off-diagonal entries, $v \in \bbR^{n-1},$ and $\alpha \in \bbR$. Note that if $A \in \cA_n$ is copositive, then $A_1$ is copositive. By Lemma \ref{lem:NegativePSD}, $A_1$ is PSD and therefore by Lemma \ref{lem:COP=SPNwith_PSD_submatrix} $A$ is SPN. 

We show that the set $\cA = \bigsqcup_{n \geq 1}\cA_n$ satisfies Conditions \ref{itm:GenTheoremUniformRow}, \ref{itm:GenTheoremDeleteNonneg}, and \ref{itm:GenTheoremSchur}. Let $A \in \cA_n$. If $v \geq 0$, then there is a row with all nonnegative elements. Otherwise, there is an index $i$ such that $v_i < 0$ and therefore the $i^{th}$ row of $A$ has all nonpositive off-diagonal elements. Next, the only possibly nonnegative row of $A$ is the final row $n$. $A(n)$ has only nonpositive off-diagonal entries, and therefore $A(n)\in \cA_{n-1}$. Finally, if $v$ is not nonnegative, then there is an index $i$ such that $v_i < 0$. Let $S = A/a_{ii}$ be the Schur complement, with rows indexed by $2,3,\ldots, n$ for convenience. Note that $s_{i,j} \leq a_{i,j}$ for all $1\leq i,j \leq n$. In particular, if $S = \begin{bmatrix} S_1 & u\\ u^\top & \beta \end{bmatrix}$ where $S_1 \in \cS^{n-2}$, it must be the case that $S_1$ has nonpositive off-diagonal entries and therefore $S \in \cA_{n-1}$. 
\end{example}

We record some immediate facts about sets of matrices satisfying Conditions \ref{itm:GenTheoremUniformRow},  \ref{itm:GenTheoremDeleteNonneg}, and \ref{itm:GenTheoremSchur}.

\begin{proposition}\label{prop:CombineGeneralClasses}
Suppose that $\cA^{(1)},\cA^{(2)},\ldots, \cA^{(k)}$ are sets of matrices satisfying Conditions \ref{itm:GenTheoremUniformRow},  \ref{itm:GenTheoremDeleteNonneg}, and \ref{itm:GenTheoremSchur}. The following sets of matrices also satisfy the conditions:

\begin{enumerate}
\item Block diagonal matrices with blocks in the sets $\cA^{(i)}$,
\item $\bigcup_{i = 1}^{k} \cA^{(i)}$,
\item The class $\cB$, where $A \in \cB$ if and only if $A \in \cA^{(1)}$ or $A$ has rows with nonpositive off-diagonal entries which, when removed, yield a matrix in $\cA^{(1)}$.
\end{enumerate}
\end{proposition}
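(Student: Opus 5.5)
Each of the three constructions is checked directly against Conditions \ref{itm:GenTheoremUniformRow}, \ref{itm:GenTheoremDeleteNonneg}, \ref{itm:GenTheoremSchur}. Two facts are used throughout. First, deleting a row and column, or taking a Schur complement $A/a_{ii}$, acts blockwise on block diagonal matrices. Second, if row $i$ has nonpositive off-diagonal entries and $a_{ii}>0$, then the Schur complement satisfies $s_{jk} = a_{jk} - a_{ji}a_{ik}/a_{ii}$ with $a_{ji}a_{ik}\ge 0$. Hence $s_{jk}\le a_{jk}$, and $s_{jk}=a_{jk}$ whenever $a_{ji}=0$ or $a_{ik}=0$.

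\emph{Part (2), unions.} This is immediate. Each of the three conditions is a statement about a single matrix $A$ and asserts membership of a derived matrix in the class. If $A\in\cA^{(j)}_n$, then the condition for $\cA^{(j)}$ produces the required row, and the derived matrix lies in $\cA^{(j)}_{n-1}\subseteq\bigcup_i\cA^{(i)}_{n-1}$.

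\emph{Part (1), block diagonal matrices.} Let $A=\diag(A_1,\dots,A_m)$ with each $A_\ell$ in some $\cA^{(j_\ell)}$. For \ref{itm:GenTheoremUniformRow}, apply the condition inside one block. A row of $A_\ell$ that is nonnegative (respectively nonpositive with a nonzero entry) stays so in $A$, because the added entries are zeros. Blocks of size $\le 4$ are not covered by the hypotheses. I would handle them either by including all small matrices in the classes, or by observing that a $1\times1$ block gives a row with all off-diagonal entries zero, hence nonnegative. I expect this small-size bookkeeping to be the main annoyance, and would state the convention explicitly. For \ref{itm:GenTheoremDeleteNonneg}, suppose row $i$ of $A$ is nonnegative and lies in block $\ell$. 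Then the corresponding row of $A_\ell$ is nonnegative, so $A_\ell(i)$ is in the class, and $A(i)$ is block diagonal with blocks in the classes. For \ref{itm:GenTheoremSchur}, suppose no row of $A$ is nonnegative. Then no block has a nonnegative row, so by \ref{itm:GenTheoremSchur} a block $A_\ell$ has a suitable row $i$. Since the entries of row $i$ outside the block are zero, $A/a_{ii}=\diag(\dots,A_\ell/a_{ii},\dots)$, which is again in the class.

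\emph{Part (3), the class $\cB$.} Let $A\in\cB$ be obtained from $A'\in\cA^{(1)}$ by adjoining a set $R$ of rows with nonpositive off-diagonal entries; if $R=\emptyset$, use $\cA^{(1)}$ directly. For \ref{itm:GenTheoremUniformRow}, take any row in $R$. If it is entirely zero off the diagonal, it is nonnegative; otherwise it is nonpositive with a nonzero entry. For \ref{itm:GenTheoremDeleteNonneg}, suppose row $i$ is nonnegative. If $i\in R$, then $A(i)$ comes from $A'$ by adjoining $R\setminus\{i\}$, so $A(i)\in\cB$. If $i\notin R$, then row $i$ has entry $0$ in every $R$-column, since those entries are both $\le 0$ and $\ge 0$. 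So row $i$ of $A'$ is nonnegative, $A'(i)\in\cA^{(1)}$, and $A(i)$ is $A'(i)$ with $R$ adjoined.

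For \ref{itm:GenTheoremSchur}, pick $i\in R$ with a negative entry; one exists because otherwise the rows in $R$ are nonnegative. Lemma \ref{lem:diag_elts}, applied in the copositive setting, gives $a_{ii}>0$. I would otherwise simply note that this is the row the condition requires, and I also need $a_{ii}\neq 0$. Entries of $A/a_{ii}$ in rows of $R\setminus\{i\}$ only decrease, so they remain nonpositive. It remains to check that the $A'$-block of $A/a_{ii}$ is in $\cA^{(1)}$, and this is the genuine obstacle, since the Schur complement perturbs $A'$ by the rank-one term $-a_{\cdot i}a_{i\cdot}/a_{ii}$. I would first try to choose $i$ so that the row $a_{i,A'}$ vanishes, in which case the $A'$-block is unchanged. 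If every row of $R$ meets $A'$ nontrivially, I would use the hypothesis that $A$ has no nonnegative row, and if necessary fall back on applying \ref{itm:GenTheoremSchur} for $\cA^{(1)}$ inside $A'$ instead. If neither works, the definition of $\cB$ may need to be read as requiring the removed rows to be compatible in this sense.
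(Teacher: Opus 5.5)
There is no proof of this proposition in the paper; it is recorded as an "immediate fact." I therefore judged your argument on its own terms.

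\textbf{What works.} Parts (1) and (2) are fine, and so are your checks of \ref{itm:GenTheoremUniformRow} and \ref{itm:GenTheoremDeleteNonneg} for $\cB$.

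\textbf{The gap.} The step you call ``the genuine obstacle'' is not a technicality. With $\cB$ defined by \emph{deleting} the adjoined nonpositive rows, as in the statement and in your proposal, Condition \ref{itm:GenTheoremSchur} fails and part (3) is false. Take $\cA^{(1)}=\cM$, which satisfies the three conditions by the proof of Theorem \ref{thm:Mn_SPN}. (To sidestep zero pivots you may use only its copositive members; every matrix below is copositive.) Let $H$ be the Horn matrix displayed in Example \ref{ex:SignPatternsSame}, and let $w=(1,2,3,4,5)^\top$. The off-diagonal entries of $A'=H+2ww^\top$ are $h_{jk}+2jk$. Since $|h_{jk}-h_{jl}|\le 2\le 2j(l-k)$ for $k<l$, the rows are nondecreasing, so $A'\in\cM_5$. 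Explicitly, the rows read $3,7,9,9$; $3,11,17,21$; $7,11,23,31$; $9,17,23,39$; $9,21,31,39$. Now set
\[
A=\begin{bmatrix} 1 & -\sqrt{2}\,w^\top\\ -\sqrt{2}\,w & A'\end{bmatrix}\in\cS^6 .
\]
\begin{itemize}
\item Row $1$ is negative off the diagonal, and deleting it leaves $A'$, so $A\in\cB_6$.
\item Every other row has a negative entry in column $1$ and positive entries elsewhere. Hence $A$ has no nonnegative row, and row $1$ is the only admissible pivot.
\item But $A/a_{11}=A'-2ww^\top=H$. This is not in $\cM_5$ (its first row is $-1,1,1,-1$) and has no row with nonpositive off-diagonal entries, so $H\notin\cB_5$.
\end{itemize}
Both of your fallbacks fail here. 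The adjoined row is strictly negative on the $A'$-columns, and $A'$ is entrywise positive, so no row of $A'$ can serve as a pivot.

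No other argument can succeed either. The identity $x^\top Ax=(x_0-\sqrt{2}\,w^\top y)^2+y^\top Hy$ shows $A\in\Cop_6$. Lemma \ref{lem:row_signs} gives $A\notin\SPN_6$, since $H\notin\SPN_5$. So by Theorem \ref{thm:MainTheoremGeneral}, no class containing $A$ satisfies the three conditions.

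\textbf{How to repair part (3).} Rather than hedging, you should say that the statement has to be modified. One natural repair, matching your ``compatibility'' remark, is to pivot the rows out rather than delete them. That is, put $A\in\cB$ when $A/A_{RR}\in\cA^{(1)}$ for some set $R$ of rows with nonpositive off-diagonal entries and $A_{RR}$ nonsingular. Then:
\begin{itemize}
\item Pivoting on a single $i\in R$ with $a_{ii}>0$ keeps the rows of $R\setminus\{i\}$ nonpositive.
\item The quotient formula $(A/a_{ii})/(A_{RR}/a_{ii})=A/A_{RR}$ then gives $A/a_{ii}\in\cB$.
\item Your argument for \ref{itm:GenTheoremDeleteNonneg} survives. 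A nonnegative row outside $R$ vanishes on the $R$-columns, so it is unchanged by pivoting, and \ref{itm:GenTheoremDeleteNonneg} for $\cA^{(1)}$ applies to $A/A_{RR}$.
\end{itemize}
Alternatively, requiring the adjoined rows to be nonnegative instead of nonpositive also makes (3) true, trivially.

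\textbf{Small sizes in part (1).} Your suggestion to put all small matrices into the classes breaks \ref{itm:GenTheoremUniformRow}. If $X\in\cS^4$ has $x_{12}=x_{23}=x_{34}=x_{14}=1$ and $x_{13}=x_{24}=-1$, then every row of $\diag(X,X)$ has entries of both signs. The right convention is to assume the conditions in every size. This convention is also needed when you apply \ref{itm:GenTheoremDeleteNonneg} to a small $A'$ in part (3). With it, your block argument goes through verbatim.
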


The hypotheses on classes of matrices in Condition \ref{itm:GenTheoremSchur} is recursively defined. In particular, it is difficult to check that a class $\cA$ satisfies these hypotheses. Furthermore, given such a class $\cA$, it may be hard to check membership $A \in \cA$. In what follows, we work with classes $\cA$ which have concrete descriptions in terms of the ordering of matrix entries.  

\subsection{Ordered Off-Diagonal Entries}\label{sec:MainTheoremOrdered}  We now turn to a tractable case of Theorem \ref{thm:MainTheoremGeneral}, in which the off-diagonal entries of a matrix are ordered in rows and columns. For ease of notation, set 

\[\mathcal{M}_n = \{A = (a_{i,j}) \in \cS^n \; \vert \; a_{i,j}\leq a_{k,\ell} \text{ whenever } i\leq k \text{ and } j\leq \ell, \text{ and } i\neq j, k\neq \ell\}.\] 
Our main result is the following theorem:
\begin{theorem}\label{thm:Mn_SPN}
Let $A\in\mathcal{M}_n$. Then $A\in \Cop_n $ if and only if $A \in \mathcal{SPN}_n$.
\end{theorem}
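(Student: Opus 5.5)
The plan is to apply Theorem \ref{thm:MainTheoremGeneral} to the class $\cA_n = \cM_n$. Since $\SPN_n \subseteq \Cop_n$ always holds, only copositive $\Rightarrow$ SPN needs proof, and that theorem delivers it once Conditions \ref{itm:GenTheoremUniformRow}, \ref{itm:GenTheoremDeleteNonneg} and \ref{itm:GenTheoremSchur} are checked for $\cM$. Throughout I use the monotonicity consequence of the definition: in row $i$, the off-diagonal entries $a_{i,j}$ are nondecreasing in $j$ over $j\neq i$, and the same holds for columns.

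For Condition \ref{itm:GenTheoremUniformRow}, look at row $1$. Its off-diagonal entries $a_{1,2},\dots,a_{1,n}$ are the smallest off-diagonal entries of $A$: for any $k\ne\ell$, either $a_{1,\ell}\le a_{k,\ell}$ (if $\ell\neq1$) or $a_{1,k}\le a_{k,1}=a_{k,\ell}$ by symmetry (if $\ell=1$).
\begin{itemize}
\item If $a_{1,2}\ge 0$, every off-diagonal entry is nonnegative, so any row works.
\item Otherwise $a_{1,2}<0$. Let $j$ be the largest index with $a_{1,j}<0$. If $j=n$, row $1$ is nonpositive with a nonzero entry. If $j<n$, row $n$ consists of the largest entries, so $a_{n,k}\ge a_{1,k}$. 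I then argue that either row $n$ is entirely nonnegative, or row $1$ is nonpositive.
\end{itemize}
The cleaner route: if row $n$ has a negative entry $a_{n,k}<0$, then $a_{1,k'}\le a_{n,k}<0$ for $k'\le k$. This does not by itself force row $1$ nonpositive, so instead I work with the sign pattern directly. The negative off-diagonal set is a "staircase" (down-closed in both coordinates). Hence for each $i$ the set $\{j : a_{i,j}<0\}$ is an initial segment, and these segments shrink as $i$ grows. Let $r$ be the largest index with $a_{1,r}\le 0$ (if none exists, row 1 is positive). Then for each $j\le r$, $a_{j,1}\le0$; the relevant row to check is row $r$ or row $1$. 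If $a_{1,n}\le 0$, row $1$ works. Otherwise $a_{1,n}>0$, and then $a_{k,n}\geq a_{1,n}>0$ for all $k$, so column $n$, equivalently row $n$, is positive and row $n$ works. This dichotomy settles Condition \ref{itm:GenTheoremUniformRow}.

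Condition \ref{itm:GenTheoremDeleteNonneg} is immediate: deleting any row $i$ together with column $i$ preserves the ordering among the remaining off-diagonal entries, since the reindexing is order-preserving. So $A(i)\in\cM_{n-1}$ for every $i$.

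Condition \ref{itm:GenTheoremSchur} is the main obstacle. Suppose no row is nonnegative, so $a_{1,n}\le0$ and row $1$ is nonpositive. If $a_{1,1}=0$, copositivity forces row 1 to vanish by Lemma \ref{lem:diag_elts}; this is the nonnegative case, so I may assume $a_{1,1}\neq 0$ (only copositive $A$ matter). The Schur complement is $S=A/a_{1,1}$ with entries $s_{j,k}=a_{j,k}-a_{1,j}a_{1,k}/a_{1,1}$ for $j,k\ge2$. Write $u_j=-a_{1,j}\ge0$. Then $u$ is nonincreasing in $j$, so $u_ju_k$ is nonincreasing in each index. Subtracting $u_ju_k/a_{1,1}$, which for $a_{1,1}>0$ is nonincreasing, from a nondecreasing off-diagonal pattern keeps it nondecreasing. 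Hence $S\in\cM_{n-1}$, at least when $a_{1,1}>0$. The delicate point is $a_{1,1}<0$ for a non-copositive $A$. I would handle this by restricting $\cA_n$ to $\cM_n$ intersected with matrices having nonnegative diagonal; this loses nothing, because copositive matrices have nonnegative diagonal, and the class is stable under deletion and under this Schur complement. The remaining check is that $S$ keeps a nonnegative diagonal, which holds when $A$ is copositive by Lemma \ref{lem:row_signs}. If needed, I would intersect with $\Cop$ as well, which is also stable under both operations.
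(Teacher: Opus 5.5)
Your proposal is correct and follows the paper's proof: you apply Theorem \ref{thm:MainTheoremGeneral} to $\cM$, using a first-row/last-row dichotomy for Condition \ref{itm:GenTheoremUniformRow}, closure of $\cM$ under deleting a row and column, and monotonicity of the off-diagonal entries of $A/a_{1,1}$ (which uses $a_{1,j}\le 0$ and $a_{1,1}>0$). Your final dichotomy, that row $1$ is nonpositive if $a_{1,n}\le 0$ and row $n$ is strictly positive otherwise, is Remark \ref{rem:Uniform_Row_Alternate} in place of Lemma \ref{lem:Uniform_Row}, and the staircase detour before it can be dropped. Of your two fixes for the sign of $a_{1,1}$, only intersecting with $\Cop_n$ gives a class closed under the Schur complement. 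A nonnegative diagonal alone is not preserved: unit diagonal with all off-diagonal entries $-2$ gives diagonal entries $-3$ in $A/a_{1,1}$. Restricting to copositive matrices is exactly what the paper tacitly relies on, since Theorem \ref{thm:MainTheoremGeneral} only invokes Condition \ref{itm:GenTheoremSchur} for copositive $A$.
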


Before proving the theorem, we record the following lemma, which allows us to study the copositivity of matrices in $\mathcal{M}_n$ using Lemma \ref{lem:row_signs}. 

\begin{lemma}\label{lem:Uniform_Row}
Suppose that $A\in \mathcal{M}_n$. Then, either 

\begin{enumerate}
\item $a_{n,j}\geq 0$ for all $1 \leq j \leq n-1$, or 
\item $a_{1,j} < 0$ for all $2\leq j \leq n$. 
\end{enumerate}
\end{lemma}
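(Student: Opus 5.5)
The plan is a direct dichotomy argument that uses only the monotonicity defining $\mathcal{M}_n$. We may assume $n\geq 2$, since otherwise both conditions hold vacuously. We suppose that (2) fails and show that (1) then holds. If (2) fails, there is some $j_0$ with $2\leq j_0\leq n$ and $a_{1,j_0}\geq 0$. The aim is to propagate this single nonnegative entry to every off-diagonal entry in row $n$.

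Fix $j$ with $1\leq j\leq n-1$; we want $a_{n,j}\geq 0$. By symmetry $a_{n,j}=a_{j,n}$, and the index pair $(j,n)$ is off-diagonal. We split into two cases.

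\emph{Case $j=1$.} We compare $(1,j_0)$ with $(1,n)$. The first indices agree and $j_0\leq n$, and both pairs are off-diagonal because $j_0\neq 1$. Hence $a_{1,j_0}\leq a_{1,n}=a_{n,1}$, which gives $a_{n,1}\geq 0$.

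\emph{Case $j\geq 2$.} We would like to compare $(1,j_0)$ directly with $(j,n)$, since $1\leq j$ and $j_0\leq n$, and both pairs are off-diagonal. The monotonicity then gives $0\leq a_{1,j_0}\leq a_{j,n}=a_{n,j}$, as required. This needs no case distinction on whether $j_0\leq j$ or not, because the ordering in $\mathcal{M}_n$ compares $(i,j)\le(k,\ell)$ coordinatewise and does not require the intermediate pairs to avoid the diagonal. The one point to check is that both endpoints really are off-diagonal: $1\neq j_0$ holds since $j_0\geq 2$, and $j\neq n$ holds since $j\leq n-1$. The same direct comparison in fact also covers $j=1$.

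Thus every off-diagonal entry of row $n$ is nonnegative, so (1) holds. The proof is essentially a one-line application of the definition of $\mathcal{M}_n$, and the only step needing care is confirming that both compared index pairs are off-diagonal.
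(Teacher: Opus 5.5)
Your proof is correct and is essentially the paper's argument. The paper simply splits on the sign of the corner entry $a_{n,1}=a_{1,n}$, which by the coordinatewise ordering in $\mathcal{M}_n$ is both the largest off-diagonal entry of row $1$ and the smallest off-diagonal entry of row $n$; you instead argue by contrapositive from an arbitrary nonnegative $a_{1,j_0}$ and compare it directly with each $a_{j,n}$, a purely cosmetic difference that relies on the same one-line use of the definition, including your check that both compared index pairs are off-diagonal.
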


\begin{proof}
If $a_{n,1} \geq 0$, then $a_{n,j}\geq a_{n,1}\geq 0$ for all $1\leq j\leq n-1$ since $A \in \mathcal{M}_n$. Otherwise, $a_{1,n} = a_{n,1} < 0$. Once again, since $A \in \cM_n$, we have $a_{1,j}\leq a_{1,n} < 0$ for all $2 \leq j \leq n$. 
\end{proof}

\begin{remark}\label{rem:Uniform_Row_Alternate}
The similar statement that $A \in \cM_n$ has all nonpositive off-diagonal entries in its first row or strictly positive off-diagonal entries in its last row is also true. The proof is similar to that of Lemma \ref{lem:Uniform_Row}, but the cases are differentiated by $a_{n,1} > 0$ and $a_{n,1}\leq 0$. 
\end{remark}

We are now ready to prove Theorem~\ref{thm:Mn_SPN}.

\begin{proof}[Proof of Theorem~\ref{thm:Mn_SPN}]
For $n \leq 4$, we have that $\Cop_n = \SPN_n$ so that the claim holds. 

Let $\cM = \bigsqcup_{n\geq 1} \cM_n$. We show that $\cM$ satisfies the hypotheses of Theorem \ref{thm:MainTheoremGeneral}. Let $A \in \cM_n$.  Lemma \ref{lem:Uniform_Row} implies that $A$ has a row which either has all negative off-diagonal entries or all nonnegative off-diagonal entries and therefore $\cM_n$ satisfies \ref{itm:GenTheoremUniformRow}. 
For any $1\leq i\leq n$ we have $A(i) \in \cM_{n-1}$ since deleting a row and column from the matrix will not cause entries to become unordered, so $\cM_n$ satisfies \ref{itm:GenTheoremDeleteNonneg}.

For condition \ref{itm:GenTheoremSchur}, note that if there is no row with all nonnegative off-diagonal entries, then $a_{1,j} < 0 $ for all $2 \leq j \leq n$. The Schur complement of $A$ with respect to $a_{11}$ is

\[S:= A/a_{1,1} = A(1) - \frac{1}{a_{1,1}}\begin{bmatrix} a_{1,2} \\ \vdots \\ a_{1,n}\end{bmatrix}\begin{bmatrix} a_{1,2} & \dots & a_{1,n}\end{bmatrix} \in \Cop_{n-1}.\]
For convenience, we index the rows and columns of $S$ by $2,3, \dots, n$. Fix $i,j,k \in \{2,3,\ldots, n\}$ with $k > j$ and $i\neq j,k$. The entries of $S$ are given by 

\[s_{i,j} = a_{i,j} - \frac{1}{a_{1,1}}a_{1,i}a_{1,j}.\]
It follows that 

\[\begin{aligned}
s_{i,k} - s_{i,j} &= a_{i,k} - \frac{1}{a_{1,1}}a_{1,i}a_{1,k} - a_{i,j} + \frac{1}{a_{1,1}}a_{1,i}a_{1,j}\\
&= \left(a_{i,k} - a_{i,j}\right) + \frac{a_{1,i}}{a_{1,1}}(a_{1,j} - a_{1,k})\\
&\geq 0,\\
\end{aligned}\]
where the last inequality follows since $a_{i,k}\geq a_{i,j}$ and $a_{1,i} < 0$. So, $S \in \cM_{n-1}$. The result then follows from Theorem \ref{thm:MainTheoremGeneral} applied to the class $\cM$. \end{proof}

\subsection{Relaxing Ordering Assumptions}\label{sec:MainTheoremBlock}

Here, we extend the result of Theorem \ref{thm:Mn_SPN} to matrices with relaxed ordering constraints. We then use this to develop a block matrix generalization of Theorem \ref{thm:Mn_SPN}. The relaxed ordering depends on the signs of the entries in a row of the matrix. 

\begin{definition}[Positive Index of a Matrix]\label{def:idx}
Let $A = (a_{i,j}) \in \cS^n$. We define the positive index of $A$, denoted $\idx(A)$, to be the first row in which there is a positive off-diagonal entry. That is,  

\[\idx(A) = \min\{i\; \vert \; a_{i,j} > 0\text{ for some } j \in [n], j\neq i\}.\]

If all off-diagonal entries of $A$ are nonpositive, we set $\idx(A) = n+1$. 
\end{definition}

\begin{theorem}[Relaxed Ordering Assumption]\label{thm:RelaxedOrdering}
For $n\geq 1$, let $\cR_n \subseteq \cS^n$ be the set of matrices $M$ satisfying the following two conditions: 

\begin{enumerate}[label = (\ref{thm:RelaxedOrdering}.\Roman*), ref = (\ref{thm:RelaxedOrdering}.\Roman*)]
    \item\label{itm:RelaxedPositive} If $m_{i,j} > 0$, then $m_{i,j}\leq m_{i+1,j}$ and $m_{i,j}\leq m_{i,j+1}$ whenever these entries are defined
    \item\label{itm:RelaxedRowOrdered} If $k = \idx(M)$, then 

    \[m_{i,j} \leq m_{i,j+1} \text{ for all $i \in [n-1]$ and all } \max\{k,i+1\} \leq j \leq n-1.\]
\end{enumerate}

Then, $\cR_n \cap \Cop_n = \cR_n \cap \SPN_n$.
\end{theorem}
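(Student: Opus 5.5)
The plan is to show that the class $\cR=\bigsqcup_{n\geq 1}\cR_n$ satisfies Conditions \ref{itm:GenTheoremUniformRow}, \ref{itm:GenTheoremDeleteNonneg} and \ref{itm:GenTheoremSchur}, and then to invoke Theorem \ref{thm:MainTheoremGeneral}, exactly as in the proof of Theorem~\ref{thm:Mn_SPN}. Two preliminary observations will be used repeatedly. Throughout, \ref{itm:RelaxedPositive} is read as comparing only off-diagonal entries. Let $M\in\cR_n$ and $k=\idx(M)$.
\begin{itemize}
\item By definition of $\idx$, rows $1,\dots,k-1$ have only nonpositive off-diagonal entries.
\item By \ref{itm:RelaxedPositive}, a positive entry propagates downward and to the right along off-diagonal positions. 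That is, the positive off-diagonal entries of $M$ are nondecreasing along rows and columns.
\end{itemize}

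\textbf{Condition \ref{itm:GenTheoremUniformRow}.} Split into cases on $k$.
\begin{itemize}
\item If $k=1$, some $m_{1,j}>0$. Rightward propagation gives $m_{1,n}>0$, so $m_{n,1}>0$ by symmetry. Propagating rightward along row $n$ then gives $0<m_{n,1}\leq m_{n,2}\leq\cdots\leq m_{n,n-1}$, so row $n$ is nonnegative.
\item If $k\geq 2$, row $1$ is nonpositive. Either it contains a negative entry, or it is identically zero and hence nonnegative.
\end{itemize}

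\textbf{Condition \ref{itm:GenTheoremDeleteNonneg}.} Deleting index $r$ creates only one kind of new adjacency, such as $m_{a,r-1}$ next to $m_{a,r+1}$, and similarly in columns. This is compatible with the orderings:
\begin{itemize}
\item For \ref{itm:RelaxedPositive}, if $m_{a,r-1}>0$ then $m_{a,r-1}\leq m_{a,r}\leq m_{a,r+1}$ with all entries positive, since $a\neq r$.
\item For \ref{itm:RelaxedRowOrdered}, the row orderings hold on a contiguous range of columns, so transitivity gives the same conclusion.
\end{itemize}
The delicate point is that $\idx$ and the labels shift after the deletion. One must check that the new index, expressed in the new labels, is never smaller than the old threshold $k$ transported to the new labels. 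This holds because:
\begin{itemize}
\item if $r<k$, then the deleted nonnegative row is in fact identically zero, and the first positive row just moves up by one label;
\item if $r\geq k$, then $\idx$ can only stay the same or increase.
\end{itemize}
So the column range demanded by \ref{itm:RelaxedRowOrdered} for $M(r)$ is contained in the range already guaranteed for $M$.

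\textbf{Condition \ref{itm:GenTheoremSchur}.} If no row is nonnegative, the case analysis above forces $k\geq 2$, with row $1$ nonpositive and containing a negative entry. In the proof of Theorem \ref{thm:MainTheoremGeneral} this condition is only used for copositive matrices, so Lemma \ref{lem:diag_elts} gives $m_{1,1}>0$. Alternatively, one can intersect $\cR_n$ with the set of matrices for which this holds; the intersection does not affect the induction. Take $S=M/m_{1,1}$, indexed by $2,\dots,n$, so that
\[s_{i,j}=m_{i,j}-\frac{m_{1,i}m_{1,j}}{m_{1,1}}\leq m_{i,j}.\]
The key consequences are:
\begin{itemize}
\item Every positive entry of $S$ is already positive in $M$. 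Hence such an entry has $i,j\geq k$, and $\idx(S)\geq k$.
\item By \ref{itm:RelaxedRowOrdered} applied to row $1$ of $M$, we have $m_{1,j}\leq m_{1,j+1}$ for all $j\geq k$.
\end{itemize}
For columns $j\geq k$ we then compute, exactly as in the proof of Theorem~\ref{thm:Mn_SPN},
\[s_{i,j+1}-s_{i,j}=(m_{i,j+1}-m_{i,j})+\frac{m_{1,i}}{m_{1,1}}\,(m_{1,j}-m_{1,j+1})\geq 0.\]
The first term is nonnegative by \ref{itm:RelaxedPositive} when $m_{i,j}>0$, and by \ref{itm:RelaxedRowOrdered} in the range $j\geq\max\{k,i+1\}$. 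The second term is nonnegative because $m_{1,i}\leq 0$ and $m_{1,j}\leq m_{1,j+1}$.
\begin{itemize}
\item This gives the row ordering \ref{itm:RelaxedRowOrdered} for $S$, since its required range $j\geq\max\{\idx(S),i+1\}$ lies inside $j\geq k$.
\item It also gives the row part of \ref{itm:RelaxedPositive} for $S$.
\item The column part of \ref{itm:RelaxedPositive} follows by symmetry of $S$, using the same computation with $m_{1,i}\leq m_{1,i+1}$ for $i\geq k$.
\end{itemize}
Therefore $S\in\cR_{n-1}$, and Theorem \ref{thm:MainTheoremGeneral} yields $\cR_n\cap\Cop_n=\cR_n\cap\SPN_n$.

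The main obstacle is bookkeeping rather than any single inequality. In the deletion step, one must track $\idx$ and the column ranges under relabeling. In the Schur step, one must check that every comparison needed for $S$ falls within the range $j\geq k$ where row $1$ of $M$ is ordered; this is what makes the second term of the displayed difference nonnegative.
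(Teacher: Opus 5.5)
Your proposal is correct and follows essentially the same route as the paper. You verify Conditions \ref{itm:GenTheoremUniformRow}--\ref{itm:GenTheoremSchur} for $\cR$ (row $1$ is nonpositive or row $n$ is nonnegative, deletion preserves the orderings, and the Schur complement at $m_{1,1}$ lies in $\cR_{n-1}$ by the same difference computation) and then invoke Theorem~\ref{thm:MainTheoremGeneral}; your bookkeeping is slightly more careful than the paper's, since you obtain $m_{1,1}>0$ from copositivity, track $\idx$ under relabeling after deletion, and use \ref{itm:RelaxedPositive} together with $j\geq k$ for positive entries below the diagonal.
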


The ordering assumptions are illustrated in Figure \ref{fig:RelaxedOrderStructure}.

\begin{figure}
\begin{center}
\begin{tikzpicture}
    \draw[very thick] (0,0) rectangle (4,4);
    \draw (0,3) rectangle (1,4);
    \draw[ultra thick] (1,0) --(1,1)--(2,1)--(2,2)--(3,2)--(3,3)--(4,3);

    \draw[dashed] (0,4)--(4,0);

    \fill (1,1) circle (2pt);
    \fill (2,2) circle (2pt);
    \fill (3,3) circle (2pt);

    \draw[->] (1.2,3.7) -- (3.8,3.7);
    \draw[->] (1.2,3.3) -- (3.8,3.3);
    
    \draw[->] (0.3,2.8)--(0.3,0.2);
    \draw[->] (0.7,2.8) -- (0.7, 0.2);
    
    \draw[->] (1.4,2.8)--(2.8,2.8);
    \draw[->] (1.9,2.3)--(2.8,2.3);

    \draw[->] (1.2,2.6)--(1.2,1.2);
    \draw[->] (1.7,2.1)--(1.7,1.2);

    \draw[->] (1.2, 0.7)--(3.8,0.7);
    \draw[->] (1.2, 0.7)--(1.2,0.2);

    \draw[->] (2.2,1.3)--(3.8,1.3);
    \draw[->] (2.2,1.3)--(2.2,0.2);

    \draw[->] (2.7,1.8)--(3.8,1.8);
    \draw[->] (2.7,1.8)--(2.7,0.2);
    
    \draw[->] (3.3, 2.8) -- (3.8, 2.8);
    \draw[->] (3.3,2.8) -- (3.3,0.2);

    \node[right] at (4,3) {$\idx(M)$};  
\end{tikzpicture}
\caption{The ordering of off-diagonal elements in Theorem \ref{thm:RelaxedOrdering}. Off-diagonal entries are required to be nondecreasing in the directions of the arrows. The top left block has only nonpositive off-diagonal entries and no restrictions on the ordering of elements. The bottom right entries beneath the bold staircase are required to be positive.}\label{fig:RelaxedOrderStructure}
\end{center}
\end{figure}

\begin{proof}
We apply Theorem \ref{sec:MainTheoremGeneral} to the class $\cR = \bigsqcup_{n \geq 1}\cR_n$. 

We first show that if $M = (m_{i,j}) \in \cR_n$, then $M$ either has all nonpositive off-diagonal entries in its first row or all nonnegative off-diagonal entries in its last row. If the first row has all nonpositive off-diagonal entries, then we are done. Otherwise, $m_{1,j}>0$ for some $j > 1$. It follows that $m_{1,n} \geq m_{1,j}$ by Condition \ref{itm:RelaxedPositive}. We then have that $0\leq m_{1,n} \leq m_{2,n}\leq \dots \leq m_{n-1,n}$ by Condition \ref{itm:RelaxedPositive} again. So, the last row has all nonnegative off-diagonal entries. So, Condition \ref{itm:GenTheoremUniformRow} is satisfied. 

Since deleting a row and column with nonnegative entries does not affect ordering or signs of the entries in other rows, if $M \in \cR_n$ and row $i$ has all nonnegative entries, then $M(i) \in \cR_{n-1}$. So, Condition \ref{itm:GenTheoremDeleteNonneg} is satisfied. 

Finally, we need to show that if there are no rows with all nonnegative entries, then there is a row $i$ such that the Schur complement $M/m_{i,i} \in \cR_{n-1}$. We show this for $i = 1$. Partition $M$ as 

\[M = \begin{bmatrix} \alpha & v^\top\\ v & M_1\end{bmatrix},\]
where $v = \begin{bmatrix} m_{1,2} & m_{1,3} & \dots & m_{1,n}\end{bmatrix}^\top\leq 0$. We are interested in $S = M/m_{1,1} = M_1 - \frac{1}{\alpha} vv^\top$. The off-diagonal entries of $S$ are 

\[s_{i,j} = m_{i+1,j+1} - \frac{m_{1,i+1}m_{1,j+1}}{\alpha}.\]
Note that $s_{i,j}\leq m_{i+1,j+1}$. 

We show that $S$ satisfies Condition \ref{itm:RelaxedPositive}. Suppose that $s_{i,j} > 0$. Then, 

\[\begin{aligned}
    s_{i,j+1} - s_{i,j} &= \left(m_{i+1,j+2} - m_{i+1,j+1}\right) - \left(\frac{m_{1,i+1}m_{1,j+2}}{\alpha} - \frac{m_{1,i+1}m_{1,j+1}}{\alpha}\right)\\
    &= \left(m_{i+1,j+2} - m_{i+1,j+1}\right) - \frac{m_{1,i+1}}{\alpha}\left(m_{1,j+2} - m_{1,j+1}\right).
\end{aligned}\]
Since $s_{i,j} > 0$, we have that $m_{i+1,j+1} > 0$ and therefore $k = \idx(M) \leq i+1$. So, $m_{i+1,j+2} - m_{i+1,j+1} \geq 0$ and $m_{1,j+2} - m_{1,j+1} \geq 0$ by Condition \ref{itm:RelaxedRowOrdered} applied to $M$. So, $s_{i,j+1} -s_{i,j} \geq 0$.

Similarly, 

\[
\begin{aligned}
s_{i+1,j} - s_{i,j} &= \left(m_{i+2,j+1} - m_{i+1,j+1}\right) - \left(\frac{m_{1,i+2}m_{1,j+1}}{\alpha} - \frac{m_{1,i+1}m_{1,j+1}}{\alpha}\right)\\
    &= \left(m_{i+2,j+1} - m_{i+1,j+1}\right) - \frac{m_{1,j+1}}{\alpha}\left(m_{1,i+2} - m_{1,j+1}\right).
\end{aligned}
\]
As above, $k \leq i+1$ so that $m_{i+2,j+1}-m_{i+1,j+1} \geq 0$ and $m_{1,i+2} - m_{1,i+1}\geq 0$ by Condition \ref{itm:RelaxedRowOrdered} applied to $M$. So, $S$ satisfies Condition \ref{itm:RelaxedPositive}.

Finally, we show that $S$ satisfies Condition \ref{itm:RelaxedRowOrdered}. Let $k_S = \idx(S)$. We need to show that for any $i \in [n-1]$, we have $s_{i,j}\leq s_{i,j+1}$ for $\max\{k_S,i+1\}\leq j \leq n-1$.  Now, 

\[s_{i,j+1} - s_{i,j} = \left(m_{i+1,j+2} - m_{i+1,j+1}\right) - \frac{m_{1,i+1}}{\alpha}\left(m_{1,j+2} - m_{1,j+1}\right).\]

Since $m_{i+1,j+1} \geq s_{i,j}$, we have $k_S \geq k-1$. Therefore, $j \geq k_S$ implies that $j + 1 \geq k$. By Condition \ref{itm:RelaxedRowOrdered} applied to $M$, we have $m_{i+1,j+2} \geq m_{i+1,j+1}$ and $m_{1,j+2} \geq m_{1,j+1}$. So, $s_{i,j+1} - s_{i,j} \geq 0$ and $S$ satisfies Condition \ref{itm:RelaxedRowOrdered}. 

Therefore, $\cR$ satisfies hypotheses \ref{itm:GenTheoremUniformRow}, \ref{itm:GenTheoremDeleteNonneg}, and \ref{itm:GenTheoremSchur}. So, $\cR_n \cap \Cop_n = \cR_n \cap \SPN_n$ for all $n$ by Theorem \ref{thm:MainTheoremGeneral}.
\end{proof} 

Theorem \ref{thm:RelaxedOrdering} has an immediate corollary for block matrices. 

\begin{corollary}    
[Block Sign-pattern]\label{ex:BlockSign_SPN}
Let
\begin{equation}\label{eq:BlockStructure1} M=\begin{bmatrix}
A & B\\
B^\top & C
\end{bmatrix}\in \mathcal S^{k+n},
\end{equation}
where $A\in \mathcal{S}^k$, $B\in \mathbb{R}^{k\times n}$, and $C\in \mathcal{S}^n$. Assume that

\begin{enumerate}
\item $a_{i,j}\leq 0$ for all $i\neq j$,
\item $b_{i,\alpha}\leq 0$ for all $i,\alpha$,
\item for each fixed $i$, the row $(b_{i,1},\dots,b_{i,n})$ is nondecreasing, (not requiring this for $A$)
\item $C\in \mathcal M_n$.
\end{enumerate}
\end{corollary}
\begin{proof}
Such block matrices satisfy the ordering conditions \ref{itm:RelaxedPositive} and \ref{itm:RelaxedRowOrdered}. Indeed, any positive off-diagonal entry of $M$ is in the block $C$, so that $\idx(M) \geq k+1$. Since $C\in \cM_n$, it is row-ordered and column-ordered and therefore Condition \ref{itm:RelaxedPositive} holds. Similarly, Condition \ref{itm:RelaxedRowOrdered} holds since $B$ and $C$ are both row-ordered and $\idx(M) \geq k+1$.
\end{proof}

It follows from Proposition \ref{prop:CombineGeneralClasses} that if $A_1, A_2,\ldots, A_k$ are $n_i \times n_i$ symmetric matrices with $A_i \in \cM_{n_i}$ for each $i \in [k]$, then a block diagonal matrix with blocks $A_1,A_2,\ldots, A_k$ is copositive if and only if each $A_i$ admits an SPN decomposition. A slightly more general statement holds for matrices whose blocks only overlap in a single row and column. 

\begin{proposition}\label{prop:almost_block_Mn}
    Let $A \in \Cop_{n+k+1}$ be a copositive matrix with block structure

    \[A = \begin{bmatrix} A_1 & b_1 & 0\\ b_1^\top & c & b_2^\top\\ 0& b_2 & A_2 \end{bmatrix},\]
    where $A_1 \in \cS^n$, $b_1\in \bbR^n,$ $A_2 \in \cS^k,$ $b_2 \in \bbR^k$, and $c \in \bbR$. If the two submatrices $\begin{bmatrix}A_1 & b_1\\ b_1^\top & c \end{bmatrix}$ and $\begin{bmatrix} c & b_2^\top\\ b_2 & A_2\end{bmatrix}$ are in $\cM_{n+1}$ and $\cM_{k+1}$, respectively, then $A \in \SPN_{n+k+1}$.
\end{proposition}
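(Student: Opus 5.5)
We argue by induction on $n$, the size of $A_1$, with $k$ arbitrary, and reduce either to Theorem~\ref{thm:Mn_SPN} or to a smaller instance of the same shape. Write
\[B_1=\begin{bmatrix}A_1 & b_1\\ b_1^\top & c\end{bmatrix}\in\cM_{n+1},\qquad B_2=\begin{bmatrix}c & b_2^\top\\ b_2 & A_2\end{bmatrix}\in\cM_{k+1}.\]
If $n=0$, then $A=B_2\in\cM_{k+1}\cap\Cop_{k+1}$, so $A\in\SPN_{k+1}$ by Theorem~\ref{thm:Mn_SPN}. The symmetric case $k=0$ is identical.

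\emph{Case 1: $b_1\geq 0$ or $b_2\geq 0$.} Suppose $b_1\geq 0$ and split the indices into $\{1,\dots,n\}$ and the rest. The off-diagonal block of $A$ is then $F=\begin{bmatrix} b_1 & 0\end{bmatrix}\geq 0$. By the first bullet of Lemma~\ref{lem:row_signs}, $A$ is copositive if and only if $A_1$ and $B_2$ are copositive, and $A$ is SPN if and only if $A_1$ and $B_2$ are SPN. Now $A_1\in\cM_n$ because deleting a row and column preserves the ordering, and $B_2\in\cM_{k+1}$. Hence both blocks are SPN by Theorem~\ref{thm:Mn_SPN}, and therefore so is $A$. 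The case $b_2\geq 0$ is symmetric, splitting off $A_2$ instead.

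\emph{Case 2: both $b_1$ and $b_2$ have a negative entry.} The vector $b_1$ is the last column of $B_1$ above the diagonal, and it is nondecreasing down that column. So a negative entry forces $(b_1)_1<0$, that is, the $(n+1,1)$ entry of $B_1$ is negative. Exactly as in Lemma~\ref{lem:Uniform_Row}, the ordering then forces the whole first row of $B_1$ to be negative off the diagonal. In $A$, the first row consists of these negative entries together with zeros in the $A_2$ columns. It is therefore nonpositive with a nonzero entry, so $a_{1,1}>0$ by Lemma~\ref{lem:diag_elts}.

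Apply the second bullet of Lemma~\ref{lem:row_signs} with $M=(a_{1,1})$. It gives that $A$ is copositive if and only if $A/a_{1,1}$ is copositive, and that $A$ is SPN if and only if $A/a_{1,1}$ is SPN. The update $-\frac{1}{a_{1,1}}vv^\top$ is supported on the rows and columns where the first row of $A$ is nonzero, which lie inside the $B_1$ index set. Thus $A/a_{1,1}$ has the same block shape: the zero blocks and $b_2$, $A_2$ are untouched, and $c$ changes only through the $B_1$ Schur complement. The new first block is exactly $B_1/a_{1,1}$, which lies in $\cM_n$ by the computation in the proof of Theorem~\ref{thm:Mn_SPN}. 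The new second block is $\begin{bmatrix} c' & b_2^\top\\ b_2 & A_2\end{bmatrix}$ with $c'=c-(b_1)_1^2/a_{1,1}$. This block still lies in $\cM_{k+1}$, since the ordering condition involves only off-diagonal entries and $c'$ is diagonal.

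So $A/a_{1,1}$ is a copositive matrix of the form in the proposition with $n$ replaced by $n-1$. By induction it is SPN, and hence $A$ is SPN.

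The main point to verify carefully is this last step: that the Schur complement never disturbs $b_2$, the zero blocks, or the membership of the second block in $\cM_{k+1}$. Everything else reduces directly to Lemma~\ref{lem:row_signs} and Theorem~\ref{thm:Mn_SPN}.
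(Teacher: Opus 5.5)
Your proof is correct, but it takes a different route from the paper. The paper's proof is a short reduction to an external splitting lemma, \cite[Lemma 3.10]{shaked-monderer_spn_2016}. That lemma writes a copositive matrix with this overlapping-block pattern as a sum of two copositive matrices supported on the two blocks, with the shared diagonal entry $c$ split as $\gamma+(c-\gamma)$. Since membership in $\cM$ ignores diagonal entries, each summand is a copositive element of $\cM_{n+1}$ or $\cM_{k+1}$, hence SPN by Theorem~\ref{thm:Mn_SPN}.

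You instead rerun the inductive mechanism of Theorem~\ref{thm:MainTheoremGeneral} directly on the block shape. If $b_1\ge 0$, you split off $A_1$ using the first bullet of Lemma~\ref{lem:row_signs}. Otherwise the ordering of $B_1$, with $c$ in the last position, forces a strictly negative first row. The Schur complement at $a_{1,1}$ then touches only the $B_1$ index set: it replaces $B_1$ by $B_1/a_{1,1}\in\cM_n$ and changes the $B_2$ block only in its diagonal entry $c$. All of this checks out. You also correctly use only the two easy directions of Lemma~\ref{lem:row_signs}: copositivity passes to the Schur complement, and SPN lifts back from it.

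What your route buys is self-containedness. It needs only the tools in Section~\ref{sec:Background} plus the Schur-complement computation from the proof of Theorem~\ref{thm:Mn_SPN}, and it avoids the nontrivial decomposition lemma. What the paper's route buys is brevity and generality. It works verbatim for any two classes on which $\Cop=\SPN$ and whose membership ignores the diagonal. Yours exploits the specific structure of $\cM$, in particular that $c$ sits last in $B_1$, so $b_1\not\ge 0$ yields a uniform negative row.

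One minor point: your subcase $b_2\ge 0$ is redundant. Case 2 only uses that $b_1$ has a negative entry, so the dichotomy $b_1\ge 0$ versus $b_1\not\ge 0$ already suffices.
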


\begin{proof}
By \cite[Lemma 3.10]{shaked-monderer_spn_2016}, there are matrices $B_1\in \Cop_{n+1}$ and $B_2\in \Cop_{k+1}$ and $0\leq \gamma\leq c$ such that

\[B_1 + \begin{bmatrix} 0& 0 \\0 &\gamma\end{bmatrix} = \begin{bmatrix}A_1 & b_1\\ b_1^\top & c \end{bmatrix}, \quad B_2 + \begin{bmatrix} c-\gamma & 0\\ 0 & 0 \end{bmatrix} = \begin{bmatrix} c & b_2^\top\\ b_2 & A_2\end{bmatrix}, \quad \text{ and } A = \begin{bmatrix} B_1 & 0\\ 0 & 0\end{bmatrix} + \begin{bmatrix} 0 & 0\\ 0 & B_2 \end{bmatrix}.\]
Since membership in $\cM_{n+1}$ and $\cM_{k+1}$ does not depend on the value of diagonal entries, it follows that $B_1 \in \cM_{n+1}$ and $B_2 \in \cM_{k+1}$. Since $B_1$ and $B_2$ are copositive by hypothesis, they are SPN. So, $A$ is the sum of two SPN matrices and is therefore also SPN.
\end{proof}

\section{Permutations and Rescalings of $\cM_n$}\label{sec:Orbits}

The convex cones $\Cop_n$ and $\SPN_n$ are closed under simultaneous permutation of rows and columns and positive diagonal rescalings. That is, if $A$ is copositive (SPN), then so is $P^\top A P$ for any permutation matrix $P$ and $D A D$ for any diagonal matrix $D$ with positive entries on the diagonal. 
So, it is natural to consider which matrices arise from applying such permutations and rescalings to copositive elements of $\cM_n$. 

\begin{definition}
Let $G_n$ be the group of matrices which are the product of a permutation matrix and a diagonal matrix with positive entries on the diagonal:  
\[
\begin{split}
G_n = \{PD \; \vert \; & P \text{ is an $n\times n$ permutation matrix and } \\ &D \text{ is an } n \times n \text{ diagonal matrix with positive diagonals}\}.
\end{split}
\]
Note that $G_n$ acts on $\cS^n$ via $g\cdot X = g^\top X g$. Set $G_n\cM_n = \{g\cdot X\;\vert g\in G_n\text{ and } X \in \cM_n\}$ to be the orbit of $\cM_n$ under the action of $G_n$ and define $\cK_n$ to be the conical hull of $G_n\cM_n \cap \Cop_n$. 

\[\cK_n = \cone\left(G_n\cM_n \cap \Cop_n\right).\]
\end{definition}

\begin{corollary}\label{cor:GeneratedConeSPN}
The cone $\cK_n$ is contained in $\SPN_n$.
\end{corollary}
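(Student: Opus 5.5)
The plan is to reduce everything to Theorem~\ref{thm:Mn_SPN} by pulling each generator of $\cK_n$ back to $\cM_n$, and then to use that $\SPN_n$ is a convex cone.

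\emph{Step 1: generators.} Take $Y \in G_n\cM_n \cap \Cop_n$, so $Y = g^\top X g$ with $g = PD \in G_n$ and $X \in \cM_n$. Since $g$ is invertible, we can write
\[X = (g^{-1})^\top Y g^{-1}.\]
Here $g^{-1} = D^{-1}P^{-1} = D^{-1}P^\top$. This is a permutation matrix times a positive diagonal matrix, up to reordering the two factors: indeed $D^{-1}P^\top = P^\top (P D^{-1} P^\top)$, and $PD^{-1}P^\top$ is again diagonal with positive entries. Hence $g^{-1} \in G_n$.

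\emph{Step 2: transferring copositivity and SPN.} The action of any $h \in G_n$ preserves $\Cop_n$. If $x \geq 0$, then $hx \geq 0$, so
\[x^\top h^\top Y h x = (hx)^\top Y (hx) \geq 0.\]
The action also preserves $\SPN_n$. If $Y = P_0 + N$ with $P_0 \succeq 0$ and $N \geq 0$, then $h^\top P_0 h \succeq 0$. Moreover $h^\top N h$ is obtained from $N$ by a simultaneous permutation of rows and columns together with positive rescaling, so it is entrywise nonnegative. This is exactly the invariance noted at the start of Section~\ref{sec:Orbits}.

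Applying this with $h = g^{-1}$ shows $X \in \Cop_n \cap \cM_n$. Theorem~\ref{thm:Mn_SPN} then gives $X \in \SPN_n$. Applying the invariance once more with $h = g$ gives $Y = g^\top X g \in \SPN_n$.

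\emph{Step 3: conical hull.} Since $\SPN_n$ is closed under addition and under multiplication by nonnegative scalars, every conic combination of generators lies in $\SPN_n$. Therefore $\cK_n \subseteq \SPN_n$.

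The only point needing care is Step 1, that the orbit is genuinely invertible so copositivity can be pulled back to $X$. It reduces to the observation that $G_n$ is a group, and there is no real obstacle.
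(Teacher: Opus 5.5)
Your proof is correct and follows essentially the same route as the paper: pull each generator back to $\cM_n$ through the invertible $G_n$-action, apply Theorem~\ref{thm:Mn_SPN}, push the SPN decomposition forward, and conclude by convexity of the cone $\SPN_n$. Your explicit verification that $g^{-1} \in G_n$, together with the invariance of $\Cop_n$ and $\SPN_n$ under the action, spells out a step the paper uses only implicitly.
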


\begin{proof}
Let $A \in \cK_n$. Then $A = \sum_{i = 1}^r \lambda_i D_i^\top P_i^\top A_i P_i D_i$, where each $\lambda_i \geq 0$, each $D_i$ is an $n\times n$ diagonal matrix with positive entries on the diagonal, each $P_i$ is an $n\times n$ permutation matrix, and each $A_i \in \cM_n$, and for each $i$, $D_i^\top P_i^\top A_i P_i D_i \in \Cop_n$. Since $\Cop_n$ is invariant under the action of $G_n$, it follows that for each $i$, 

\[(P_iD_i)^{-\top}(D_i^\top P_i^\top)A_i(P_iD_i)(P_iD_i)^{-1} = A_i \in \Cop_n.\]
Since $A_i \in \cM_n$, we have that $A_i \in \SPN_n$. But then, $(D_i^\top P_i^\top)A_i(P_iD_i) \in \SPN_n$ as well for each $i$. So, $A$ is a conical combination of elements of $\SPN_n$ and therefore $A \in \SPN_n$.
\end{proof}

\begin{remark}\label{rem:NonClosedGroup}
We note that the group $G_n$ is not closed since the diagonal matrix $D$ is required to have strictly positive entries. Elements of the boundary of the closure $\mathrm{cl}(G_n\cM_n)$ will have a row and column of zeros.
\end{remark}

We note that the closure of orbits $G_n\cM_n \cap \Cop_n$ does not cover the entirety of the nonnegative cone. 

\begin{example}[SPN matrix which is not an element of $G_n\cM_n$] 
Consider the matrix 

\[M = \begin{bmatrix} 1 & 1 & 0 & 0 & 1\\ 1 & 1 & 1 & 0 & 0\\ 0 & 1 & 1 & 1 & 0\\ 0 & 0 & 1 & 1 & 1\\ 1 & 0 & 0 & 1 & 1\\\end{bmatrix}.\]

The matrix $M$ is entrywise nonnegative and therefore $M \in \SPN_5$. The nonzero off-diagonal entries of $M$ form a $5$-cycle. So, for any $5\times 5$ permutation matrix $P$, the nonzero off-diagonal elements of $P^{\top}MP$ must also form a $5$-cycle. In particular, there must be a row $i \in \{2,\ldots, 5\}$  such that $(P^{\top}MP)_{i,1} = 1$. Since there are exactly 2 nonzero off-diagonal entries in row $i$, it follows that the off-diagonal entries of row $i$ cannot be nondecreasing. Since scaling by a positive diagonal matrix preserves zeros, it follows that the orbit of $M$ under $G_5$ does not intersect $\cM_5$. Moreover, $M$ has no zero rows, so $M$ is not in the closure of $G_5\cM_5$.
\end{example}

However, the conical hull of orbits contains the entirety of the nonnegative cone as well as rank one extreme rays of the positive semidefinite cone with specific sign patterns.

\begin{proposition}[Extreme Rays of $\SPN_n$ contained in $\cK_n$]\label{prop:ExtremeRaysKn}
The following extreme rays of $\SPN_n$ are contained in $\cK_n$:
\begin{enumerate}
    \item The nonnegative matrices $E_{i,j}$ with entries $(i,j)$ and $(j,i)$ equal to $1$ and all other entries zero.
    \item Positive semidefinite matrices $vv^\top$ where $v\in \bbR^n$ has no zero entries and either $v$ or $-v$ has at most one negative entry. 
    \item Positive semidefinite matrices $vv^\top$ where $v \in \bbR^n$ has exactly one positive entry, one negative entry, and $n - 2$ entries equal to zero. 
\end{enumerate}
\end{proposition}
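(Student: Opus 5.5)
The strategy is to exhibit, for each listed matrix, an element $g\in G_n$ (a permutation, possibly combined with a positive diagonal rescaling) such that $g^{-\top} X g^{-1}$ lies in $\cM_n$. Each listed matrix is already copositive: $E_{i,j}$ is nonnegative, and $vv^\top$ is positive semidefinite. Since copositivity is preserved by the action of $G_n$, the transformed matrix is in $G_n\cM_n\cap\Cop_n$, so $X$ itself lies in $G_n\cM_n\cap\Cop_n\subseteq\cK_n$. Only off-diagonal entries matter for membership in $\cM_n$.

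\textbf{Case (1).} Permute so that $\{i,j\}$ becomes $\{n-1,n\}$. The resulting matrix has all off-diagonal entries $0$ except the $(n-1,n)$ and $(n,n-1)$ entries, which equal $1$. We need the off-diagonal entries to be nondecreasing along rows and columns, and any move in the direction of increasing indices from an off-diagonal position either stays at $0$ or lands on the bottom-right pair, which equals $1$. So the matrix lies in $\cM_n$.

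\textbf{Case (2).} Replacing $v$ by $-v$ does not change $vv^\top$, so we may assume $v$ has at most one negative entry. Rescale by $D=\diag(|v_1|^{-1},\dots,|v_n|^{-1})$, which turns $vv^\top$ into $\sigma\sigma^\top$ with $\sigma\in\{\pm1\}^n$.
\begin{itemize}
\item If all signs are positive, we get $E\in\cM_n$.
\item If exactly one sign is negative, permute it to position $1$. The first row and column then have off-diagonal entries $-1$, and all other off-diagonal entries are $+1$.
\end{itemize}
In the second subcase, monotonicity holds because every off-diagonal entry with index $1$ is $-1$ and every other off-diagonal entry is $+1$. Increasing indices can only move from a position involving index $1$ to one not involving it, which never decreases the entry.

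\textbf{Case (3).} Let $v$ be supported on $\{p,q\}$ with $v_p>0>v_q$. Rescale so that the nonzero entries become $\pm1$; here we may take $D$ with arbitrary positive entries off the support, since those rows are zero anyway. Permute so that the support is $\{1,n\}$. The only nonzero off-diagonal entries are then $a_{1,n}=a_{n,1}=-1$, with zeros elsewhere. This fails monotonicity, since $a_{1,n}=-1<a_{1,2}=0$ breaks the ordering within row $1$.

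Try instead placing the support at $\{1,2\}$, so the only nonzero off-diagonal entries are $a_{1,2}=a_{2,1}=-1$. Both lie at the smallest indices, and every other off-diagonal entry is $0\ge -1$. Moving in increasing index direction from $(1,2)$ goes to $(1,k)$ or to $(2,k)$ with $k>2$, both of which are $0$, so monotonicity holds and the matrix lies in $\cM_n$. I expect the main care to be in this placement choice and in checking the indexing convention for "nondecreasing" when $i\le \ell$, $j\le k$ with both positions off-diagonal.

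Since the statement asks only for containment in $\cK_n$, I will not verify extremality in $\SPN_n$; it is quoted.
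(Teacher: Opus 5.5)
Your proof is correct and follows essentially the same route as the paper: permute $E_{i,j}$ to $E_{n-1,n}$, rescale $vv^\top$ by $\diag(|v_i|^{-1})$ and move the single negative sign to position $1$, and place the two-point support at $\{1,2\}$ so that the $-1$ entries sit at the smallest indices. The only cosmetic difference is in the all-positive subcase of (2), where you rescale directly to $E\in\cM_n$ while the paper simply appeals to nonnegativity of $vv^\top$.
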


\begin{proof}
\begin{enumerate} 
\item For any $i,j \in [n]$, there is a permutation which exchanges $i$ with $n-1$ and exchanges $j$ with $n$ and leaves all other entries fixed. If $P$ is the $n \times n$ matrix representing this permutation, then $P^\top E_{i,j}P = E_{n-1,n} \in \cM_n$.

\item We can assume that $v$ has at most one negative entry since $vv^\top  = (-v)(-v^\top)$. If all entries of $v$ have the same sign, then $vv^\top$ is nonnegative and the claim follows.  Otherwise, permuting the entries of $v$ if necessary, we can take $v_1 < 0$ and $v_i > 0$ for $i = 2, \ldots, n$. Then, by rescaling with the diagonal matrix

\[D = \begin{bmatrix} \frac{1}{|v_1|} & & &\\ & \frac{1}{v_2} & & \\ && \ddots & \\ & & & \frac{1}{v_n} \end{bmatrix},\]
it suffices to show that the matrix $\begin{bmatrix}1 & -e_{n-1}^\top \\ -e_{n-1} & e_{n-1}e_{n-1}^\top\end{bmatrix} \in \cM_{n}$, where $e_{n-1} \in \bbR^{n-1}$ has all entries equal to $1$. The off-diagonal entries of this matrix are nondecreasing in rows and columns, so the claim holds. 

\item Permuting $v$ if necessary, we can assume that $v_1 < 0$ and $v_2 > 0$. The result follows by scaling by the diagonal matrix $D$ with $d_{1,1} = \frac{1}{|v_1|}, d_{2,2} = \frac{1}{v_2}$, and $d_{i,i} = 1$ for $i \geq 3$, and the observation that 

\[\begin{bmatrix} 1 & -1 & 0_{1, n-2}\\ -1 & 1 & 0_{1,n-2}\\ 0_{n-2,1} &0_{n-2,1} & 0_{n-2.n-2}\end{bmatrix} \in \cM_n,\]
where $0_{\ell, k} \in \bbR^{\ell \times k}$ has all entries equal to 0. 
\end{enumerate}
\end{proof}

It follows from Proposition \ref{prop:ExtremeRaysKn} that $\cK_3 = \SPN_3 = \Cop_3$. Indeed, if $v \in \bbR^3$ is nonzero and has all nonzero entries then either $v$ or $-v$ has at most one negative entry. If all nonzero entries of $v$ have the same sign then $vv^\top$ has nonnegative entries. The only remaining case is when $v$ has one positive, one negative, and one zero entry. So, $\cK_3$ is a convex cone which contains all rank one positive semidefinite matrices and all nonnegative matrices and therefore $\cK_3 = \SPN_3 = \Cop_3$. However, the cone $\cK_n$ is strictly contained in $\SPN_4$ for $n\geq 4. $

\begin{proposition}[$\cK_n\neq \SPN_n$ for $n \geq 4$]
Let $n\ge 4$ and  
\[
v = (-1,-1,\underbrace{1,\dots,1}_{n-2})^\top .
\]
The positive semidefinite matrix $A = vv^\top$ is not an element of $\cK_n$. In particular, $\cK_n$ is a \emph{strict} subset of $\SPN_n$ for $n \geq 4$. 
\end{proposition}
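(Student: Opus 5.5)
We argue by contradiction. Suppose $A=vv^\top\in\cK_n$, so $A=\sum_{i=1}^r X_i$ with each $X_i\in G_n\cM_n\cap\Cop_n$. Write $N=\{1,2\}$ for the indices where $v$ is negative and $P=\{3,\dots,n\}$ for those where it is positive. The plan has three steps. First, use the zeros of the quadratic form of $A$ on the nonnegative orthant to pin down the sign pattern of each $X_i$. Second, note that this pattern is preserved by the action of $G_n$. Third, show that no element of $\cM_n$ can have this pattern.

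\emph{Step 1 (forced structure of the $X_i$).} For $a\in N$ and $b\in P$, the vector $x=e_a+e_b\ge 0$ satisfies $v^\top x=0$, so $x^\top Ax=0$. Each summand is copositive, hence $x^\top X_i x=0$ for every $i$.
\begin{itemize}
\item Restricting to the $2\times 2$ principal submatrix on $\{a,b\}$, copositivity together with $p+r+2q=0$ forces $p=r$ and $q=-p$. Since every index lies in such a pair with an index of the other class, each $X_i$ has all diagonal entries equal to some $d_i\ge 0$, and $(X_i)_{a,b}=-d_i$ for all $a\in N$, $b\in P$.
\item Next we use the standard fact that a nonnegative zero $x$ of a copositive matrix $X$ satisfies $Xx\ge 0$; this follows by perturbing $x$ in the direction $e_j$. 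Applied to $x=e_1+e_3$, it gives $(X_ix)_2=(X_i)_{1,2}-d_i\ge 0$ and $(X_ix)_4=(X_i)_{3,4}-d_i\ge 0$.
\item Since the $(1,3)$ entry of $A$ is $-1=-\sum_i d_i$, some $X:=X_i$ has $d:=d_i>0$.
\end{itemize}
For this $X$ we therefore have $X_{1,2}>0$, $X_{3,4}>0$, and $X_{a,b}<0$ for all $a\in\{1,2\}$, $b\in\{3,4\}$. This is the only place where $n\ge 4$ is used.

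\emph{Step 2 (invariance of signs).} Write $X=g^\top Yg$ with $g=PD\in G_n$ and $Y\in\cM_n$. Positive diagonal scaling preserves the signs of all entries, and the permutation only relabels indices. Hence there are distinct indices $p_1<p_2$ and $q_1<q_2$ of $Y$ with
\[
Y_{p_1,p_2}>0,\qquad Y_{q_1,q_2}>0,\qquad Y_{p_s,q_t}<0\ \text{ for all } s,t\in\{1,2\}.
\]

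\emph{Step 3 (incompatibility with $\cM_n$).} In $\cM_n$, positivity propagates upward: if $Y_{i,j}>0$, $i'\ge i$, $j'\ge j$ and $i'\neq j'$, then $Y_{i',j'}\ge Y_{i,j}>0$.
\begin{itemize}
\item Suppose some $q_t>p_1$. Then the entry $(q_t,p_2)$ dominates $(p_1,p_2)$ coordinatewise, with $q_t\ne p_2$, so $Y_{q_t,p_2}>0$. This contradicts $Y_{p_2,q_t}<0$. Hence $q_1,q_2<p_1$.
\item Applying the same argument with the roles of the pairs $\{p_1,p_2\}$ and $\{q_1,q_2\}$ swapped, using $Y_{q_1,q_2}>0$, gives $p_1,p_2<q_1$.
\end{itemize}
These two conclusions are incompatible, so $A\notin\cK_n$. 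Strictness of $\cK_n\subsetneq\SPN_n$ then follows because $vv^\top$ is positive semidefinite and hence lies in $\SPN_n$.

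The main obstacle is Step 1. Mere copositivity of the summands constrains only the bipartite entries directly; the decisive input is the first-order condition $Xx\ge 0$ at a nonnegative zero, which forces \emph{both} within-class entries $X_{1,2}$ and $X_{3,4}$ to be positive. Without both of these, the ordering argument in Step 3 would not close.
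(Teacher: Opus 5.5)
Your proof is correct, but it takes a different route from the paper's.

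The paper first observes that $\ker A$ contains $n-1$ linearly independent nonnegative vectors. It concludes that $A$ spans an extreme ray of $\Cop_n$, which reduces the problem to showing $A\notin G_n\cM_n$. It then applies Lemma~\ref{lem:Uniform_Row} to $A$ itself: every matrix in $G_n\cM_n$ has a row whose off-diagonal entries are all nonnegative or all negative, while every row of $A$ has off-diagonal entries of both signs.

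You never invoke extremality. Instead you use the nonnegative zeros $e_a+e_b$ and the first-order condition $Xx\ge 0$ to fix the sign pattern of one summand on a $4\times 4$ principal block. You then rule that pattern out by a direct ordering argument. This is in effect Lemma~\ref{lem:Uniform_Row} applied to the $4\times 4$ principal submatrix of $Y$ on $\{p_1,p_2,q_1,q_2\}$; equivalently, $\cG_{+}$ of that block is an induced $2K_2$, which is not threshold.

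The two arguments run on the same engine. Unpacked, the paper's one-line extremality claim is justified by exactly your first-order condition: if $A=B+C$ with $B,C$ copositive and $0\le x\in\ker A$, then $Bx$ and $Cx$ are nonnegative and sum to $0$. Hence $\ker B\supseteq v^\perp$, and $B$ is a nonnegative multiple of $vv^\top$.

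The paper's version is shorter once extremality is granted. Yours is self-contained and extracts only the local sign information it needs. In fact, summing your inequality $(X_i)_{1,2}\ge d_i$ over $i$ against $A_{1,2}=1=\sum_i d_i$ forces equality, so your Step 1 is already most of the way to full extremality.
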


\begin{proof}
Note that $\ker A$ contains $n -1$ linearly independent nonnegative vectors. So, $A$ is an extreme ray of the cone $\Cop_n$. It therefore suffices to show that $A \not \in G_n\cM_n$. By Lemma \ref{lem:Uniform_Row}, any matrix in $\cM_n$ must have a row with all nonnegative off-diagonal entries or a row with all negative off-diagonal entries. Any matrix in $G_n\cM_n$ must therefore also have a row with all nonnegative off-diagonal entries or a row with all negative off-diagonal entries. Since every row of $A$ has at least one positive and one negative off-diagonal entry, $A \not \in G_n\cM_n$.
\end{proof}

\subsection{Checking membership in $G_n\cM_n$}

Here, we discuss the problem of checking that a given matrix is in the orbit of $\cM_n$. The existence of a positive diagonal rescaling is a linear programming feasibility problem. 

\begin{proposition}\label{prop:Rescale_into_Mn}
Let $A = (a_{i,j}) \in \cS^n$. There is a diagonal matrix $D$ with positive diagonal entries such that $DAD \in \cM_n$ if and only if the following system of linear inequalities is feasible
\begin{equation}\label{eq:RescaleLP}\exists \, d_i \text{ s.t. } d_ka_{i,k} - d_ja_{i,j}\geq 0 \text{ for all } i,j,k \text{ with } j<k\text{ and } i\neq j, i\neq k \text{ and } d_i \geq 1\end{equation}

If \eqref{eq:RescaleLP} has a feasible solution $d$, then $D = \diag(d_1,d_2,\ldots, d_n)$ satisfies $DAD \in \cM_n$. 
\end{proposition}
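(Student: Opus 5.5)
The approach is to reduce membership of $DAD$ in $\cM_n$ to a family of inequalities that are linear in the diagonal entries $d_1,\dots,d_n$, and then use homogeneity to replace strict positivity $d_i>0$ by $d_i\geq 1$.

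First I would record a reformulation of $\cM_n$. A symmetric matrix $X$ lies in $\cM_n$ if and only if its off-diagonal entries are nondecreasing along each row, that is, $x_{i,j}\leq x_{i,k}$ whenever $j<k$ and $i\neq j,k$. Necessity is immediate, taking $\ell=i$ in the definition. For sufficiency, let $i\leq \ell$, $j\leq k$, $i\neq j$ and $\ell\neq k$. I would connect $(i,j)$ to $(\ell,k)$ by a two-step monotone path in which each step changes one index.
\begin{itemize}
\item If $i\neq k$, go through $(i,k)$: use row monotonicity in row $i$, then column monotonicity in column $k$.
\item Otherwise go through $(\ell,j)$: this is off-diagonal unless $\ell=j$ as well.
\item If both $i=k$ and $\ell=j$, then $i\leq \ell=j\leq k=i$ forces $i=j$, a contradiction.
\end{itemize}
Column monotonicity is row monotonicity transposed, so it holds by symmetry of $X$. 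Hence row monotonicity alone characterizes $\cM_n$.

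Next I apply this to $X=DAD$, whose entries are $x_{i,j}=d_id_ja_{i,j}$. Row monotonicity reads $d_id_ka_{i,k}\geq d_id_ja_{i,j}$ for all $j<k$ with $i\neq j,k$. Since $d_i>0$, this is equivalent to $d_ka_{i,k}-d_ja_{i,j}\geq 0$, which is exactly the system \eqref{eq:RescaleLP} without the normalization. So a positive diagonal $D$ with $DAD\in\cM_n$ exists if and only if these homogeneous linear inequalities have a solution with all $d_i>0$.

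Finally, the inequalities are invariant under scaling $d\mapsto td$ with $t>0$. Given a positive solution, I would rescale by $t=1/\min_i d_i$ to obtain $d_i\geq 1$. Conversely, any solution with $d_i\geq 1$ is positive, and the previous step shows that $D=\diag(d_1,\dots,d_n)$ satisfies $DAD\in\cM_n$. This also gives the final sentence of the statement.

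The only mildly delicate point is the reduction of the two-index ordering in $\cM_n$ to row monotonicity, specifically avoiding the diagonal when choosing the intermediate entry. Everything else is direct substitution.
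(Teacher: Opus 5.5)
Your proof is correct and follows the same route as the paper. Both write the off-diagonal entries of $DAD$ as $d_id_ja_{i,j}$, reduce membership in $\cM_n$ to row-wise monotonicity, cancel $d_i>0$, and use homogeneity to normalize to $d_i\geq 1$. The only difference is that you justify the reduction of the two-index ordering to row monotonicity, via the two-step path that avoids the diagonal, whereas the paper simply asserts this equivalence.
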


\begin{proof}
Let $D = \diag(d_1,d_2,\ldots, d_n)$ with each $d_i$ positive and set $\tilde{A}  = DAD$. Note that the off-diagonal entries of $\tilde{A}$ are $\tilde{a}_{i,j} = d_id_j a_{i,j}$ for $i \neq j$. Now, if $\tilde{A} \in \cM_n$ if and only if $\tilde{a}_{i,j} \leq \tilde{a}_{i,k}$ for all $i,j,k$ with $j < k$ and $i\neq j, i\neq k$. But

\[\begin{aligned}
\tilde{a}_{i,j} \leq \tilde{a}_{i,k} &\iff d_id_ja_{i,j} \leq d_id_ka_{i,k}\\
&\iff d_i(d_ka_{i,k} - d_ja_{j,k}) \geq 0\\
&\iff d_ka_{i,k} - d_ja_{j,k} \geq 0
\end{aligned}\]
where the last equivalence is due to $d_i > 0$. Note that a solution $d>0$ exists if and only if a solution $d\geq 1$ exists by rescaling since the last inequality is homogeneous. 
\end{proof}

Similarly, the existence of a permutation bringing a matrix into $\cM_n$ can be determined efficiently. 

\begin{proposition}\label{prop:Permutation_into_Mn}
Let $A = (a_{i,j}) \in \cS^n$. There is a polynomial time algorithm to decide if there is a permutation $P$ such that $P^\top A P \in \cM_n$.   
\end{proposition}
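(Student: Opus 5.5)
The plan is to reduce membership in $\cM_n$ up to permutation to a purely pairwise condition on the rows of $A$, and then decide that condition with a greedy algorithm.

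\emph{Step 1: pairwise reformulation.} I will show that a symmetric matrix $B$ lies in $\cM_n$ if and only if
\[b_{p,r}\leq b_{q,r} \quad\text{for all } p<q \text{ and all } r\notin\{p,q\}.\]
This is column monotonicity; row monotonicity follows from it by symmetry. Necessity is immediate from the definition of $\cM_n$. For sufficiency, take off-diagonal positions $(i,j)$ and $(k,\ell)$ with $i\le k$ and $j\le \ell$, and connect them by two monotone steps through an off-diagonal entry.
\begin{itemize}
\item If $i\neq \ell$, go $(i,j)\to(i,\ell)\to(k,\ell)$. The first step is a row step, which is a column step for the transposed entries $b_{j,i}\le b_{\ell,i}$. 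The second step is a column step.
\item If $i=\ell$, then $j<\ell=i\le k$, so $(k,j)$ is off-diagonal. Go $(i,j)\to(k,j)\to(k,\ell)$.
\end{itemize}
In each step the fixed index differs from the two indices being compared. This is the only place where some care with the diagonal is needed, and I expect it to be the main (though mild) obstacle.

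\emph{Step 2: a relation on indices.} For $p\neq q$ in $[n]$, define $p\preceq q$ if and only if $a_{p,r}\le a_{q,r}$ for all $r\notin\{p,q\}$. Computing this relation for all pairs takes $O(n^3)$ time. Note that simultaneously permuting rows and columns does not change which entries $a_{p,r}$ are being compared. So, by Step 1, a permutation $P$ with $P^\top AP\in\cM_n$ exists if and only if there is a linear ordering $\sigma_1,\dots,\sigma_n$ of $[n]$ with $\sigma_s\preceq\sigma_t$ whenever $s<t$. The relation is always taken with respect to the full matrix $A$, and is never recomputed on submatrices.

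\emph{Step 3: greedy algorithm.} Maintain a set $S$ of unplaced indices, initially $S=[n]$. While $S$ is nonempty, look for some $p\in S$ with $p\preceq q$ for every $q\in S\setminus\{p\}$.
\begin{itemize}
\item If such a $p$ exists, append it to the ordering and remove it from $S$.
\item If no such $p$ exists, report that no permutation exists.
\end{itemize}
This uses $O(n^2)$ rounds, each costing $O(n^2)$, so the algorithm runs in polynomial time.

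\emph{Correctness.} Say that $S$ is \emph{orderable} if some ordering of $S$ has every earlier element $\preceq$ every later one. I claim by induction that the algorithm succeeds from $S$ exactly when $S$ is orderable.
\begin{itemize}
\item If $S$ is orderable, the first element of a valid ordering is a candidate $p$, so the algorithm does not stop.
\item Whatever candidate $p$ the algorithm chooses, $S\setminus\{p\}$ is still orderable, because restricting a valid ordering of $S$ to $S\setminus\{p\}$ keeps all required relations.
\item Conversely, if $p\preceq q$ for all remaining $q$ and $S\setminus\{p\}$ has a valid ordering, then placing $p$ first gives a valid ordering of $S$.
\end{itemize}
Hence the algorithm outputs an ordering exactly when one exists, and the corresponding permutation matrix $P$ satisfies $P^\top AP\in\cM_n$.
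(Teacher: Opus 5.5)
Your proposal is correct and is essentially the paper's argument. Your relation $p\preceq q$ holds exactly when the paper's digraph $D_A$ (arc $j\to k$ whenever $a_{i,j}<a_{i,k}$ for some $i\notin\{j,k\}$) has no arc $q\to p$, so your greedy step of extracting an element that is $\preceq$ every remaining one is Kahn's topological sort of $D_A$, and your correctness induction reproves that a valid ordering exists iff $D_A$ is acyclic. You also supply the proof of the pairwise (column) reformulation of $\cM_n$ that the paper leaves as ``it can be verified''. The only slip is that the greedy runs for $n$ rounds, not $O(n^2)$, which does not affect polynomiality.
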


\begin{proof}
    Since the matrices involved are symmetric, it can be verified that $B\in \mathcal{M}_n \iff
    b_{i,j} \leq b_{i,k}$ for all $j < k$ and $i \not \in \{j,k\}.$ That is, we only need to check every row of $B$.
    Now suppose $B =P^{\top}AP$, where $P$ is a permutation matrix and let $\pi$ be the permutation corresponding to $P$, so that  $b_{r,s} =a_{\pi(r),\pi(s)}$. Then by the observation before $B \in \mathcal{M}_n$ if and only if $$a_{i,j} < a_{i,k}\qquad\textup{whenever } \pi^{-1}(j)< \pi^{-1}(k),$$
    for all distinct $i,j,k\in [n]$. 

    Define a directed graph $D_A$ on the vertex set $[n]$ by inserting an arc
$j\to k$ whenever there exists $i\notin\{j,k\}$ such that $a_{i,j}<a_{i,k}$.
Then a permutation $\pi$ satisfies the displayed implication if and only if
$\pi$ is a topological ordering of $D_A$. Hence such a permutation exists if and only if $D_A$ is acyclic.

The graph $D_A$ can be constructed in $O(n^3)$ time by checking all triples $(i,j,k)$, and acyclicity can be tested in polynomial time by topological sorting.
\end{proof}

Propositions \ref{prop:Rescale_into_Mn} and \ref{prop:Permutation_into_Mn} state that there is a polynomial time algorithm to check if a given matrix is in the orbit of $\cM_n$ under the action of the symmetric group or under the action of the group of positive diagonal matrices. However, copositivity is preserved under action by the group $G_n$, which is a larger set. 

\section{Applications to Optimization}\label{sec:Optimization}

Consider the standard quadratic programming (St-QP) problem 
\begin{equation}\label{eq:non-sepStQP}
 z^*  :=  \min \{x^\top Q x \;\vert\; x \in\Delta^n\}, 
\end{equation} 
where $Q\in\cS^n$ and $\Delta^n =\{x\in\mathbb{R}^n_+ \;\vert\; e^\top x = 1\}$ is the standard simplex. 
\begin{remark}\label{rem:St-QP}
Note that a problem of the form $\min \{x^\top  \tilde Q x + 2\alpha^\top  x \;\vert\; x \in\Delta^n\}$ can also be put in the form of (St-QP) by setting $Q := \tilde Q + \alpha e^\top  + e\alpha^\top $.
\end{remark}

St-QP is known to be NP-Hard and has attracted significant attention in the conic programming literature. %
A classical result in~\cite{burer2009copositive} states that there is an equivalent conic reformulation of St-QP as the following completely positive program (CP):
\begin{equation*}\label{eq:non-sepStQP-CP}
  z^* =  z_{\textup{CP}} :=   \min \{  Q \bullet X  \;\vert\; E \bullet X = 1, \ X \in  \CP^n\}.
\end{equation*}
Here, $\CP^n$ denotes the cone of $n\times n$  completely positive matrices and for $A,B \in \cS^n,$ $A \bullet B = \trace(AB)$. The dual problem to CP is the following copositive program (COP):
\begin{equation}\label{eq:non-sepStQP-COP}
  z_{\textup{COP}} :=   \max \{  \lambda\;\vert\; Q- \lambda  E \in  \Cop_n\}.
\end{equation}
Since both CP and COP are intractable for $n\ge5$, their tractable approximations in the form of doubly nonnegative relaxation (DNN) and SPN are frequently used. Here, we define the DNN relaxation of St-QP as
\begin{equation*}\label{eq:non-sepStQP-DNN}
  z_{\textup{DNN}} :=   \min \{  Q \bullet X \;\vert\; E \bullet X = 1, \ X \in  \DNN_n\},
\end{equation*}
where $\DNN_n$ denotes the cone of $n\times n$ doubly nonnegative matrices. Finally, we have the following SPN formulation
\begin{equation}\label{eq:non-sepStQP-SPN}
  z_{\textup{SPN}} :=   \max \{  \lambda\;\vert\; Q- \lambda E  \in     \SPN_n\}. 
\end{equation}
Since the strong duality holds between CP and COP as well as between DNN and SPN, we have the following relationship:
\[
z_{\textup{SPN}} = z_{\textup{DNN}} \le z_{\textup{CP}} = z_{\textup{COP}}.
\]
Hence, we have
\[
z_{\textup{SPN}} = z_{\textup{COP}} \iff  z_{\textup{DNN}} = z_{\textup{CP}} ,
\]
which implies that the DNN relaxation is exact whenever SPN and COP yield the same objective function value. A critical observation in this regard is the following.
\begin{observation}\label{obs:mainObservation}
Let $\cQ\subseteq \cS^n$. Using (\ref{eq:non-sepStQP-COP}) and (\ref{eq:non-sepStQP-SPN}), we observe that if $\cQ \cap (\Cop_n + \lambda E)  =  \cQ \cap (\SPN_n + \lambda E) $ for $\lambda\in\mathbb{R}$, then $z_{\textup{SPN}} = z_{\textup{COP}}$, that is the DNN relaxation is exact. 
Moreover, if $\cQ + \lambda E =\cQ$ for $\lambda\in\mathbb{R}$, then the sufficient condition simply becomes  $\cQ \cap \Cop_n  =  \cQ \cap \SPN_n  $.
\end{observation}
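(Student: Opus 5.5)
The observation follows from the two dual characterizations (\ref{eq:non-sepStQP-COP}) and (\ref{eq:non-sepStQP-SPN}), so I will compare their feasible sets directly. Fix $Q \in \cQ$. I read the hypothesis as holding for every $\lambda \in \bbR$, so that $\cQ \cap (\Cop_n + \lambda E) = \cQ \cap (\SPN_n + \lambda E)$. This makes the condition $Q - \lambda E \in \Cop_n$ equivalent to $Q \in \Cop_n + \lambda E$. Because $Q \in \cQ$, that membership is equivalent to $Q \in \SPN_n + \lambda E$, which in turn is equivalent to $Q - \lambda E \in \SPN_n$.

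It follows that the sets $\{\lambda \mid Q - \lambda E \in \Cop_n\}$ and $\{\lambda \mid Q - \lambda E \in \SPN_n\}$ are identical. Their suprema are therefore equal, giving $z_{\textup{COP}} = z_{\textup{SPN}}$. This holds even when the sets are empty or unbounded, though for St-QP both are nonempty and bounded above by $\min_{x\in\Delta^n} x^\top Q x$. Combining this with the chain $z_{\textup{SPN}} = z_{\textup{DNN}} \le z_{\textup{CP}} = z_{\textup{COP}}$ gives $z_{\textup{DNN}} = z_{\textup{CP}} = z^*$, so the DNN relaxation is exact.

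For the second claim, assume $\cQ + \lambda E = \cQ$ for all $\lambda$ and $\cQ \cap \Cop_n = \cQ \cap \SPN_n$. Take $Q \in \cQ$ with $Q - \lambda E \in \Cop_n$. Then $Q - \lambda E \in \cQ$ by invariance, so $Q - \lambda E \in \cQ \cap \Cop_n = \cQ \cap \SPN_n$. The reverse inclusion holds because $\SPN_n \subseteq \Cop_n$. This recovers the hypothesis of the first part. Alternatively, one can observe directly that $\cQ \cap (\Cop_n + \lambda E) = (\cQ \cap \Cop_n) + \lambda E$ under the invariance assumption.

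The only point needing care is that the equality of cone intersections must hold for all $\lambda$, or at least for all $\lambda$ up to the optimum, not just for a single $\lambda$. The rest is unwinding definitions and applying strong duality, which the paper has already stated.
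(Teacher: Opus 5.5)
Your proof is correct and is the same direct unwinding of \eqref{eq:non-sepStQP-COP} and \eqref{eq:non-sepStQP-SPN} that the paper intends; the paper gives no separate argument. One refinement: your closing caveat is overstated, because $\Cop_n$ is closed and so the maximum in \eqref{eq:non-sepStQP-COP} is attained. Hence equality of the two intersections at the single value $\lambda = z_{\textup{COP}}$ already gives $Q - z_{\textup{COP}}E \in \SPN_n$ and therefore $z_{\textup{SPN}} = z_{\textup{COP}}$, which is exactly how the paper later applies the observation to the block class $\cB$.
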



\subsection{Prior Exactness Results}

In this section, we discuss previously known exactness results for DNN relaxations.
We present these results here for completeness and to provide context for the new exactness results in Section~\ref{sec:NewResults}. In particular, the proofs below illustrate how Observation~\ref{obs:mainObservation} unifies and simplifies the arguments underlying prior results.

First, we consider the case in which the objective matrix has its minimum entry on the diagonal. Let $\cQ_n^{\min} := \{A = (a_{ij})\in \cS^n \; \vert \; \min_{i,j} a_{i,j} = \min_{i} a_{ii}\}$ be the collection of $n\times n$ symmetric matrices with minimum entry on the diagonal. Notice that $\cQ_n^{\min} + \lambda E = \cQ_n^{\min}$ for $\lambda\in\mathbb{R}$.  The following result implies that the DNN relaxation of problem~\eqref{eq:non-sepStQP} is exact when $Q\in\cQ_n^{\min}$. 

\begin{proposition}[{\cite[Proposition 2]{gokmen2022standard}}]
 $Q \in \cQ_n^{\min}\cap\Cop_n $ if and only if $Q \in \cQ_n^{\min} \cap \mathcal{SPN}_n$.
\end{proposition}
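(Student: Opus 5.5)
The plan is to reduce to Theorem~\ref{thm:Mn_SPN} by a permutation. The inclusion $\SPN_n\subseteq\Cop_n$ is standard, so only one direction needs work: take $Q\in\cQ_n^{\min}\cap\Cop_n$ and show $Q\in\SPN_n$. Membership in $\cQ_n^{\min}$, copositivity and the SPN property are all invariant under simultaneous permutation of rows and columns. So we may assume the minimum entry of $Q$ sits at $q_{1,1}$, i.e.\ $q_{1,1}=m:=\min_{i,j}q_{i,j}$.

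\textbf{Case $m\ge 0$.} Then $Q$ is entrywise nonnegative and so trivially SPN.

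\textbf{Case $m<0$.} Copositivity fails already at $x=e_1$, since $e_1^\top Qe_1=m<0$. So this case cannot occur.

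\textbf{Case $m=0$.} This is the only case with content. Now $Q\geq 0$ entrywise, since every entry is at least the minimum $q_{1,1}=0$. Hence $Q$ is nonnegative, and therefore in $\SPN_n$ (with zero PSD part).

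So the statement as literally given is almost trivial: $Q\in\cQ_n^{\min}\cap\Cop_n$ forces $\min_i q_{ii}\ge 0$, hence every entry is nonnegative. The real content is the exactness of the DNN relaxation via Observation~\ref{obs:mainObservation}. This uses the invariance $\cQ_n^{\min}+\lambda E=\cQ_n^{\min}$: shifting by $\lambda E$ keeps the minimum on the diagonal, so $Q-\lambda E$ is copositive exactly when its minimum diagonal entry is nonnegative, which is exactly when it is nonnegative.

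If a route through the paper's machinery is wanted instead, one can note that every element of $\cQ_n^{\min}\cap\Cop_n$ is nonnegative. Nonnegative matrices lie in $\cK_n$ by Proposition~\ref{prop:ExtremeRaysKn}(1), and $\cK_n\subseteq\SPN_n$ by Corollary~\ref{cor:GeneratedConeSPN}. The only step requiring care is the observation that the minimum diagonal entry must be nonnegative (evaluate the form at standard basis vectors). There is no genuine obstacle; the main thing to check is that no off-diagonal entry can be negative, which follows because all entries are bounded below by the diagonal minimum.
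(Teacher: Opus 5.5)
Your argument is correct and is essentially the paper's proof: copositivity gives $\min_i q_{ii}\ge 0$, membership in $\cQ_n^{\min}$ then makes every entry nonnegative, and a nonnegative matrix lies in $\SPN_n$. The announced reduction to Theorem~\ref{thm:Mn_SPN} is never used and can be dropped, as can the permutation step, the separate case $m=0$ (already covered by $m\ge 0$), and the detour through $\cK_n$.
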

\begin{proof}
 We only need to prove the forward direction. Let $Q \in  \cQ_n^{\min}\cap\Cop_n$. Since $Q\in \Cop_n$, the diagonal entries are all nonnegative, hence, $\min_i Q_{ii} \ge 0$. Since $Q\in \cQ_n^{\min}$, this also implies that $\min_{i,j} Q_{ij} \ge 0$, meaning that $Q$ is a nonnegative matrix. Hence, we deduce that $Q\in\SPN_n$.
\end{proof}
Note that a relevant subset of $\cQ_n^{\min}$ is the set of matrices that are negative semidefinite over the standard simplex (see, Proposition~3 in~\cite{gokmen2022standard}). That is,
\[
\cQ_n^- := \{ Q \in \cS^n \;\vert\; d^\top Q d \le 0 \ \text{ for all } d\in \mathbb{R}^n \text{ with } e^\top d = 0\} \subseteq \cQ_n^{\min}.
\]

Next, we consider the case of objective functions which are convex over the simplex. Let 
\[\cQ_n^+ := \{ Q \in \cS^n \; \vert \; d^\top  Q d \ge 0 \ \text{ for all } d\in \mathbb{R}^n \text{ with } e^\top  d = 0\}.\]

Notice that $\cQ_n^{+} + \lambda E =  \cQ_n^{+}$ for $\lambda\in\mathbb{R}$. The following result implies that the DNN relaxation of problem~\eqref{eq:non-sepStQP} is exact when $Q\in\cQ_n^+$. The result appears as \cite[Proposition 4]{gokmen2022standard} in which the authors provide an explicit SPN decomposition in their proof. Here we provide a simple alternative proof which does not require the construction of such a decomposition. 
\begin{proposition}
 $Q \in \cQ_n^+\cap\Cop_n $ if and only if $Q \in \cQ_n^+ \cap \mathcal{SPN}_n$.
\end{proposition}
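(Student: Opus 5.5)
Only the forward direction needs proof, since $\SPN_n\subseteq\Cop_n$. Fix $Q\in\cQ_n^+\cap\Cop_n$. The plan is to find a scalar $\lambda\ge 0$ such that $Q-\lambda E$ is positive semidefinite. Then
\[Q=(Q-\lambda E)+\lambda E\]
writes $Q$ as a PSD matrix plus a nonnegative matrix, so $Q\in\SPN_n$. Because $Q\in\cQ_n^+$, the form $x^\top Qx$ is already nonnegative on the hyperplane $e^\top x=0$. Subtracting a multiple of $E=ee^\top$ only changes the form along the direction $e$, so we can hope to choose $\lambda$ to make it nonnegative everywhere.

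The natural candidate is the minimum value of the standard quadratic program. Set
\[\lambda:=\min\{x^\top Qx \;\vert\; x\in\Delta^n\}=z^*.\]
Copositivity of $Q$ gives $\lambda\ge 0$. Note that $Q-\lambda E$ is still convex along $\{e^\top d=0\}$, since $d^\top Ed=0$ there. We claim $Q-\lambda E\succeq 0$. Write an arbitrary $y\in\bbR^n$ as $y=tx+d$ with $e^\top d=0$. When $t\neq 0$ we may normalize so that $e^\top x=1$. The quadratic form of $Q-\lambda E$ restricted to the affine hyperplane $\{e^\top x=1\}$ is the convex function
\[g(x)=x^\top Qx-\lambda.\]
The main obstacle is that $g\ge 0$ is known only on $\Delta^n$, not on the whole affine hyperplane, because the minimum defining $\lambda$ is taken over the simplex.

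The resolution I would try first is to use that $Q\in\cQ_n^+$ and compare with the unconstrained minimum over $\{e^\top x=1\}$. Since $g$ is convex on this affine plane, it is bounded below there unless it has a recession direction $d$ with $d^\top Qd=0$ and $d^\top Qx$ nonzero. So instead of insisting on $\lambda=z^*$, take
\[\lambda:=\inf\{x^\top Qx \;\vert\; e^\top x=1\}\le z^*.\]
If $\lambda\ge 0$ and the infimum is finite, then $Q-\lambda E$ is nonnegative on $\{e^\top x=1\}$, hence by homogeneity on every $y$ with $e^\top y\neq 0$. By continuity, and by the convexity assumption on $e^\perp$, it is also nonnegative on $e^\perp$. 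So $Q-\lambda E\succeq 0$, as required.

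This leaves two cases. In the first, the infimum over the affine plane is negative or $-\infty$. In that case I would use convexity to reduce the dimension. The minimizer of the convex function $x^\top Qx$ over $\Delta^n$ lies on a face $\Delta_S$, and the KKT conditions give a multiplier $\mu=z^*\ge 0$ together with $Qx^*\ge z^*e$, with equality on $S$. Consider the matrix $Q-z^*E$. It is PSD on the subspace $\{e^\top y=0\}+\bbR x^*$: expand $(x^*+d)^\top(Q-z^*E)(x^*+d)$ using $(Q-z^*E)x^*\ge 0$ and $d^\top Qd\ge 0$. For the entries off the face I would try to exploit $(Q-z^*E)x^*\ge 0$, in the spirit of a Lemma~\ref{lem:row_signs}-type splitting. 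One could choose a nonnegative matrix $N$ supported on the rows and columns outside $S$ that absorbs the positive residual $(Q-z^*E)x^*$, and then check that $Q-z^*E-N\succeq 0$. Verifying this last PSD claim, or replacing it by an induction on $n$ that removes a coordinate outside $S$ (where $\cQ^+$ and copositivity both pass to principal submatrices), is the step I expect to require the real work.
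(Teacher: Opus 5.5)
Your good case is correct: if $\lambda=\inf\{x^\top Qx : e^\top x=1\}$ is finite and nonnegative, then $Q-\lambda E\succeq 0$, and $Q=(Q-\lambda E)+\lambda E$ is SPN. The gap is that nothing else is proved, and the remaining case is not a corner case.

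The target ``$Q-\lambda E\succeq 0$ for some $\lambda\ge 0$'' is false in general. If $Q\in\cQ_n^+\cap\Cop_n$ has a zero diagonal entry and is not PSD, the diagonal of $Q-\lambda E$ forces $\lambda=0$. For example, $Q=\begin{bmatrix}2&\tfrac12\\ \tfrac12&0\end{bmatrix}$ is nonnegative, satisfies $(1,-1)Q(1,-1)^\top=1>0$, and has $\det Q=-\tfrac14$.

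Your treatment of the bad case also contains a false step. Since $e^\top x^*=1$, the subspace $\{e^\top y=0\}+\bbR x^*$ is all of $\bbR^n$, so you are asserting $Q-z^*E\succeq 0$. The example refutes this: there $z^*=0$ and $x^*=(0,1)$. In your expansion, the cross term $2t\,d^\top(Q-z^*E)x^*$ has no sign once $d$ has entries outside $S$. What the KKT conditions actually give is only that the principal block of $Q-z^*E$ on $S$ is PSD. The construction of $N$ and the check $Q-z^*E-N\succeq0$ are then just a restatement of the claim. The alternative induction that deletes $j\notin S$ has no way to lift $Q(j)\in\SPN_{n-1}$ back to $Q$, since Lemma~\ref{lem:row_signs} needs row $j$ to have constant sign.

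The missing idea is how to use the information you already have in the bad case. If the affine infimum is $-\mu<0$ and finite, your own argument shows $Q+\mu E\succeq 0$. This is exactly what the paper extracts from Finsler's lemma. The paper then lifts to
\[\hat Q=\begin{bmatrix} Q+\mu E & -e\\ -e^\top & 1/\mu\end{bmatrix}\in\cS^{n+1},\]
whose Schur complement with respect to the corner entry $1/\mu$ is $Q$. Three steps finish the argument:
\begin{enumerate}
\item By Lemma~\ref{lem:row_signs} (second part, with off-diagonal block $-e\le 0$), copositivity of $Q$ gives $\hat Q\in\Cop_{n+1}$.
\item Since $\hat Q$ has the PSD principal submatrix $Q+\mu E$ of order $n$, Lemma~\ref{lem:COP=SPNwith_PSD_submatrix} gives $\hat Q\in\SPN_{n+1}$.
\item Lemma~\ref{lem:row_signs} applied again gives $Q\in\SPN_n$.
\end{enumerate}
Lemma~\ref{lem:COP=SPNwith_PSD_submatrix} is where copositivity is used in a nontrivial way, and nothing in your plan replaces it.

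The subcase where the infimum is $-\infty$, which you correctly flag, is handled by perturbation. Run the argument for $Q+\epsilon I$, which is copositive and strictly positive on $e^\perp\setminus\{0\}$, so its affine infimum is finite. Then let $\epsilon\to 0$, using that $\SPN_n$ is closed. This perturbation is also needed to justify the paper's own appeal to Finsler's lemma. For $Q=\begin{bmatrix}1&\tfrac12\\ \tfrac12&0\end{bmatrix}\in\cQ_2^+\cap\Cop_2$, one has $\det(Q+\lambda E)=-\tfrac14$ for every $\lambda$, so no $\lambda$ makes $Q+\lambda E$ PSD.
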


\begin{proof}
    We only need to prove the forward direction. Let $Q \in  \cQ_n^+\cap\Cop_n$. Since $Q\in \cQ_n^+$, there exists $\lambda\ge0$ such that $Q+\lambda ee^\top  \in \PSD_n$ due to Finsler Lemma. If $\lambda=0$, then $Q\in \PSD_n \subseteq \SPN_n$. So, let us assume that $\lambda > 0$.

    Since $Q \in \Cop_n$ and $\lambda >0$, we have $\begin{bmatrix}
        Q+\lambda ee^\top  & - e \\ -e^\top  & \frac{1}{\lambda}
    \end{bmatrix} \in \Cop_{n+1}$ due to Lemma~\ref{lem:row_signs} (notice that the Schur complement with respect to $\begin{bmatrix}\frac{1}{\lambda}\end{bmatrix}$ is exactly $Q$). Observe also that since $ Q+\lambda ee^\top  \in \PSD_n $, we have that $\begin{bmatrix}
        Q+\lambda ee^\top  & - e \\ -e^\top  & \frac{1}{\lambda}
    \end{bmatrix} \in \SPN_{n+1}$ due to Lemma~\ref{lem:COP=SPNwith_PSD_submatrix}. Finally, we apply Lemma~\ref{lem:row_signs} again to conclude that  $Q \in \SPN_n$. 
\end{proof}

\subsection{New Exactness Results}\label{sec:NewResults}
We now leverage the results of Section~\ref{sec:MainTheorem} to prove the exactness of the DNN relaxation for new classes of objective functions. 

Our first result follows directly from Observation~\ref{obs:mainObservation} and Theorem~\ref{thm:Mn_SPN}.
\begin{corollary}\label{cor:Mn_DNN_tight}
If $Q \in \cM_n$, then $z^* = z_{\textup{SPN}}$ for the standard quadratic program with objective function $Q$. 
\end{corollary}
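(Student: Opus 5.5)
The plan is to apply Observation~\ref{obs:mainObservation} with $\cQ = \cM_n$, which requires two things. First, $\cM_n$ must be invariant under adding multiples of $E$. Second, copositivity and the SPN property must coincide on $\cM_n$, which is exactly Theorem~\ref{thm:Mn_SPN}.

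For invariance, note that membership in $\cM_n$ is defined only by the inequalities $a_{i,j} \le a_{k,\ell}$ among off-diagonal entries with $i\le k$, $j\le \ell$, $i\neq j$, $k\neq\ell$. Replacing $A$ by $A+\lambda E$ adds the same constant $\lambda$ to every entry, so each such inequality is preserved. Hence $\cM_n + \lambda E = \cM_n$ for every $\lambda\in\bbR$.

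Now fix $Q\in\cM_n$ and consider the feasible sets of \eqref{eq:non-sepStQP-COP} and \eqref{eq:non-sepStQP-SPN}. For any $\lambda$, the matrix $Q-\lambda E$ lies in $\cM_n$ by invariance. Theorem~\ref{thm:Mn_SPN} then gives $Q-\lambda E\in\Cop_n$ if and only if $Q-\lambda E\in\SPN_n$. So the two programs have identical feasible sets of $\lambda$, and therefore $z_{\textup{SPN}} = z_{\textup{COP}}$.

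Finally, the chain $z_{\textup{SPN}} = z_{\textup{DNN}} \le z_{\textup{CP}} = z_{\textup{COP}}$ together with $z^* = z_{\textup{CP}}$ from \cite{burer2009copositive} yields $z^* = z_{\textup{SPN}}$.

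There is no real obstacle here. The only point needing care is the strong duality between CP and COP, which the paper takes as known, so that $z^*=z_{\textup{COP}}$.
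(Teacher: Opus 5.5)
Your proposal is correct and follows the paper's own route: the paper likewise observes that $\cM_n + \lambda E = \cM_n$ because adding a constant does not change the ordering of entries, and then combines Observation~\ref{obs:mainObservation} with Theorem~\ref{thm:Mn_SPN}. Your argument just writes out the steps the paper leaves implicit, namely that the feasible sets of $\lambda$ in the COP and SPN programs coincide and how the duality chain then gives $z^* = z_{\textup{SPN}}$.
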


\begin{proof}
$\cM_n + \lambda E = \cM_n$ for all $\lambda \in \bbR$ since adding $\lambda$ to all entries of a matrix does not change their ordering. 

\end{proof}

We now apply Corollary~\ref{cor:Mn_DNN_tight} to an example from
 \cite{gokmen2022standard} in which
 the tightness of the DNN relaxation is proven for three classes of matrices.  The matrix $Q\in\mathcal{S}^5$ in the following example does \textit{not}  belong to any of these classes yet  the DNN relaxation is proven to be tight since the authors provide an explicit SPN certificate. We show that the matrix $Q$ indeed belongs to $\mathcal{M}_5$.
\begin{example}[{\cite[Example 4]{gokmen2022standard}}]\label{ex:Extraneous1}
Consider the matrix 

\[Q = \begin{bmatrix}2 & 2 & 2 & 2 & 2\\ 2 & 2 & 2 & 2 & 2\\  2&2&2&1&2\\ 2&2&1&2&0\\2&2&2&0&2\end{bmatrix}.\]
We have 

\[P^{\top}QP = \begin{bmatrix}2 & 0 & 1 & 2 & 2\\ 0 & 2 & 2 & 2 & 2\\ 1 & 2 & 2 & 2 & 2\\ 2 & 2 & 2 & 2 & 2\\ 2 & 2 & 2 & 2 & 2 \end{bmatrix} \in \cM_5,\]
where $P$ is the permutation matrix corresponding to $(1,2,3,4,5)\mapsto (4,5,3,1,2)$.

The optimal value of the program 

\[\max \lambda \text{ s.t. } Q - \lambda E \in \Cop_5\]
is $z_{\textup{COP}} = 1$.  Corollary \ref{cor:Mn_DNN_tight} implies that $z_{\textup{SPN}} = z_{\textup{COP}}$.
\end{example}

We next show that Corollary~\ref{cor:Mn_DNN_tight} resolves a conjecture of~\cite{dey_convexification_2025-1}. The conjecture concerns separable standard quadratic programs, which are problems of the form
\begin{equation}\label{eq:StandardSeparableProgram}
\min \;  \left\{q(x) := \sum_{i=1}^{n} 2\alpha_i x_i + \sum_{i=1}^{n} \beta_i x_i^2 \;\middle  \vert\; x \in \Delta^n \right\},
\end{equation}
for scalars $\alpha_i, \beta_i \in \bbR$, and has been studied in~\cite{bomze_separable_2012, dey_convexification_2025-1}. Specifically,~\cite[Conjecture 1]{dey_convexification_2025-1} asserts that the DNN relaxation of~\eqref{eq:StandardSeparableProgram} is always tight. We confirm this below.

\begin{proposition}\label{prop:SeparableDNNTight}
Let $q(x) = \sum_{1\leq i \leq n} 2\alpha_i x_i+ \sum_{i = 1}^{n}\beta_i x_i^2$ for some scalars $\alpha_{i},\beta_i \in \bbR$. Then, the doubly nonnegative relaxation of the standard quadratic programming problem \eqref{eq:StandardSeparableProgram} is tight. 
\end{proposition}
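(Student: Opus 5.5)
By Remark~\ref{rem:St-QP}, problem \eqref{eq:StandardSeparableProgram} is the standard quadratic program with objective matrix
\[Q = \diag(\beta_1,\ldots,\beta_n) + \alpha e^\top + e\alpha^\top,\]
so that $q_{i,i} = \beta_i + 2\alpha_i$ and $q_{i,j} = \alpha_i + \alpha_j$ for $i \neq j$. The plan is to show that $Q$ lies in the orbit of $\cM_n$ under a permutation. Then we apply Corollary~\ref{cor:Mn_DNN_tight} to the permuted problem, and transfer the conclusion back using the fact that simultaneous permutation of rows and columns does not change $z^*$, $z_{\textup{SPN}}$, or $z_{\textup{DNN}}$.

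First, choose a permutation $\pi$ of $[n]$ that sorts the linear coefficients, so that $\alpha_{\pi(1)} \leq \alpha_{\pi(2)} \leq \cdots \leq \alpha_{\pi(n)}$. Let $P$ be the corresponding permutation matrix and set $\tilde Q = P^\top Q P$. The off-diagonal entries are then $\tilde q_{i,j} = \alpha_{\pi(i)} + \alpha_{\pi(j)}$ for $i \neq j$. This expression is nondecreasing in $i$ for fixed $j$ and nondecreasing in $j$ for fixed $i$. Hence whenever $i \leq k$, $j \leq \ell$, $i\neq j$ and $k \neq \ell$, we have $\tilde q_{i,j} \leq \tilde q_{k,\ell}$, so $\tilde Q \in \cM_n$. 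The diagonal entries play no role in membership in $\cM_n$, which is why the arbitrary $\beta_i$ cause no difficulty.

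Second, Corollary~\ref{cor:Mn_DNN_tight} gives $z^* = z_{\textup{SPN}}$ for the program with objective $\tilde Q$. The map $x \mapsto P^\top x$ is a bijection of $\Delta^n$ with $x^\top Q x = (P^\top x)^\top \tilde Q (P^\top x)$, so the two St-QPs have equal optimal values. Likewise, $Q - \lambda E \in \SPN_n$ if and only if $\tilde Q - \lambda E = P^\top (Q-\lambda E) P \in \SPN_n$, since $P^\top E P = E$ and $\SPN_n$ is permutation invariant. Therefore $z_{\textup{SPN}}$ also agrees for $Q$ and $\tilde Q$, and by strong duality $z_{\textup{DNN}} = z_{\textup{SPN}} = z^*$ for $Q$, which is the claimed tightness.

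I expect no serious obstacle. The only points needing care are the bookkeeping of the reduction via Remark~\ref{rem:St-QP}, namely that the linear term becomes the rank-two part $\alpha e^\top + e\alpha^\top$ while the separable quadratic part lands on the diagonal, and checking that the chosen sorting gives ordering in both rows and columns rather than only rows.
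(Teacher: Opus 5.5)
Your proposal is correct and follows the paper's proof. Like the paper, you write $Q = \diag(\beta_1,\ldots,\beta_n) + \alpha e^\top + e\alpha^\top$ via Remark~\ref{rem:St-QP}, sort the $\alpha_i$ so that the permuted matrix lies in $\cM_n$ (the $\beta_i$ only touch the diagonal), invoke Corollary~\ref{cor:Mn_DNN_tight}, and additionally spell out the permutation-invariance of $z^*$ and $z_{\textup{SPN}}$ that the paper leaves implicit.
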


\begin{proof}
Considering Remark~\ref{rem:St-QP}, we obtain the matrix representative of the quadratic form $q$ as 
\[Q = \begin{bmatrix} \beta_1 + 2\alpha_1 & \alpha_1 + \alpha_2 & \dots & \alpha_1 + \alpha_n\\ \alpha_1 + \alpha_2 & \beta_2 + 2\alpha_2 & \dots & \alpha_2 + \alpha_n\\ \vdots & & \ddots & \vdots\\ \alpha_n + \alpha_1 & \alpha_n + \alpha_2 & \dots & \beta_n + 2\alpha_n\end{bmatrix}.\]
If $\pi$ is a permutation such that $\alpha_1\leq \alpha_2\leq \cdots \leq \alpha_n$, then permuting the rows and columns of~$Q$ according to $\pi$ gives a matrix in $\cM_n$. The result therefore follows from Corollary \ref{cor:Mn_DNN_tight}.
\end{proof}

Observation \ref{obs:mainObservation}  is also helpful in identifying tight DNN relaxations for subsets of matrices $\cQ \subseteq \cS^n$ for which $\cQ + \spann E \neq \cQ$. This is demonstrated below for matrices with the block structure described by Remark \ref{ex:BlockSign_SPN}. 

\begin{proposition}
Let $\cB$ be the class of matrices with block structure given by Remark \ref{ex:BlockSign_SPN}. If $Q\in \cS^n$ and $Q -z_{\textup{SPN}}E \in \cB$, then $z_{\textup{SPN}} = z_{\textup{COP}} = z^*$.  
\end{proposition}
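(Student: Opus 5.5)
We want to show that if $Q - z_{\textup{SPN}}E \in \cB$, then $z_{\textup{SPN}} = z_{\textup{COP}}$; the equality with $z^*$ then follows from $z^* = z_{\textup{CP}} = z_{\textup{COP}}$. Since $\cB + \lambda E \neq \cB$ in general, Observation~\ref{obs:mainObservation} cannot be applied in its simplified form. The plan is to work directly at the single value $\lambda = z_{\textup{SPN}}$ and use a monotonicity argument in $\lambda$.

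\textbf{Step 1: the inequality $z_{\textup{SPN}} \le z_{\textup{COP}}$.} This is already recorded in the chain $z_{\textup{SPN}} = z_{\textup{DNN}} \le z_{\textup{CP}} = z_{\textup{COP}}$, so it remains to show $z_{\textup{COP}} \le z_{\textup{SPN}}$.

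\textbf{Step 2: the reverse inequality, by contradiction.} Suppose $z_{\textup{COP}} > z_{\textup{SPN}}$. Pick $\lambda$ with $z_{\textup{SPN}} < \lambda \le z_{\textup{COP}}$; since $E \ge 0$ is copositive and $\Cop_n$ is a cone, $Q - \lambda E \in \Cop_n$. Then
\[Q - z_{\textup{SPN}}E = (Q - \lambda E) + (\lambda - z_{\textup{SPN}})E\]
is copositive, and in fact lies in the interior of $\Cop_n$, because adding a positive multiple of $E$ makes $x^\top(\cdot)x \geq (\lambda - z_{\textup{SPN}})(e^\top x)^2 > 0$ on $\bbR^n_+\setminus\{0\}$.

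\textbf{Step 3: apply the block-sign result.} By hypothesis $Q - z_{\textup{SPN}}E \in \cB$, and by Corollary~\ref{ex:BlockSign_SPN} such matrices satisfy \ref{itm:RelaxedPositive} and \ref{itm:RelaxedRowOrdered}, so $\cB \subseteq \cR_n$. Theorem~\ref{thm:RelaxedOrdering} then turns copositivity into SPN-membership, and we can do better: a strictly copositive matrix $C$ satisfies $C - \epsilon E \in \Cop_n$ for small $\epsilon > 0$, since $x^\top C x$ has a positive minimum on the compact simplex. We therefore want $C - \epsilon E \in \SPN_n$, which would give $z_{\textup{SPN}} \ge z_{\textup{SPN}} + \epsilon$, a contradiction.

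\textbf{Main obstacle.} The matrix $C - \epsilon E$ must remain in $\cB$ (or at least in $\cR_n$) for small $\epsilon$, and subtracting $\epsilon$ from every entry preserves all orderings but may break sign conditions. Specifically:
\begin{itemize}
\item The nonpositivity conditions (1)--(2) on $A$ and $B$ are preserved, since entries only decrease.
\item Row-monotonicity of $B$ and membership $C \in \cM_n$ are preserved, since these are invariant under adding constants.
\item The only risk is the hypothesis in \ref{itm:RelaxedPositive}: a positive entry of $C - z_{\textup{SPN}}E$ that stays positive after the shift still sits inside the $C$-block and remains ordered.
\end{itemize}
So $C - \epsilon E \in \cB \subseteq \cR_n$ for small $\epsilon$. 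Theorem~\ref{thm:RelaxedOrdering} then gives $C - \epsilon E \in \SPN_n$, that is, $Q - (z_{\textup{SPN}}+\epsilon)E \in \SPN_n$, contradicting the maximality of $z_{\textup{SPN}}$. I expect the only delicate point to be this verification that the shifted matrix stays in the block class. If it failed, I would instead check membership in $\cR_n$ directly, noting that $\idx$ can only increase under a downward shift.
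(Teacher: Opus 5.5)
Your proof is correct and uses the same key observation as the paper: subtracting a nonnegative multiple of $E$ keeps a matrix in $\cB$, since nonpositive blocks stay nonpositive and orderings are unchanged, so Corollary~\ref{ex:BlockSign_SPN} together with Theorem~\ref{thm:RelaxedOrdering} turns copositivity into SPN membership. The paper applies this directly to $Q - z_{\textup{COP}}E$, which is copositive because $\Cop_n$ is closed; your contradiction argument via strict copositivity and an $\epsilon$-shift is an unnecessary detour, because for your chosen $\lambda$ the matrix $Q-\lambda E$ is already a copositive member of $\cB$ and hence SPN.
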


\begin{proof}
If $z_{\textup{COP}}$ is the optimal value of the copositive program, then $z_{\textup{COP}} \geq z_{\textup{SPN}}$. In particular, we have that 
$Q-z_{\textup{COP}}E \leq Q-z_{\textup{SPN}}E.$
So, blocks of nonpositive entries in $Q-z_{\textup{SPN}}E$ are also nonpositive in $Q-z_{\textup{COP}}E$. Since subtracting multiples of $E$ does not change the ordering of entries, it follows that $Q-z_{\textup{COP}}E \in \cB$. By Remark \ref{ex:BlockSign_SPN}, this implies that $Q-z_{\textup{COP}}E \in \SPN_n$ and therefore $z_{\textup{COP}} = z_{\textup{SPN}} = z^*$. 
\end{proof}

Finally, we consider the case of optimization over the unit sphere. 

\begin{proposition}
Consider the problem 

\[z^* = \min \left\{ x^\top Q x \;\middle\vert\; 
\|x\| =1, x\geq 0 \right\}.\]

If $\cA$ is a class of matrices satisfying Conditions \ref{itm:GenTheoremUniformRow},  \ref{itm:GenTheoremDeleteNonneg}, and \ref{itm:GenTheoremSchur} of Theorem~\ref{thm:MainTheoremGeneral} and $Q \in \cA$, then $z^*$ can be computed by the following relaxation:

\[z^* = \max \left\{ \lambda \;\vert\; Q-\lambda I \in \SPN_n \right\}.\]
\end{proposition}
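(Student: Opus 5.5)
Since $\|x\|^2=x^\top I x$, the value $z^*$ is the largest $\lambda$ with $x^\top(Q-\lambda I)x\ge 0$ for all $x\ge 0$, i.e.
\[z^*=\max\{\lambda \;\vert\; Q-\lambda I\in \Cop_n\}.\]
This is the analogue of \eqref{eq:non-sepStQP-COP} with $E$ replaced by $I$. I will first justify this reformulation. If $x\ge0$ and $x\ne 0$, then $x/\|x\|$ is feasible, so $x^\top(Q-z^*I)x\ge 0$ by homogeneity; conversely, if $Q-\lambda I$ is copositive, every feasible $x$ has $x^\top Qx\ge\lambda$. The feasible set is compact, so $z^*$ is attained and the maximum exists. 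Since $\SPN_n\subseteq\Cop_n$, the SPN program has value at most $z^*$. The whole task is therefore the reverse inequality, namely that $Q-z^*I\in\SPN_n$.

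The key step is the analogue of Observation \ref{obs:mainObservation}: the classes satisfying Conditions \ref{itm:GenTheoremUniformRow}, \ref{itm:GenTheoremDeleteNonneg}, \ref{itm:GenTheoremSchur} should be invariant under shifts by $\lambda I$, so that $Q-z^*I\in\cA$. Then Theorem \ref{thm:MainTheoremGeneral} applied to the copositive matrix $Q-z^*I$ gives $Q-z^*I\in\SPN_n$, and the two values agree.

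The main obstacle is that invariance under diagonal shifts is not part of the stated hypotheses. Conditions \ref{itm:GenTheoremUniformRow} and \ref{itm:GenTheoremDeleteNonneg} refer only to off-diagonal signs and to principal submatrices, so they are unaffected by a shift. Condition \ref{itm:GenTheoremSchur} is different: the Schur complement $A/a_{i,i}$ depends on $a_{i,i}$, so it can change. My plan is to avoid changing the class. I will rerun the inductive proof of Theorem \ref{thm:MainTheoremGeneral} directly on $A=Q-z^*I$, using only the off-diagonal sign information of $Q$ together with copositivity of $A$. At each stage there is a row $i$ whose off-diagonal entries are all nonnegative, or all nonpositive with at least one negative. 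These are off-diagonal properties, so they transfer from $Q$ to $A$.

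\begin{itemize}
\item \emph{Nonnegative row.} Delete row $i$ and use the first part of Lemma \ref{lem:row_signs}. The submatrix $A(i)$ equals $Q(i)-z^*I$ with $Q(i)\in\cA_{n-1}$, so the induction continues on the shifted class.
\item \emph{Nonpositive row.} Copositivity and Lemma \ref{lem:diag_elts} give $a_{i,i}>0$. The second part of Lemma \ref{lem:row_signs} reduces the problem to $A/a_{i,i}$. Here I need $A/a_{i,i}$ to again be of the form (member of $\cA$) minus a multiple of $I$, or at least to keep the off-diagonal data that Conditions \ref{itm:GenTheoremUniformRow}--\ref{itm:GenTheoremSchur} use.
\end{itemize}

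For the concrete classes of interest ($\cM_n$, $\cR_n$, nonpositive off-diagonal, nonnegative), membership depends only on off-diagonal entries. In the Schur complement computation in the proof of Theorem \ref{thm:Mn_SPN}, the orderings are preserved for every positive pivot $a_{1,1}$, so these classes are closed under $A\mapsto A+\lambda I$. I would therefore state the argument cleanly under the natural reading that $\cA$ is closed under diagonal shifts (equivalently, is defined by off-diagonal conditions). This is exactly what Observation \ref{obs:mainObservation} asks for, with $E$ replaced by $I$, and with it the proof closes immediately.
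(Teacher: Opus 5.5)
Your route is the paper's: rewrite $z^*$ as $\max\{\lambda \mid Q-\lambda I\in\Cop_n\}$, put $Q-z^*I$ into $\cA$, and invoke Theorem~\ref{thm:MainTheoremGeneral}. The paper handles the middle step in one sentence. It says Conditions \ref{itm:GenTheoremUniformRow}--\ref{itm:GenTheoremSchur} impose no restriction on diagonals, hence $Q\in\cA$ if and only if $Q+\lambda I\in\cA$. You correctly saw that this does not follow, because the Schur complement in \ref{itm:GenTheoremSchur} depends on the pivot.

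The defect is real: without an extra hypothesis the proposition is false. Let $A$ be the $6\times 6$ matrix of Example~\ref{ex:SignPatternsSame} and $Q=A+I$. Put $\cA_6=\{Q\}$, let $\cA_n$ be the entrywise nonnegative $n\times n$ matrices for $n\le 5$, and set $\cA_n=\emptyset$ for $n\ge 7$. Row $1$ of $Q$ has off-diagonal entries $-\sqrt2$, and every row of $Q$ contains a $-\sqrt2$. Moreover
\[
Q/q_{1,1}=A(1)+I-E_5
\]
has diagonal entries $3$ and off-diagonal entries in $\{0,2\}$, so it lies in $\cA_5$. Hence $\cA$ satisfies all three conditions and $Q\in\cA$. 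But $y=(2\sqrt2,1,1,0,0,0)^\top\ge 0$ has $y^\top Ay=0$, so $z^*=1$. On the other hand $Q-I=A\notin\SPN_6$, because $A/a_{1,1}$ is the Horn matrix (Lemma~\ref{lem:row_signs}). Since $\SPN_6$ is closed, $\max\{\lambda\mid Q-\lambda I\in\SPN_6\}<z^*$. Taking $\cA$ to be the largest class satisfying the conditions (the union of all such classes) does not rescue the statement. That class contains this $Q$, so by the same example it cannot be closed under shifts.

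Consequently the plan in your two bullets cannot be completed in general, and you should drop it. That plan reruns the induction on $Q-z^*I$ using only information about $Q$, and the example shows it breaks at the Schur step. The version you end with assumes $\cA_n+\lambda I=\cA_n$ for all $\lambda\in\bbR$. That is the correct statement, and your argument proves it completely; it is exactly the analogue of Observation~\ref{obs:mainObservation} with $E$ replaced by $I$.

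Two small corrections. For $\cM$ this hypothesis is immediate from the definition, since membership depends only on off-diagonal entries; you do not need the Schur complement computation from the proof of Theorem~\ref{thm:Mn_SPN} for it. Also, closure under scalar shifts is weaker than being ``defined by off-diagonal conditions'', not equivalent to it, but it is all your proof uses.
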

\begin{proof}
Conditions \ref{itm:GenTheoremUniformRow},  \ref{itm:GenTheoremDeleteNonneg}, and \ref{itm:GenTheoremSchur} of Theorem \ref{thm:MainTheoremGeneral} do not impose restrictions on the diagonals of matrices. So, if $\cA$ satisfies all three conditions, then $Q \in \cA$ if and only if $Q+\lambda I \in \cA$ for all $\lambda \in \bbR$.
\end{proof}

\section{Sign Pattern Graphs}\label{sec:SignGraphs}

Rescaling by a positive diagonal matrix cannot change the sign of entries. So, it is natural to ask if the existence of a diagonal rescaling and permutation taking a copositive matrix $A$ to a matrix in $\mathcal{M}_n$ depends only on the pattern of signs in the entries of $A$. Proposition \ref{prop:SignGraphThreshold} below provides a necessary condition on the sign patterns of entries of matrices which are elements of $G_n\cM_n$.

To state our result, we need the following notation. Given a matrix $A \in \cS^n$, the graph of $A$ is the graph $\cG(A) = ([n],\{(i,j)\; \vert \; i\neq j, a_{i,j} \neq 0\})$. There are two subgraphs corresponding to the signs of nonzero entries in $A$:
\[\cG_{-}(A) = ([n],\{(i,j) \; \vert \; i\neq j, a_{i,j} < 0\}), \quad \cG_{+}(A) = ([n],\{(i,j) \; \vert \; i\neq j, a_{i,j} > 0\}).\]
That is, $\cG_{-}(A)$ and $\cG_{+}(A)$ both have the vertex set $\{1,2,\ldots, n\}$ and $(i,j)$ is an edge in $\cG_{-}(A)$ if $a_{i,j}<0$ and an edge in $\cG_{+}(A)$ if $a_{i,j} > 0$. 

A simple graph $G = (V,E)$ is a threshold graph \cite{chvatal_aggregation_1977} if it can be constructed by iteratively adding vertices which are either isolated or connected to every other vertex. That is, if $G = (V,E)$ is threshold, then one constructs 

\[\tilde{G} = \left(V\cup \{u\}, E \cup \{(v,u)\; \vert \; v \in V\}\right) \quad \text{ or } \quad \widehat{G} = (V\cup \{u\}, E).\]
The forbidden subgraphs of a threshold graph are the path on 4 vertices, the 4 cycle, and two paths on two vertices \cite{chvatal_aggregation_1977}. A graph is threshold if and only if its complement is threshold. Note that this implies that threshold graphs are perfect, as they cannot contain an odd cycle of length at least 5 as an induced subgraph. 

\begin{proposition}\label{prop:SignGraphThreshold}
Let $A \in G_n\cM_n$. Then, $\cG_{-}(A)$ and $\cG_{+}(A)$ are both threshold graphs. 
\end{proposition}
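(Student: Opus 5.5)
First reduce to the case $A\in\cM_n$. The action of $g=PD\in G_n$ multiplies each off-diagonal entry by the positive number $d_id_j$ and relabels the vertices by a permutation. Hence $\cG_{-}(g\cdot X)$ and $\cG_{+}(g\cdot X)$ are isomorphic to $\cG_{-}(X)$ and $\cG_{+}(X)$. Since being threshold is invariant under graph isomorphism, it suffices to prove the claim for $A\in\cM_n$.

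For $A\in\cM_n$, I will show by induction on $n$ that $\cG_{+}(A)$ is threshold; the case $n=1$ is trivial. The argument uses the variant of Lemma \ref{lem:Uniform_Row} recorded in Remark \ref{rem:Uniform_Row_Alternate}: either the last row has all off-diagonal entries strictly positive, or the first row has all off-diagonal entries nonpositive.

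\begin{itemize}
\item In the first case, vertex $n$ is adjacent to every other vertex in $\cG_{+}(A)$.
\item In the second case, vertex $1$ is isolated in $\cG_{+}(A)$.
\end{itemize}

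In either case, deleting that vertex corresponds to passing to $A(n)$ or $A(1)$, which lies in $\cM_{n-1}$ as noted in the proof of Theorem \ref{thm:Mn_SPN}. Its positive sign graph is exactly the induced subgraph $\cG_{+}(A)-\{n\}$ or $\cG_{+}(A)-\{1\}$, which is threshold by induction. Adding back a dominating or isolated vertex preserves the threshold property by definition, so $\cG_{+}(A)$ is threshold.

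For $\cG_{-}(A)$, apply the same induction using Lemma \ref{lem:Uniform_Row} itself: either the last row is nonnegative off the diagonal, or the first row is strictly negative off the diagonal.

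\begin{itemize}
\item In the first case, vertex $n$ is isolated in $\cG_{-}(A)$.
\item In the second case, vertex $1$ is adjacent to all other vertices in $\cG_{-}(A)$.
\end{itemize}

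Deleting that vertex again leaves a matrix in $\cM_{n-1}$, and induction applies. Alternatively, one could note that $-A$ with rows and columns reversed lies in $\cM_n$, which swaps the roles of the two graphs.

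I expect no real obstacle. The only care needed is choosing the correct strict or nonstrict dichotomy for each sign: Remark \ref{rem:Uniform_Row_Alternate} for $\cG_{+}$ and Lemma \ref{lem:Uniform_Row} for $\cG_{-}$. This ensures that the vertex peeled off is genuinely dominating or genuinely isolated in the relevant sign graph, not merely adjacent to nonzero entries.
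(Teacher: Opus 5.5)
Your proposal is correct and is essentially the paper's proof. Both reduce via the $G_n$-action to $\tilde A\in\cM_n$ and then induct by peeling off a dominating or isolated vertex, using $A(1),A(n)\in\cM_{n-1}$, Lemma \ref{lem:Uniform_Row} for $\cG_{-}$ and Remark \ref{rem:Uniform_Row_Alternate} for $\cG_{+}$. The paper's extra complement argument for matrices with all nonzero entries is unnecessary given your uniform treatment, and your remark that negating and reversing the indices preserves $\cM_n$ (swapping $\cG_{+}$ and $\cG_{-}$) is a valid shortcut the paper does not use.
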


\begin{proof}
 The base case $n = 1$ is trivial. 
 
Suppose that $n > 1$ and that for all $k \leq n-1$, we have that matrices $B \in G_k\cM_k$ are such that $\cG_{-}(B)$ and $\cG_{+}(B)$ are both threshold.
Let $(PD) \in G_n$ be the product of a permutation matrix and a diagonal matrix with positive entries such that $\tilde{A}:= (PD)^\top A (PD)$ is an element of $\cM_n$. We first show that $\cG_{-}(\tilde{A})$ is threshold.  By Lemma \ref{lem:Uniform_Row}, either the first row of $\tilde{A}$ has all negative off-diagonal entries or the last row of $\tilde{A}$ has all nonnegative off-diagonal entries. In the first case, we have that $\tilde{A}(1) \in \cM_{n-1}$ and $\cG_{-}(\tilde{A})$ is obtained from $\cG_{-}(\tilde{A}(1))$ by adding a vertex connected to all vertices of $\cG_{-}(\tilde{A}(1))$. In the second case, $\tilde{A}(n) \in \cM_{n-1}$ and $\cG_{-}(\tilde{A})$ is obtained from $\cG_{-}(\tilde{A}(n))$ by adding an isolated vertex. By the inductive hypothesis, this implies that $\cG_{-}(\tilde{A})$ is threshold in both cases. 

We now show that $\cG_{+}(\tilde{A})$ is threshold. This is immediate if $A$ has all nonzero entries, since in this case, $\cG_{+}(\tilde{A})$ is the complement of $\cG_{-}(\tilde{A})$ and the complement of a threshold graph is threshold. Otherwise, by Remark \ref{rem:Uniform_Row_Alternate}, either the first row of $\tilde{A}$ has all nonpositive off-diagonal entries or the last row of $\tilde{A}$ has all positive entries. Arguing as above, this implies that $\cG_{+}(\tilde{A})$ is threshold as well. 

Since $\cG_{\pm}(\tilde{A})$ and $\cG_{\pm}(A)$ differ only in labeling of the vertices, the claim follows. 
\end{proof}

The condition on sign graphs in Proposition \ref{prop:SignGraphThreshold} is not sufficient, as demonstrated below in Example \ref{ex:SignPatternsSame}.

\begin{example}\label{ex:SignPatternsSame}
 Consider the following matrices $A$ and $B$:

\[A = \begin{bmatrix} 1 & -\sqrt{2} & -\sqrt{2} & -\sqrt{2} & -\sqrt{2} & -\sqrt{2}\\ -\sqrt{2} & 3 & 1 & 3 & 3 & 1\\
-\sqrt{2} & 1 & 3 & 1 & 3 & 3\\
-\sqrt{2} & 3 & 1 & 3 & 1 & 3\\
-\sqrt{2} & 3 & 3 & 1 & 3 & 1\\
-\sqrt{2} & 1 & 3 & 3 & 1& 3\end{bmatrix} \quad \text{ and } B = \begin{bmatrix} 1 & -\sqrt{2} & -\sqrt{2} & -\sqrt{2}& -\sqrt{2} & -\sqrt{2}\\ -\sqrt{2} & 3 & 3 & 3 & 3 & 3\\
-\sqrt{2} & 3 & 3 & 3 & 3 & 3\\
-\sqrt{2} & 3 & 3 & 3 & 3 & 3\\
-\sqrt{2} & 3 & 3 & 3 & 3 & 3\\
-\sqrt{2} & 3 & 3 & 3 & 3 & 3\end{bmatrix}.\]

The matrices $A$ and $B$ both have the same sign pattern in their entries and $B \in \mathcal{M}_6$. The sign pattern graphs of $A$ and $B$ are displayed in Figure \ref{fig:SignPatternEx} and are both threshold.

\begin{figure}
\begin{subfigure}[b]{0.45\textwidth}
\centering
\begin{tikzpicture}
    \node[circle, fill, inner sep= 2pt] at (1,0) {};
    \node[circle, fill, inner sep = 2pt] at (2,1) {};
    \node[circle, fill, inner sep = 2pt] at (2,2) {};
    \node[circle, fill, inner sep = 2pt] at (1,3) {};
    \node[circle, fill, inner sep = 2pt] at (0,2) {};
    \node[circle, fill, inner sep = 2pt] at (0,1) {};

    \draw (1,0)--(2,1);
    \draw (1,0)--(2,2);
    \draw (1,0)--(1,3);
    \draw (1,0)--(0,2);
    \draw (1,0)--(0,1);
\end{tikzpicture}
\caption{$\cG_{-}(A) = \cG_{-}(B)$}
\end{subfigure}
\begin{subfigure}[b]{0.45\textwidth}
\centering
    \begin{tikzpicture}
    \node[circle, fill, inner sep= 2pt] at (1,0) {};
    \node[circle, fill, inner sep = 2pt] at (2,1) {};
    \node[circle, fill, inner sep = 2pt] at (2,2) {};
    \node[circle, fill, inner sep = 2pt] at (1,3) {};
    \node[circle, fill, inner sep = 2pt] at (0,2) {};
    \node[circle, fill, inner sep = 2pt] at (0,1) {};

    \draw (2,1)--(2,2);
    \draw (2,1)--(1,3);
    \draw (2,1)--(0,2);
    \draw (2,1)--(0,1);

    \draw (2,2)--(1,3);
    \draw (2,2)--(0,2);
    \draw (2,2)--(0,1);

    \draw (1,3)--(0,2);
    \draw (1,3)--(0,1);

    \draw (0,2)--(0,1);
\end{tikzpicture}
\caption{$\cG_+(A) = \cG_+(B)$}
\end{subfigure}
\caption{Sign pattern graphs for the matrices $A$ and $B$ in Example \ref{ex:SignPatternsSame}. Both graphs are threshold, but $A \not \in \cM_6$.}\label{fig:SignPatternEx}
\end{figure}

However, the Schur complement of $A$ with respect to the first row is 

\[A/a_{1,1} = \begin{bmatrix} 3 & 1 & 3 & 3 & 1\\
 1 & 3 & 1 & 3 & 3\\
  3 & 1 & 3 & 1 & 3\\
  3 & 3 & 1 & 3 & 1\\
  1 & 3 & 3 & 1& 3\end{bmatrix} - 2E_5 = \begin{bmatrix}1 & -1 & 1 & 1 & -1 \\ -1 & 1 & -1 & 1 & 1\\ 1 & -1 & 1 & -1 & 1\\ 1 & 1 & -1 & 1 & -1\\ -1 & 1 & 1 & -1 & 1 \end{bmatrix},\]
 where $E_5$ is the $5\times 5$ all ones matrix. Since this is the Horn matrix, which is in $\Cop_5 \setminus \SPN_5$, this implies that $A \in \Cop_6 \setminus \SPN_6$ by Lemma \ref{lem:Uniform_Row}. So, no diagonal scaling of $A$ can be an element of $\mathcal{M}_6$. 
\end{example}

\subsection{Comparison with Known Results for Perfect Graphs}

In \cite{gokmen2022standard}, the authors prove that the DNN relaxation of a standard quadratic programming problem is tight for a class of problems related to perfect graphs. 

\begin{theorem}[{\cite[Proposition 6]{gokmen2022standard}}]\label{thm:PerfectGraph}
For a perfect graph $G = ([n],E)$ on $n$ vertices and a strictly positive weight vector $w \in \bbR^n$, let 

\[\begin{split}
\cA(G,w) = \{B = (b_{ij})\in \cS^n \; \vert \;& b_{i,i} = w_k \text{ for all } k \in [n], \ b_{i,j} = 0 \text{ for all } (i,j) \not \in E, \text{ and } \\ & 2b_{i,j} \geq w_i + w_j \text{ for all } (i,j) \in E\}.
\end{split}\]
If $Q \in \bigcup_{G \text{ perfect}}\bigcup_{w \in \bbR^n_{++}} \cA(G,w) + \spann E$, then $z^* = z_{SPN}$.
\end{theorem}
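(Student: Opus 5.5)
This is a result cited from \cite[Proposition 6]{gokmen2022standard}, but I would re-derive it with the tools of this paper. The plan has three steps: reduce to a class invariant under adding multiples of $E$, identify the optimal value through the Motzkin--Straus theorem and the theta function, and produce an explicit SPN certificate at that value.

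\emph{Reduction.} Since $\cA(G,w) + \spann E$ is invariant under adding multiples of $E$, Observation~\ref{obs:mainObservation} lets us take $Q = B \in \cA(G,w)$. The goal is then to show that $B - z^*E \in \SPN_n$ when $z^*$ is the copositive value.

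\emph{Computing $z^*$.} Diagonally rescale: let $D=\diag(w)^{-1/2}$ and $y = D^{-1}x$. A natural guess is that the minimizers of $x^\top B x$ over $\Delta^n$ are supported on cliques of $G$. On a clique $C$, every off-diagonal entry satisfies $b_{ij}\ge (w_i+w_j)/2$. Comparing with the matrix $\tfrac12(w e^\top + e w^\top)$ restricted to $C$ gives
\[
x^\top B x \;\ge\; \Big(\sum_{i} w_i x_i\Big)\Big(\sum_i x_i\Big) \;=\; \sum_i w_i x_i ,
\]
which is minimized at a vertex. On a non-edge, $b_{ij}=0$ is the smallest entry available, so off-diagonal mass there is beneficial. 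Using the weighted Motzkin--Straus theorem, I expect $z^* = 1/\omega_{w^{-1}}(G)$, where $\omega_{w^{-1}}(G)$ is the maximum clique weight of $G$ with weights $1/w_i$. More precisely, the minimum over $\Delta^n$ of $\sum_i w_i x_i^2$ subject to $x_ix_j = 0$ on non-edges of $G$ equals the reciprocal of the max weighted clique, and increasing edge entries up to $b_{ij}$ only helps.

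\emph{SPN certificate.} I would write
\[
B - z^*E \;=\; \Big(\diag(w) + N - z^*E\Big) + \big(B - \diag(w) - N\big),
\]
where $N$ is supported on the edges of $G$ with entries $\tfrac12(w_i+w_j)$, so that the second summand is entrywise nonnegative. Choose a PSD matrix $Y$ certifying that the Lovász theta function of the complement graph $\bar G$ with weights $1/w$ equals the clique number. This equality holds precisely because $G$ is perfect, by Lovász's theorem $\vartheta(\bar G, c) = \omega(G,c)$ for perfect $G$. The dual theta SDP then gives a PSD matrix of the form $\diag(w) + T - z^*E$ with $T$ supported on the edges of $G$. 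The remaining task is to dominate $T$ by the edge entries of $B$, i.e.\ to show $T\le B-\diag(w)$ entrywise; the excess then goes into the nonnegative part.

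\emph{Main obstacle.} The hard step is this domination: showing the theta dual certificate can be chosen with edge entries at most $b_{ij}$, and correctly tracking the rescaling by $w$. I would first try the normalization $\tilde B = DBD$, which has unit diagonal and off-diagonal entries $\ge \tfrac12\big(\sqrt{w_i/w_j}+\sqrt{w_j/w_i}\big)\ge 1$ on edges. In that form the standard theta formulation (unit diagonal, zeros on non-edges, free entries on edges) applies directly. Any entry exceeding what the certificate requires is absorbed into the nonnegative part, and strong duality of the theta SDP closes the gap.
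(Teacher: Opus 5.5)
The paper does not prove this statement; it only quotes it from \cite{gokmen2022standard}. So I am judging your outline on its own terms. Your strategy can be made to work: reduce to $Q=B\in\cA(G,w)$, take the PSD part from a weighted Lovász theta dual certificate, and put the leftover edge entries into the nonnegative part. As written, though, it has two genuine problems.

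\emph{$G$ and $\bar G$ are swapped.} With $\cA(G,w)$ as defined, $B$ vanishes on the non-edges of $G$ and is large on its edges. Your own estimate shows that on a clique of $G$ one only gets $x^\top Bx\ge\sum_i w_ix_i\ge\min_i w_i$. So minimizers live on \emph{stable} sets of $G$. Set $c=(1/w_1,\dots,1/w_n)$. Then $z^*=1/\alpha(G,c)$, where $\alpha(G,c)$ is the maximum $c$-weight of a stable set, not $1/\omega(G,c)$: for a single edge with $w=(1,1)$, $z^*=1$, not $\tfrac12$. The identity you need is therefore $\vartheta(G,c)=\alpha(G,c)$, which holds because $G$ is perfect. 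It is the dual of $\vartheta(G,\cdot)$ whose free multipliers sit on the edges of $G$. The dual of $\vartheta(\bar G,\cdot)$ that you invoke has its free entries on the edges of $\bar G$, i.e.\ exactly where $B$ is zero. Those entries cannot be absorbed into a nonnegative remainder, and a $T$ supported on the edges of $G$ does not come out of that SDP. Since $G$ is perfect iff $\bar G$ is, the theorem itself is unaffected, but a proof must fix one convention. The sentence after the statement in the paper makes the same identification $z^*=1/\omega(G,1/w)$.

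\emph{The domination step is left unproved.} You call this the main obstacle, but ``strong duality closes the gap'' says nothing about $T\le B-\diag(w)$. Your normalization by $\diag(w)^{-1/2}$ also does not reduce to the unweighted theta, because it turns $E$ into $\sqrt{c}\sqrt{c}^{\top}$, where $\sqrt{c}=(\sqrt{c_1},\dots,\sqrt{c_n})$. The missing observation is short, and it is exactly what the hypothesis $2b_{ij}\ge w_i+w_j$ is for.

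Take an optimal solution of $\vartheta(G,c)=\min\{t:\ tI+Y-\sqrt{c}\sqrt{c}^{\top}\succeq 0,\ Y \text{ supported on the edges of } G\}$; it is attained by Slater. Conjugate by $\diag(\sqrt{w_1},\dots,\sqrt{w_n})$ and divide by $t=\alpha(G,c)$. This gives $P=\diag(w)+T-zE\succeq 0$ with $z=1/\alpha(G,c)$ and $T$ supported on the edges of $G$.

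For an edge $ij$, the $2\times 2$ principal minor of $P$ gives $(T_{ij}-z)^2\le (w_i-z)(w_j-z)$, with both factors nonnegative. By AM--GM, $T_{ij}\le z+\sqrt{(w_i-z)(w_j-z)}\le \tfrac12(w_i+w_j)\le b_{ij}$. Hence $B-zE=P+(B-\diag(w)-T)\in\SPN_n$, so $z_{\textup{SPN}}\ge z$.

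You also do not need weighted Motzkin--Straus. Put $x_i=c_i/\alpha(G,c)$ on a maximum-weight stable set; this gives $x^\top Bx=z$. Therefore $z_{\textup{SPN}}\le z^*\le z\le z_{\textup{SPN}}$.
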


The set of matrices $\cA(G,w)$ has relevance in graph theory. If $\omega(G,\frac{1}{w})$ is the maximum weighted clique in the graph $G$ with vertex weights $\frac{1}{w_i}$ and $Q \in \cA(G,w)$, then the optimal value of the standard quadratic program with objective function $Q$ satisfies $z^* = z_{\textup{COP}} = \frac{1}{\omega(G,\frac{1}{w})}$ \cite{gibbons1997continuous}. It therefore follows that $z_{\textup{COP}} >0$. On the other hand, since copositive matrices have nonnegative diagonals, it must be the case that $z_{\textup{COP}} \leq \min_{i}w_i$. In particular, $Q-z_{\textup{COP}}E$ has negative entries when $(i,j) \not \in E$ and positive entries when $(i,j) \in E$.

From this perspective, Theorem \ref{thm:PerfectGraph} gives a condition under which copositivity implies the existence of an SPN decomposition when the sign graphs are perfect. This condition relies on the inequality $2Q_{i,j} \geq Q_{i,i} + Q_{j,j}$, which relates the diagonal and off-diagonal entries of the objective function matrix. 

Our results, on the other hand, provide conditions under which copositivity implies the existence of an SPN decomposition which apply when the sign graphs are threshold. This is a strict subset of perfect graphs. However, the conditions on matrix entries are less restrictive, as they do not involve the diagonal of the matrix.

\printbibliography

\end{document}